\def\eqref#1{\textcolor{blue}{(\ref{#1})}}
\theoremstyle{definition}
\newtheorem{theorem}{Theorem}[section]
\newtheorem{proposition}[theorem]{Proposition}
\newtheorem{corollary}[theorem]{Corollary}
\newtheorem{lemma}[theorem]{Lemma}
\newtheorem{definition}[theorem]{Definition}
\newtheorem{remark}[theorem]{Remark}
\newtheorem{question}{Question}
\renewcommand{\Re}{\operatorname{Re}}
\def\({\left(}
\def\){\right)}
\def\<{\left\langle}
\def\>{\right\rangle}
\newcommand{\norm}[1]{\lVert #1\rVert}
\numberwithin{equation}{section} %
\def\rnum#1{\expandafter{\romannumeral #1}} 
\def\Rnum#1{\uppercase\expandafter{\romannumeral #1}}
\subjclass[2010]{35Q55}
\title[mass-subcritical nonlinear Schr\"odinger system]{A sharp scattering threshold level for mass-subcritical nonlinear Schr\"odinger system}
\author[M. Hamano and S. Masaki]{Masaru Hamano and Satoshi Masaki}
\address[Masaru Hamano]{Department of Mathematics, Graduate School of Science and Engineering Saitama University, Shimo-Okubo 255, Sakura-ku, Saitama-shi, Saitama 338-8570, Japan}
\email{m.hamano.733@ms.saitama-u.ac.jp}
\address[Satoshi Masaki]{Department of Systems Innovation, Graduate School of Engineering Sciences, Toyonaka, Osaka, Japan}
\email{masaki@sigmath.es.osaka-u.ac.jp}
\begin{document}

\maketitle

\begin{abstract}
 In this paper, we consider the quadratic nonlinear Schr\"odinger system in three space dimensions.
Our aim is to obtain sharp scattering criteria.
Because of the mass-subcritical nature, it is difficult to do so in terms of conserved quantities.
The corresponding single equation is studied by the second author and a sharp scattering criteria is established by 
introducing a distance from a trivial scattering solution, the zero solution.
By the structure of the nonlinearity we are dealing with, the system admits a scattering solution which is a pair of zero solution and a linear solution.
Taking this fact into account, we introduce a new optimizing quantity and give a sharp scattering criterion in terms of it.
\end{abstract}



\section{Introduction}

We consider the following quadratic Schr\"odinger system in three space dimensions:
\begin{equation}
\tag{NLS}\label{NLS}
\begin{cases}
\hspace{-0.4cm}&\displaystyle{i\partial_tu+\Delta u=-2v\bar{u}\qquad(t,x)\in\mathbb{R}\times\mathbb{R}^3,}\\
\hspace{-0.4cm}&\displaystyle{i\partial_tv+\frac{1}{2}\Delta v=-u^2\qquad(t,x)\in\mathbb{R}\times\mathbb{R}^3,}
\end{cases}
\end{equation}
where $i=\sqrt{-1}$, $u,v:\mathbb{R}\times\mathbb{R}^3\longrightarrow\mathbb{C}$ are unknown functions, $\Delta=\sum_{j=1}^3\frac{\partial^2}{\partial x_j^2}$, and $\overline{u}$ is the complex conjugate of $u$.
%
In this paper, we study \eqref{NLS} in a weighted space. For $s\in[0,\frac{3}{2})$, we define the homogeneous weighted space $\mathcal{F}\dot{H}^s(\mathbb{R}^3)$ by the norm
\begin{align*}
\|f\|_{\mathcal{F}\dot{H}^s(\mathbb{R}^3)}:=\|\mathcal{F}f\|_{\dot{H}^s(\mathbb{R}^3)}=\||x|^sf\|_{L_x^2(\mathbb{R}^3)}.
\end{align*}
Here, $\mathcal{F}$ denotes the Fourier transform on $\mathbb{R}^3$, that is,
\begin{align*}
\mathcal{F}f(\xi):=\widehat{f}(\xi):=(2\pi)^{-\frac{3}{2}}\int_{\mathbb{R}^3}e^{-ix\cdot\xi}f(x)dx.
\end{align*}
We consider the system \eqref{NLS} with the initial condition
\begin{align}
(u(0),v(0))=(u_0,v_0)\in\mathcal{F}\dot{H}^\frac{1}{2}\times\mathcal{F}\dot{H}^\frac{1}{2}.\label{003}
\end{align}

At least formally, the system \eqref{NLS} has the following two conserved quantities: One is \textit{mass}
\begin{equation}\label{D:mass}
	M[u,v] := \int_{\mathbb{R}^3} (|u(x)|^2 + 2|v(x)|^2 ) dx
\end{equation}
and the other is \textit{energy}
\begin{equation}\label{D:energy}
	E[u,v] := \int_{\mathbb{R}^3} \(|\nabla u(x)|^2 + \frac12 |\nabla v(x)|^2  - 2 \Re (u(x)^2\overline{v(x)}) \) dx.
\end{equation}

Let us make the notion of solution clear. 
We need a slight modification of the notion compared with $L^2$ or $H^1$ solutions because the Schr\"odinger flow is not unitary
in the homogeneous weighted space $\mathcal{F}\dot{H}^\frac{1}{2}$. 
The corresponding single equation is studied in this space
in \cite{Mas15} (see also \cite{GinOzaVel94,NakOza02} and references therein).

\begin{definition}[Solution]\label{D:sol}
Let $I\subset\mathbb{R}$ be a nonempty time interval.
We say that a pair of functions $(u,v) : I\times\mathbb{R}^3\rightarrow\mathbb{C}^2$ is a solution to \eqref{NLS} on $I$ if $(e^{-it\Delta}u(t),e^{-\frac{1}{2}it\Delta}v(t))\in(C(I;\mathcal{F}\dot{H}^\frac{1}{2}))^2$ and the Duhamel formula
\begin{equation*}
\begin{cases}
\hspace{-0.4cm}&\displaystyle{e^{-it\Delta}u(t)=e^{-i\tau\Delta}u(\tau)+2i\int_{\tau}^te^{-is\Delta}(v\overline{u})(s)ds},\\
\hspace{-0.4cm}&\displaystyle{e^{-\frac{1}{2}it\Delta}v(t)=e^{-\frac{1}{2}i\tau\Delta}v(\tau)+i\int_{\tau}^te^{-\frac{1}{2}is\Delta}(u^2)(s)ds}
\end{cases}
\end{equation*}
holds  in $\mathcal{F}\dot{H}^\frac{1}{2}$ for any $t,\tau\in I$, where $e^{it\Delta}:=\mathcal{F}^{-1}e^{-it|\xi|^2}\mathcal{F}$ is the Schr\"odinger group.
We express the maximal interval of existence of $(u,v)$ by $I_{\text{max}}=(T_{\text{min}},T_{\text{max}})$.
We say $(u,v)$ is forward time-global (resp.~backward time-global) if $T_{\text{max}}=\infty$ (resp.~if $T_{\text{min}}=-\infty$).
\end{definition}

This definition of solutions is not time-translation invariant.
That is, if $(u,v)$ is a solution to \eqref{NLS}, then $(u(\cdot+\tau),v(\cdot+\tau))$ is not necessarily a solution for $\tau\in\mathbb{R}$.
On the other hand, solutions to \eqref{NLS} remains invariant under the rescaling
\begin{align*}
(u(t,x),v(t,x))\mapsto (u_{[\lambda]}(t,x),v_{[\lambda]}(t,x)):=(\lambda^2u(\lambda^2t,\lambda x),\lambda^2v(\lambda^2t,\lambda x))
\end{align*}
for $\lambda>0$.
Corresponding transform of initial data is as follows:
\begin{align}
(\phi(x),\psi(x))\mapsto(\phi_{\{\lambda\}}(x),\psi_{\{\lambda\}}(x)):=(\lambda^2\phi(\lambda x),\lambda^2\psi(\lambda x))\label{002}
\end{align}
for $\lambda>0$.
The $\dot{H}_x^{-\frac{1}{2}}$-norm is invariant under the above scaling transformation.
In other words, the equation \eqref{NLS} with initial condition \eqref{003} is scaling critical.
In this sense our problem is mass-subcritical.
Remark that the equation \eqref{NLS} with initial condition \eqref{003} is also a critical problem, in the sense that the $\mathcal{F}\dot{H}^\frac{1}{2}$-norm is invariant under the scaling \eqref{002}.

To state the local well-posedness for \eqref{NLS}, we introduce the function spaces $\dot{X}_m^{s,r}(t)$, $W_1$, and $W_2$ defined by norms
\begin{align*}
	\|f\|_{\dot{X}_m^{s,r}(t)}:=\Bigl\|\Bigl(-\frac{t^2}{m^2}\Delta\Bigr)^\frac{s}{2}e^{-\frac{im|x|^2}{2t}}f\Bigr\|_{L_x^r},\ \ \ 
	\|f\|_{W_j}:=\|f\|_{L_t^{6,2}\dot{X}_{2^{{}^{j-2}}}^{\frac{1}{2},\frac{18}{7}}}.
\end{align*}
For a space $\dot{X}_m^{s,r}(t)$, we omit the second exponent when $r=2$, that is, $\dot{X}_m^s(t)=\dot{X}_m^{s,2}(t)$.
We discuss these function spaces in more detail in Subsection \ref{subsec:Function spaces} and Subsection \ref{ss:space}, below.
The following is our result on the local well-psedness.
More detailed version is given later as Theorem \ref{Th. Local well-posedness2}.

\begin{theorem}[Local well-posedness]\label{Th. Local well-posedness}
For any 
$(u_0,v_0)\in \mathcal{F}\dot{H}^\frac{1}{2}\times \mathcal{F}\dot{H}^\frac{1}{2}$
there exist an open interval $I \ni 0$ and a unique solution $(u,v)\in(C(I;\dot{X}_{1/2}^{1/2})\cap W_1(I)) \times (C(I;\dot{X}_1^{1/2})\cap W_2(I))$ to \eqref{NLS} with a initial condition $(u(0),v(0))=(u_0,v_0)$.
\end{theorem}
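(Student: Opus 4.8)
The plan is to run a standard contraction mapping argument, but adapted to the homogeneous weighted space $\mathcal{F}\dot H^{1/2}$ via the pseudo-conformal (Galilean-type) change of variables built into the norms $\dot X_m^{s,r}(t)$. The key observation is that conjugating the Schr\"odinger group by the lens transform turns the $\mathcal{F}\dot H^{1/2}$-regularity of the data into ordinary $\dot H^{1/2}$-Strichartz estimates for a modified equation, so that the spaces $W_1, W_2$ above play the role of the usual scaling-critical Strichartz norms. First I would record the relevant linear estimates: the homogeneous and inhomogeneous Strichartz-type estimates adapted to $\dot X_m^{s,r}(t)$ that control $\|e^{it\Delta}u_0\|_{W_1}$ (resp. the $v$-analogue with $m=1$) by $\|u_0\|_{\mathcal{F}\dot H^{1/2}}$, and control the Duhamel terms $\|\int_\tau^t e^{-is\Delta}F(s)\,ds\|_{C(I;\dot X_{1/2}^{1/2})\cap W_1(I)}$ by a suitable dual norm of $F$ on $I$. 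These are presumably collected in the referenced subsections, so I may cite them.

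Next I would set up the solution map $\Phi(u,v) = (\Phi_1(u,v), \Phi_2(u,v))$ on a ball in $X(I) := (C(I;\dot X_{1/2}^{1/2})\cap W_1(I)) \times (C(I;\dot X_1^{1/2})\cap W_2(I))$, where $\Phi_1$ and $\Phi_2$ are given by the two Duhamel formulas in Definition \ref{D:sol} with source terms $2i v\bar u$ and $i u^2$ respectively. The core nonlinear estimate is a bilinear bound of the schematic form $\|v\bar u\|_{N_1(I)} \lesssim \|u\|_{W_1(I)}\|v\|_{W_2(I)}$ and $\|u^2\|_{N_2(I)}\lesssim \|u\|_{W_1(I)}^2$, where $N_1, N_2$ are the dual Duhamel spaces; these follow from H\"older in the $L_t^{6,2}$ Lorentz norm in time together with Sobolev/Leibniz-type estimates for the fractional operator $(-\tfrac{t^2}{m^2}\Delta)^{1/4}$ in the $\dot X_m^{1/2,18/7}$ spaces (note $\tfrac{7}{18}+\tfrac{7}{18} = \tfrac{7}{9}$ and the time exponents $\tfrac16+\tfrac16$ combine correctly for the inhomogeneous estimate). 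A mild subtlety is that the two components evolve under Schr\"odinger groups with different dispersion coefficients ($\Delta$ versus $\tfrac12\Delta$), so the bilinear term $v\bar u$ pairs functions living in spaces with parameters $m=1$ and $m=1/2$; one checks that the scaling exponents still match because $2^{j-2}$ is precisely calibrated to this, and the relevant product estimate is insensitive to the value of $m$ up to constants.

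The mechanics are then routine: choose the interval $I=(-T,T)$ small enough (equivalently, exploit the fact that $\|e^{it\Delta}u_0\|_{W_1(I)} \to 0$ as $|I|\to 0$, which holds because the data is in $\mathcal{F}\dot H^{1/2}$ exactly at the critical regularity and the $L_t^{6,2}$ norm is absolutely continuous — alternatively one works on a ball of small radius and absorbs smallness from the Strichartz norm of the linear evolution) so that $\Phi$ maps a closed ball of $X(I)$ into itself and is a contraction; the fixed point is the desired solution, and uniqueness in the stated class follows from the same bilinear estimates applied to the difference of two solutions on a possibly shorter interval, then a continuity/connectedness argument to propagate it over all of $I$. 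Openness of $I$ comes from the fact that the constructed solution can be extended past any endpoint of a closed subinterval by re-solving from the endpoint data. I expect the main obstacle to be the nonlinear bilinear estimate in the $\dot X_m^{1/2,18/7}$ spaces — specifically, establishing a fractional Leibniz rule for the twisted derivative $(-\tfrac{t^2}{m^2}\Delta)^{1/4}$ acting on a product, uniformly in $t$ and $m$, and verifying that the Lorentz-space time integrability $L_t^{6,2}$ is stable under the relevant H\"older pairing; this is where one must be careful rather than merely bookkeeping exponents, though it should reduce to known fractional-calculus inequalities after undoing the spatial weight via the change of variables $e^{-im|x|^2/2t}$.
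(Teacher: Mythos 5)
Your proposal is correct, and the core ingredients (Strichartz estimates in the $\dot{X}_m^{s,r}$ scale, the bilinear bounds $\|v\overline{u}\|_{N_1}\lesssim\|u\|_{W_1}\|v\|_{W_2}$ and $\|u^2\|_{N_2}\lesssim\|u\|_{W_1}^2$, which the paper also quotes from the literature rather than reproving) are exactly the ones the paper uses. The route is organized differently, though. You contract directly in $(C_t\dot{X}_{1/2}^{1/2}\cap W_1)\times(C_t\dot{X}_1^{1/2}\cap W_2)$, extracting smallness from the linear evolution's $W_1\times W_2$ norm on a short interval (legitimate, since the second Lorentz exponent in $L_t^{6,2}$ is finite, so the restriction norm is absolutely continuous). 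The paper instead runs a two-step argument: it first performs the contraction in the weaker scale-invariant space $S\times S=L_t^{3/2,2}L_x^{9/2}\times L_t^{3/2,2}L_x^{9/2}$, using the Kato-type non-admissible inhomogeneous estimates (the last two inequalities of its nonlinear lemma), and then upgrades the resulting $S$-solution to the stated class by a persistence-of-regularity bootstrap, subdividing $I$ so that $c\|(u,v)\|_{S(I_j)\times S(I_j)}\le\frac12$ on each piece. The paper's detour through $S$ buys uniqueness and a well-posedness theory at the level of the derivative-free $S$-norm, which it exploits later (e.g., in the scattering criterion, where finiteness of any one of the $S$- or $W_j$-norms of either component is shown equivalent to scattering); your one-shot contraction is shorter and fully sufficient for the theorem as stated, including uniqueness in the stated class via the difference estimate plus the connectedness argument you describe.
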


Now, we turn to the large time behavior of solutions to \eqref{NLS}. 
There are various types of possible behavior. 
In this paper, we are interested in scattering solutions defined as follows:
\begin{definition}
We say that a solution $(u,v)$ scatters forward in time (resp. backward in time) if $(u,v)$ is forward time-global (resp. backward time-global) and there exists $(u_+,v_+)\in\mathcal{F}\dot{H}^\frac{1}{2}\times\mathcal{F}\dot{H}^\frac{1}{2}$ (resp.~$(u_-,v_-)\in\mathcal{F}\dot{H}^\frac{1}{2}\times\mathcal{F}\dot{H}^\frac{1}{2}$) such that
\begin{align}\label{E:definition of scattering}
	\lim_{t\rightarrow+\infty}\|(e^{-it\Delta}u(t),e^{-\frac{1}{2}it\Delta}v(t))-(u_+,v_+)\|_{\mathcal{F}\dot{H}^\frac{1}{2}\times\mathcal{F}\dot{H}^\frac{1}{2}}=0\hspace{0.28cm}&\\
	\left(\text{resp.~} \lim_{t\rightarrow-\infty}\|(e^{-it\Delta}u(t),e^{-\frac{1}{2}it\Delta}v(t))-(u_-,v_-)\|_{\mathcal{F}\dot{H}^\frac{1}{2}\times\mathcal{F}\dot{H}^\frac{1}{2}}=0\right)&.\notag
\end{align}
We say that a solution $(u,v)$ scatters when it scatters both forward and backward in time.
\end{definition}
One has several equivalent characterizations of the scattering.
For example, the forward-in-time scattering is equivalent
 to the boundedness of $\norm{u}_{W_1((\tau, T_{\max}))}$
or of $\norm{v}_{W_2((\tau, T_{\max}))}$ for some $\tau \in I_{\max}$.
See Proposition \ref{Scattering criterion} for the details.

\subsection{Criterion for scattering by conserved quantities}

We are interested in obtaining a sharp condition for scattering.
This subject is recently extensively studied based on a concentration compactness/rigidity type argument after Kenig and Merle \cite{KenMer06}.
As for the Schr\"odinger system \eqref{NLS}, the first author treated five dimensions \cite{Ham18} and Inui, Kishimoto, and Nishimura treated four dimensions \cite{InuKisNis18}.
In these results, a sharp condition for scattering is given in terms of conserved quantities.
For example, in the four dimensions, the equation is mass-critical and we have the following simple criterion in terms of the mass:
a solution scatters for both time direction if the mass of a solution is smaller than that of the ground state solution.
This is a natural extension of the single equation case \cite{Dod15,KilTaoVis09,KilVisZha08}.
In five dimensions, condition for blowup is also studied.

Remark that if a solution $(u,v)$ scatters then any scaled solution $(u_{[\lambda]},v_{[\lambda]})$ scatters also.
Consequently, any criterion for scattering is scaling invariant.\footnote{Suppose that we have a criteria ``if $(u,v)$ 
satisfies a condition $P$ then $(u,v)$ scatters." Then, it actually reads as ``if there exists $\lambda>0$ such that $(u_{[\lambda]},v_{[\lambda]})$ 
satisfies a condition $P$ then $(u,v)$ scatters." The latter criteria is scaling invariant in such a sense that the validity of its assumption is left invariant under the scaling.}
Hence, if we look for a criteria given in terms of some characteristic quantity of a solution, it is natural that the quantity is scaling invariant.
Recall that mass $M$ is scaling invariant in four dimensions, and the product of two quantities $ME$ is a scaling invariant in five dimensions.
In the previous results \cite{Ham18,InuKisNis18}, these quantities play a crucial role in criterion there.

However, in the three dimensional case, it would be difficult to give a criteria in terms of the mass and the energy.
They are not scaling invariant:
\[
	M[\phi_{\{\lambda\}},\psi_{\{\lambda\}}] = \lambda M[\phi,\psi], \quad
	E[\phi_{\{\lambda\}},\psi_{\{\lambda\}}] = \lambda^3 E[\phi,\psi]
\]
for $\lambda>0$. 
Furthermore, the both right hands sides has the positive power of $\lambda$.
It means that one can make the both magnitudes of $M$ and of $E$ small or large at the same time by scaling.
This is a feature of the mass-subcritical case.
Thus, we may not have a criteria similar to those in four or five dimensional cases, as one may not construct a scaling invariant quantity by a combination or product of (positive powers of) these quantities.

Hence, in the sequel, we look for a criteria which is not given in terms of the conserved quantities, as in \cite{Mas15,Mas17,Mas16,MasSeg18}.

\subsection{Trivial scattering set and minimization of non-scattering solutions}
It can be said that
the main purpose of this paper is to investigate transition phenomena between scattering solutions near the trivial scattering set and
other solutions.
For comparison, let us recall the single NLS equation with the gauge invariant quadratic nonlinearity:
\begin{align}\label{E:NLSsingle}
	i \partial_t w + \Delta w = |w|w, \qquad(t,x)\in\mathbb{R}\times\mathbb{R}^3
\end{align}
with $w(0)=w_0 \in \mathcal{F} \dot{H}^{1/2}$. Let $\mathcal{S}_{+,\mathrm{single}}$ be the set of initial data for which the corresponding solution scatters forward in time.
Since the zero solution is a scattering solution, we have
\begin{align*}
	\{ 0 \} \subset \mathcal{S}_{+,\mathrm{single}}.
\end{align*}
Then, it is natural to define a ``size" of a solution by the distance from the zero solution.
This fact leads us to a study of a quantity like
\begin{align}\label{E:singlelc}
	 \inf \left\{ \| w_0 \|_{\mathcal{F} \dot{H}^\frac12}  : w_0 \not\in \mathcal{S}_{+,\mathrm{single}}  \right\}.
\end{align}
It is known \cite{Mas15,Mas17} that this infimum value is strictly smaller than the size of the ground state solution for \eqref{E:NLSsingle}
and further that
there exists a minimizer to this quantity.

Let us go back to the system case.
We define $\mathcal{S}_{+}$ as the set of initial data $(u_0,v_0) \in \mathcal{F} \dot{H}^{\frac12} \times \mathcal{F} \dot{H}^{\frac12}$
for which the corresponding solution scatters forward in time.
A straightforward generalization of the quantity \eqref{E:singlelc} is
\begin{align}\label{E:systemlc}
	 \inf \left\{ (\| u_0 \|_{\mathcal{F} \dot{H}^\frac12}^2 + \alpha \| v_0 \|_{\mathcal{F} \dot{H}^\frac12}^2)^{1/2}  : (u_0,v_0) \not\in \mathcal{S}_{+}  \right\}
\end{align}
with some constant $\alpha>0$.
However, there may not be a strong motivation to study this quantity other than the similarity to \eqref{E:singlelc}
because the quantity is not relevant to conserved quantities of \eqref{NLS}.
Hence, we want to find a different way of sizing which is based on a system nature.
To this end, we look at
the fact 
that not only the zero solution $(0,0)$ but also all solutions of the form $(0,e^{\frac12it\Delta}v_0)$ can be also regarded
as a trivial scattering solution for arbitrary $v_0 \in \mathcal{F} \dot{H}^{\frac12}$.
Taking this fact into account, one natural choice of the ``size" of an initial data would be the distance from the set $\{0\} \times \mathcal{F} \dot{H}^{\frac12}$.
This choice leads us to consider the following optimization problem:
\begin{equation}\label{D:ellv0}
\ell_{v_0}:=\inf \{\|u_0\|_{\mathcal{F}\dot{H}^\frac{1}{2}}: (u_0,v_0)\not\in \mathcal{S}_+ \} \in (0,\infty].
\end{equation}
By using a stability type argument, we will show that $\ell_{v_0}>0$ for any $v_0 \in \mathcal{F} \dot{H}^{\frac12}$ (see, Proposition \ref{Small data scattering}).
The following criteria is obvious by the definition of $\ell_{v_0}$.

\begin{proposition}[Sharp small data scattering]
\label{P:SSDS}
Let $(u_0,v_0)\in\mathcal{F}\dot{H}^\frac{1}{2}\times\mathcal{F}\dot{H}^\frac{1}{2}$ and
let $(u,v):I_{\text{max}}\times\mathbb{R}^3\longrightarrow\mathbb{C}^2$ be the solution to \eqref{NLS} with the initial condition \eqref{003}.
If $\|u_0\|_{\mathcal{F}\dot{H}^\frac{1}{2}}<\ell_{v_0}$ then $(u,v)$ scatters forward in time.
\end{proposition}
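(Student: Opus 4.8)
The plan is to argue by contraposition directly from the definition of $\ell_{v_0}$ in \eqref{D:ellv0}. Suppose, toward a contradiction, that $(u_0,v_0)\notin\mathcal{S}_+$, i.e.\ that the solution $(u,v)$ does not scatter forward in time. Then $u_0$ belongs to the set $\{\,w_0\in\mathcal{F}\dot{H}^{\frac12}:(w_0,v_0)\notin\mathcal{S}_+\,\}$ over which the infimum defining $\ell_{v_0}$ is taken, so that $\|u_0\|_{\mathcal{F}\dot{H}^{\frac12}}\ge\ell_{v_0}$. This contradicts the hypothesis $\|u_0\|_{\mathcal{F}\dot{H}^{\frac12}}<\ell_{v_0}$. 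Hence $(u_0,v_0)\in\mathcal{S}_+$, that is, $(u,v)$ scatters forward in time. Since $(u,v)$ is globally defined forward in time by the very definition of $\mathcal{S}_+$, the existence of the asymptotic state $(u_+,v_+)\in\mathcal{F}\dot{H}^{\frac12}\times\mathcal{F}\dot{H}^{\frac12}$ in \eqref{E:definition of scattering} is automatic.

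Two small points clarify why this really is all that is required. First, the set appearing in \eqref{D:ellv0} may a priori be empty, in which case $\ell_{v_0}=+\infty$ under the convention $\inf\emptyset=+\infty$; the contrapositive argument is unaffected, as then every $u_0$ satisfies the hypothesis and indeed $(u_0,v_0)\in\mathcal{S}_+$ for all $u_0$. Second, the only genuinely nontrivial input behind the statement is not used in its proof but rather guarantees that the hypothesis is not vacuous, namely the strict positivity $\ell_{v_0}>0$ for every $v_0\in\mathcal{F}\dot{H}^{\frac12}$; this is established separately in Proposition \ref{Small data scattering} by a stability/small-data argument in the function spaces $W_1$, $W_2$.

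There is accordingly no real obstacle in the proof itself: Proposition \ref{P:SSDS} is a tautological reformulation of \eqref{D:ellv0}, with all the analytic substance relegated to the local theory of Theorem \ref{Th. Local well-posedness}, the equivalent characterizations of scattering in Proposition \ref{Scattering criterion}, and the lower bound $\ell_{v_0}>0$ of Proposition \ref{Small data scattering}. The write-up will therefore be limited to the short contrapositive argument above.
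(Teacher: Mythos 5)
Your contrapositive argument is exactly the content behind the paper's remark that the criterion ``is obvious by the definition of $\ell_{v_0}$'': membership of $u_0$ in the set defining the infimum forces $\|u_0\|_{\mathcal{F}\dot{H}^{\frac12}}\ge\ell_{v_0}$, so the hypothesis rules out non-scattering. The proof is correct and coincides with the paper's intended (tautological) justification, including the correct handling of the case $\ell_{v_0}=+\infty$ and the observation that the only nontrivial input, $\ell_{v_0}>0$, lives in Proposition \ref{Small data scattering}.
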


The above criteria ``$\|u_0\|_{\mathcal{F}\dot{H}^\frac{1}{2}}<\ell_{v_0}$" is sharp in such a sense that
the number $\ell_{v_0}$ may not be replaced with any larger number.
The questions which we address in this paper are the following two:
(a) to obtain a condition which implies $\ell_{v_0}$ is finite;
(b) to show the existence of a minimizer to $\ell_{v_0}$ (when $\ell_{v_0}$ is finite).

\subsection{Stability of ground state}

A characteristic property of the mass-subcritical case is that the ground state is orbitally stable in $H^1\times H^1$
\cite{Din20}.
The ground state solution for \eqref{NLS} is a solution of the form
\[
	(e^{i\omega t} Q_{1,\omega}(x), e^{2i \omega t} Q_{2,\omega}(x)),
\]
where $\omega>0$ and $(Q_{1,\omega}(x), Q_{2,\omega}(x))$ is a positive radial solution to the elliptic equation
\[
	- \Delta Q_{1,\omega} + \omega Q_{1,\omega} = 2 Q_{1,\omega} Q_{2,\omega},\quad
	- \frac12 \Delta Q_{2,\omega} + 2\omega Q_{2,\omega} = Q_{1,\omega}^2.
\]
The orbital stability implies that there exists an open neighborhood $\mathcal{N} \subset \mathbb{R}^2$ of $(1,1)$ such that 
\[
	(c_1 Q_{1,\omega}(x), c_2 Q_{2,\omega}(x)) \notin S_+ 
\]
for all $(c_1,c_2) \in \mathcal{N}$. (This also follows from Theorem \ref{Nonpositive energy}, below.)
Hence, the ground state solutions are not optimizers to our problems.
In particular, $\ell_{Q_{2,\omega}}$ is strictly smaller than $\norm{Q_{1,\omega}}_{\mathcal{F} \dot{H}^\frac{1}{2}}$.
Similarly, $(Q_{1,\omega}(x), Q_{2,\omega}(x))$ is not a solution to \eqref{E:systemlc} for any choice of $\alpha>0$.

\subsection{Main results}
It will turn out that the following quantity $\ell_{v_0}^\dagger$ plays an important role in the analysis of $\ell_{v_0}$.
\begin{definition}
For $v_0\in\mathcal{F}\dot{H}^\frac{1}{2}$ and $0\leq \ell<\infty$, we let
\begin{equation*}
L_{v_0}(\ell):=\sup\left\{\|(u,v)\|_{W_1([0,T_{\max}))\times W_2([0,T_{\max}))}\middle|
\begin{array}{l}
(u,v)\text{ is the solution to \eqref{NLS} on }[0,T_{\max}),\\[0.1cm]
v(0)=v_0,\,\,
\|u(0)\|_{\mathcal{F}\dot{H}^\frac{1}{2}}\leq\ell,\ u(0)\in\mathcal{F}\dot{H}^\frac{1}{2}
\end{array}
\right\},
\end{equation*}
where $W_j([0,T_{\max}))$ $(j=1,2)$ is a Strichartz-like function space defined in Subsection \ref{ss:space}, below.
Further, define
\begin{equation}\label{D:ellv0d}
\ell_{v_0}^\dagger:=\sup\{\ell:L_{v_0}(\ell)<\infty\} \in (0,\infty].
\end{equation}
\end{definition}

We have $\ell_{v_0}^\dagger \le \ell_{v_0}$ by the definition of the each quantities (see Lemma \ref{Comparison of ell and ell^dagger} for more detail).
Intuitively, this can be seen by noticing that if $\|u_0\|_{\mathcal{F}\dot{H}^\frac{1}{2}}<\ell_{v_0}^\dagger$ then not only $(u,v)$ scatters forward in time but also 
we have a priori bound $\|(u,v)\|_{W_1([0,\infty))\times W_2([0,\infty))}\le L_{v_0}(\|u_0\|_{\mathcal{F}\dot{H}^\frac{1}{2}}) <\infty$.
As for the single-equation \eqref{E:NLSsingle}, it is known that these two kinds of quantities coincide each other (see \cite{Mas16}).
Our first result is as follows.

\begin{theorem}\label{T:l0}
$\ell_{v_0}^\dagger=\min (\ell_0,\ell_{v_0})$ is true for any $v_0 \in \mathcal{F} \dot{H}^{\frac12}$, including the case where the both sides are infinite.
In particular, $\ell_{0}^\dagger=\ell_0$ holds. 
\end{theorem}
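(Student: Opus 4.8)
The plan is to prove the two inequalities $\ell_{v_0}^\dagger \le \min(\ell_0,\ell_{v_0})$ and $\ell_{v_0}^\dagger \ge \min(\ell_0,\ell_{v_0})$ separately, exploiting the characterization of scattering through boundedness of the $W_1\times W_2$-norm (Proposition~\ref{Scattering criterion}) together with the small-data theory and a perturbation/stability argument around trivial scattering solutions of the form $(0, e^{\frac12 it\Delta}v_0)$.

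For the upper bound, recall $\ell_{v_0}^\dagger \le \ell_{v_0}$ is already noted (Lemma~\ref{Comparison of ell and ell^dagger}), so it remains to show $\ell_{v_0}^\dagger \le \ell_0$. Here $\ell_0 = \ell_0^\dagger$ is the threshold associated with $v_0 = 0$, i.e.\ with data of the form $(u_0, 0)$. The idea is: if $\ell > \ell_0$, then by definition of $\ell_0 = \sup\{\ell : L_0(\ell) < \infty\}$ there is a sequence of data $(u_0^{(n)}, 0)$ with $\|u_0^{(n)}\|_{\mathcal{F}\dot H^{1/2}} \le \ell$ whose solutions have $W_1\times W_2$-norm tending to infinity. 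One then transplants these near-critical $u$-profiles into the problem with second component $v_0$: using the scaling \eqref{002}, rescale so that the $u$-profile concentrates at a small spatial/temporal scale where the contribution of $e^{\frac12 it\Delta}v_0$ is negligible in the relevant Strichartz-type norms (this uses that $\|e^{\frac12 it\Delta}v_0\|_{W_2}$ localizes away from any fixed scale, a standard consequence of the $\mathcal{F}\dot H^{1/2}$-membership and the definition of $W_2$). A perturbation lemma then forces the corresponding solutions with data $(u_0^{(n)\,}_{\{\lambda_n\}}, v_0)$ to also have unbounded $W_1 \times W_2$-norm, showing $L_{v_0}(\ell') = \infty$ for every $\ell' > \ell_0$, hence $\ell_{v_0}^\dagger \le \ell_0$.

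For the lower bound $\ell_{v_0}^\dagger \ge \min(\ell_0,\ell_{v_0})$, suppose $\ell < \min(\ell_0,\ell_{v_0})$; we must show $L_{v_0}(\ell) < \infty$. Consider the (nonempty, by $\ell < \ell_{v_0}$) family of solutions with $v(0) = v_0$ and $\|u(0)\|_{\mathcal{F}\dot H^{1/2}} \le \ell$; each scatters forward in time by Proposition~\ref{P:SSDS}, so each has finite $W_1\times W_2$-norm, and the issue is uniform boundedness of the sup. If this sup were infinite, take a maximizing sequence $(u^{(n)}, v^{(n)})$ with norms $\to \infty$; since $u^{(n)}(0)$ is bounded in $\mathcal{F}\dot H^{1/2}$ one extracts (after the usual scaling/translation renormalization built on the relevant profile decomposition) a profile structure, and the divergence of the norm must come from a profile whose $u$-component has size $\le \ell < \ell_0$. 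But such a profile, decoupled from the $v_0$-background by the concentration argument as above, reduces to the $v_0 = 0$ problem: its solution is controlled by $L_0(\ell) < \infty$ (as $\ell < \ell_0 = \ell_0^\dagger$), contradicting the divergence. Hence $L_{v_0}(\ell) < \infty$ for all such $\ell$, giving $\ell_{v_0}^\dagger \ge \min(\ell_0,\ell_{v_0})$. The final assertion $\ell_0^\dagger = \ell_0$ is then the special case $v_0 = 0$, where $\ell_{v_0} = \ell_0 \ge \ell_0$ trivially and $\min(\ell_0,\ell_0) = \ell_0$.

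The main obstacle I expect is the decoupling step: making rigorous the claim that a $u$-profile living at scale $\lambda_n \to 0$ (or $\infty$) ``does not see'' the fixed background $e^{\frac12 it\Delta}v_0$, in the precise sense needed to run the nonlinear perturbation lemma in the function spaces $W_1, W_2$ and $\dot X_m^{1/2}$. This requires a careful quantitative statement that $\|e^{\frac12 it\Delta}v_0\|_{W_2(J_n)} \to 0$ on the rescaled time intervals $J_n$ on which the profile is active, plus control of the cross terms $v\bar u$ and $u^2$ in the Duhamel iteration — essentially a long-time perturbation theory adapted to this weighted, non-time-translation-invariant setting. The profile decomposition needed for the lower bound (with its attendant orthogonality of scales) is the other technically heavy ingredient, though it should be available from, or parallel to, the concentration-compactness machinery used for the single equation in \cite{Mas15,Mas16}.
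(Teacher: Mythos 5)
Your overall strategy --- proving $\ell_{v_0}^\dagger \le \ell_0$ by transplanting near-threshold data for the $v_0=0$ problem into the $v_0$ problem at concentrated scales where $e^{\frac12 it\Delta}v_0$ is negligible, and proving the reverse inequality by a profile decomposition plus long-time perturbation --- is essentially the route the paper takes: its Proposition \ref{P:l0lv0} is your upper bound, and the argument in Section \ref{Proof of main theorem} is your lower bound, organized there as the extraction of a critical element from a sequence at the exact level $\ell_{v_0}^\dagger$ rather than as a direct finiteness claim for $L_{v_0}(\ell)$ when $\ell<\min(\ell_0,\ell_{v_0})$.

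There is, however, one circular step. In the lower bound you control the decoupled profile ``by $L_0(\ell)<\infty$ (as $\ell<\ell_0=\ell_0^\dagger$)''. The identity $\ell_0=\ell_0^\dagger$ is precisely the ``in particular'' clause of the theorem being proved; a priori only the easy inequality $\ell_0^\dagger\le\ell_0$ (Lemma \ref{Comparison of ell and ell^dagger}) is available, so $\ell<\ell_0$ does not give $\ell<\ell_0^\dagger$ and you may not invoke finiteness of $L_0(\ell)$. (The same conflation appears in your upper bound, where you write ``$\ell_0=\sup\{\ell:L_0(\ell)<\infty\}$'' --- that is the definition of $\ell_0^\dagger$, not of $\ell_0$; the argument there actually proves $\ell_{v_0}^\dagger\le\ell_0^\dagger$, which still suffices because $\ell_0^\dagger\le\ell_0$.) The repair for the lower bound is to avoid the uniform quantity $L_0$ altogether: each profile $(\phi^j,0)$, $j\ge2$, satisfies $\|\phi^j\|_{\mathcal{F}\dot{H}^{1/2}}\le\ell<\ell_0$, hence by the \emph{definition} of $\ell_0$ (an infimum over non-scattering data) its solution scatters and therefore has finite --- though not a priori uniformly bounded --- $W_1\times W_2$ norm by Proposition \ref{Scattering criterion}; since only finitely many profiles exceed the small-data threshold, the sum of nonlinear profiles is bounded and the perturbation lemma applies. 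This is exactly how the paper argues (assume for contradiction that every nonlinear profile scatters and bound the approximate solution by the finitely many large profiles). A second, smaller point: your decoupling discussion covers only the rescaling case $\lambda_n\to 0$ or $\infty$; vanishing deformations also include pure Galilean boosts $|\xi_n|\to\infty$ at fixed scale, for which smallness of the cross term $(e^{\frac12 it\Delta}v_0)\overline{u}$ comes from spatial/frequency separation rather than temporal concentration, and this case must be treated as well.
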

It is worth noting that $\ell_{v_0}^\dagger=\infty$ guarantees $\ell_{v_0}=\infty$ but the inverse is not necessarily true.
Our interest in the sequel is to see what happens when $\ell_{v_0}^\dagger<\infty$.

In the case $v_0=0$, we have $\ell_{0}^\dagger = \ell_0$, including the case both are infinite, as seen in Theorem \ref{T:l0}.
\begin{theorem}\label{T:case0}
If 
$\ell_{0}^\dagger<\infty$ then there exists a minimizer $(u^{(0)}(t),v^{(0)}(t))$ to $\ell_{0}(=\ell_0^\dagger)$ such that
\begin{enumerate}
\item $v^{(0)}(0)=0$ and $\norm{u^{(0)}(0)}_{\mathcal{F} \dot{H}^{\frac12}} = \ell_{0}$;
\item $(u^{(0)}(t),v^{(0)}(t))$ does not scatter forward in time.
\end{enumerate}
\end{theorem}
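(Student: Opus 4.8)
The plan is a concentration--compactness argument along the lines of \cite{Mas15,Mas16}: extract a minimizing sequence, run a linear profile decomposition, use the a priori bound $L_0$ below the threshold together with the stability theory for \eqref{NLS} to reduce to a single profile, and identify the limit as the desired minimizer.

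\emph{Minimizing sequence.} By Theorem \ref{T:l0} we have $\ell_0^\dagger=\ell_0\in(0,\infty)$ (positivity being Proposition \ref{Small data scattering}). By the definition of $\ell_0$, pick $u_{0,n}\in\mathcal{F}\dot{H}^{\frac12}$ with $(u_{0,n},0)\notin\mathcal{S}_+$ and $\|u_{0,n}\|_{\mathcal{F}\dot{H}^{\frac12}}\to\ell_0$, and let $(u_n,v_n)$ be the solution to \eqref{NLS} with data $(u_{0,n},0)$. By the scattering criterion (Proposition \ref{Scattering criterion}), $\|(u_n,v_n)\|_{W_1([0,T_{\max,n}))\times W_2([0,T_{\max,n}))}=\infty$ for every $n$.

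\emph{Profile decomposition and reduction.} Apply the linear profile decomposition for bounded sequences in $\mathcal{F}\dot{H}^{\frac12}$ adapted to the noncompact symmetries of \eqref{NLS} relevant to this framework (the scaling \eqref{002} and, if present, the accompanying deformation, as in \cite{Mas15,Mas16}): for each $J$,
\[
u_{0,n}=\sum_{j=1}^{J}\Lambda_n^j\phi^j+r_n^J,
\]
with asymptotically orthogonal parameters, $\lim_{J\to\infty}\limsup_{n\to\infty}\|e^{it\Delta}r_n^J\|_{\mathrm{Str}}=0$ in the relevant Strichartz-type norm, and the Pythagorean expansion $\|u_{0,n}\|_{\mathcal{F}\dot{H}^{\frac12}}^2=\sum_{j=1}^J\|\phi^j\|_{\mathcal{F}\dot{H}^{\frac12}}^2+\|r_n^J\|_{\mathcal{F}\dot{H}^{\frac12}}^2+o_n(1)$; in particular $\sum_j\|\phi^j\|_{\mathcal{F}\dot{H}^{\frac12}}^2\le\ell_0^2$. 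Since $v_n(0)=0$ for all $n$, the nonlinear profile attached to $\phi^j$ is (a symmetry transform of) the solution to \eqref{NLS} with data $(\phi^j,0)$, so if $\|\phi^j\|_{\mathcal{F}\dot{H}^{\frac12}}<\ell_0=\ell_0^\dagger$ it is forward global and scatters with the a priori bound $L_0(\|\phi^j\|_{\mathcal{F}\dot{H}^{\frac12}})<\infty$. If every nontrivial $\phi^j$ had norm strictly below $\ell_0$ (this includes the case of no nontrivial profile, handled by small-data scattering), then inserting the superposition of the nonlinear profiles plus $r_n^J$ (for $J$ large) into the long-time perturbation lemma for \eqref{NLS} would make $(u_n,v_n)$ forward global with finite $W_1\times W_2$ norm for $n$ large, contradicting the previous paragraph. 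Hence there is exactly one nontrivial profile and, by the Pythagorean identity, $\|\phi^1\|_{\mathcal{F}\dot{H}^{\frac12}}=\ell_0$ and $\|r_n^1\|_{\mathcal{F}\dot{H}^{\frac12}}\to0$.

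\emph{Conclusion and main obstacle.} The entire setup is invariant under the scaling \eqref{002} -- which preserves both the constraint $v(0)=0$ and the value $\ell_0$ -- so the scaling part of $\Lambda_n^1$ can be removed; a remaining noncompact parameter, if bounded, is absorbed into the profile along a subsequence, and a divergent one is excluded since it would force the nonlinear profile to be asymptotically linear, hence forward scattering, in forward time. We thus arrive at $u_{0,n}\to u^{(0)}(0):=\phi^1$ strongly in $\mathcal{F}\dot{H}^{\frac12}$, with $v^{(0)}(0):=0$ and $\|u^{(0)}(0)\|_{\mathcal{F}\dot{H}^{\frac12}}=\ell_0$; let $(u^{(0)},v^{(0)})$ be the corresponding solution. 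If it scattered forward in time it would have finite $W_1([0,\infty))\times W_2([0,\infty))$ norm, and then the stability lemma would give $\|(u_n,v_n)\|_{W_1\times W_2}<\infty$ for $n$ large, contradicting the choice of $(u_n,v_n)$; hence $(u^{(0)},v^{(0)})$ does not scatter forward, i.e. $(u^{(0)}(0),0)\notin\mathcal{S}_+$, and together with $\|u^{(0)}(0)\|_{\mathcal{F}\dot{H}^{\frac12}}=\ell_0$ it is a minimizer for $\ell_0(=\ell_0^\dagger)$ enjoying (1) and (2). The main obstacle will be to establish and correctly invoke the linear profile decomposition together with the nonlinear-profile and stability machinery in the homogeneous weighted space $\mathcal{F}\dot{H}^{\frac12}$ -- in particular, pinning down the noncompact symmetry group of \eqref{NLS} in this setting and verifying that the perturbation argument closes in the spaces $W_1,W_2$; the handling of a divergent symmetry parameter without spoiling the forward-scattering dichotomy is a further, more routine, point.
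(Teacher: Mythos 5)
Your strategy is the same as the paper's: run the linear profile decomposition on the data, use the a priori bound $L_0(\ell)<\infty$ for $\ell<\ell_0^\dagger$ together with the long time perturbation lemma (Proposition \ref{Long time perturbation}) to show that some profile must fail to scatter, and then read off from the Pythagorean expansion that there is exactly one nontrivial profile, of norm exactly $\ell_0$, which furnishes the minimizer. The only substantive difference from the paper is the choice of minimizing sequence: you take non-scattering data with $\|u_{0,n}\|_{\mathcal{F}\dot{H}^{1/2}}\downarrow\ell_0$, whereas the paper takes scattering data with $\|u_{0,n}\|_{\mathcal{F}\dot{H}^{1/2}}\uparrow\ell_0^\dagger$ whose $W_1\times W_2$ norms diverge (saturating $L_0$); both lead to the same contradiction in the reduction step, and the paper's choice is dictated mainly by the fact that it proves Theorems \ref{T:l0}--\ref{T:case2} in one stroke.

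One claim in your concluding paragraph is wrong, though harmlessly so. You assert that a divergent deformation parameter for the surviving profile can be excluded because it ``would force the nonlinear profile to be asymptotically linear, hence forward scattering.'' In this framework the deformations are only frequency translations $T(\xi)$ and dilations $D(h)$ --- there are no time-translation parameters, precisely because the solution concept is not time-translation invariant --- and both are exact symmetries of the nonlinear system (via \eqref{009} and \eqref{002}) which leave the $W_1\times W_2$ norms invariant. Hence divergence of $(\xi_n^1,h_n^1)$ does not linearize the profile and cannot be ruled out this way; indeed Theorem \ref{T:case2} describes exactly the situation where it occurs. Fortunately nothing in Theorem \ref{T:case0} requires strong convergence of $u_{0,n}$: the minimizer is simply the solution with data $(\phi^1,0)$, and your final stability argument closes if you run Proposition \ref{Long time perturbation} against the deformed nonlinear profile $((\Phi_1)_{[h_n^1,\xi_n^1]},(\Psi_1)_{[h_n^1,\xi_n^1]})$ rather than against $(\Phi_1,\Psi_1)$ itself, using that the deformation changes neither the $W_1\times W_2$ norm nor the initial-data error $\|r_n^1\|_{\mathcal{F}\dot{H}^{1/2}}\to0$. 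With that step rerouted, the argument is complete modulo the standard (and correctly flagged) verifications of the profile machinery, i.e.\ the analogues of Lemmas \ref{Lemma1 for main result} and \ref{Lemma2 for main result}.
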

So far, we do not know whether $\ell_0^\dagger<\infty$ or not.
It will turn out that this question is important to understand the attainability of $\ell_{v_0}$ for all non-zero $v_0$.
One quick conclusion of $\ell_0^\dagger=\infty$ is $\ell_{v_0}=\ell_{v_0}^\dagger$ for all $v_0$, which follows from Theorem \ref{T:l0}.
We will resume this subject later.

Let us move on to the case $v_0\neq 0$.
Suppose $\ell_{v_0}^\dagger <\infty$.
Then, we have either
\begin{equation}\label{E:CaseAB}
\ell_{v_0}^\dagger=\ell_{v_0}
\quad \text{ or } \quad
\ell_{v_0}^\dagger< \ell_{v_0}.
\end{equation}
The following Theorem \ref{T:case1} is about the first case and Theorem \ref{T:case2} is about the second case, respectively.

\begin{theorem}\label{T:case1}
Fix $v_0 \in \mathcal{F} \dot{H}^{\frac12}\setminus \{0\}$.
Suppose that $\ell_{v_0}^\dagger=\ell_{v_0}<\ell_0$.
Then, there exists a minimizer $(u^{(v_0)}(t),v^{(v_0)}(t))$ to $\ell_{v_0}$ such that
\begin{enumerate}
\item $v^{(v_0)}(0)=v_0$ and $\norm{u^{(v_0)}(0)}_{\mathcal{F} \dot{H}^{\frac12}} = \ell_{v_0}$;
\item $(u^{(v_0)}(t),v^{(v_0)}(t))$ does not scatter forward in time.
\end{enumerate}
\end{theorem}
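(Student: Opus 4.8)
The plan is a concentration--compactness argument that produces the minimizer as a critical element, modeled on the single-equation analysis of \cite{Mas15,Mas17} with the fixed datum $v_0$ acting as an inhomogeneous background. Since $\ell_{v_0}^\dagger=\ell_{v_0}<\infty$, \eqref{D:ellv0} supplies a sequence $u_{0,n}\in\mathcal{F}\dot{H}^{1/2}$ with $(u_{0,n},v_0)\notin\mathcal{S}_+$ and $\ell_{v_0}\le\|u_{0,n}\|_{\mathcal{F}\dot{H}^{1/2}}\to\ell_{v_0}$; let $(u_n,v_n)$ solve \eqref{NLS} with $(u_n(0),v_n(0))=(u_{0,n},v_0)$ on its maximal forward interval $[0,T_{\max,n})$, so by Proposition \ref{Scattering criterion} one has $\|u_n\|_{W_1([0,T_{\max,n}))}=\|v_n\|_{W_2([0,T_{\max,n}))}=\infty$. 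The goal is to show that, along a subsequence, $u_{0,n}$ converges \emph{strongly} in $\mathcal{F}\dot{H}^{1/2}$ to some $u^{(v_0)}(0)$ with $\|u^{(v_0)}(0)\|_{\mathcal{F}\dot{H}^{1/2}}=\ell_{v_0}$, and that the solution $(u^{(v_0)},v^{(v_0)})$ with data $(u^{(v_0)}(0),v_0)$ does not scatter forward; this is precisely a minimizer with properties (1) and (2).

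First I would apply the linear profile decomposition in $\mathcal{F}\dot{H}^{1/2}$ for the Schr\"odinger group to the bounded sequence $\{u_{0,n}\}$: after passing to a subsequence, $u_{0,n}=\sum_{j=1}^{J}\Gamma_n^j\phi^j+w_n^J$, with profiles $\phi^j$, pairwise asymptotically orthogonal parameter families $\Gamma_n^j$ (dilations, translations, and time parameters coming from the symmetries of the linear flow on $\mathcal{F}\dot{H}^{1/2}$), the Pythagorean expansion
\[
\|u_{0,n}\|_{\mathcal{F}\dot{H}^{1/2}}^2=\sum_{j=1}^{J}\|\phi^j\|_{\mathcal{F}\dot{H}^{1/2}}^2+\|w_n^J\|_{\mathcal{F}\dot{H}^{1/2}}^2+o_n(1),
\]
and the remainder bound $\limsup_{J\to\infty}\limsup_{n\to\infty}\|e^{it\Delta}w_n^J\|=0$ in the Strichartz-type space governing the nonlinear iteration. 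Since $v_0$ is one fixed datum (unit scale, bounded location, zero time parameter), I would distinguish the at most one \emph{central} profile --- whose parameters stay bounded and, after a further subsequence, converge --- from the \emph{decoupled} ones (dilation $\to0$ or $\infty$, translation $\to\infty$, or time parameter $\to\pm\infty$).

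The crux is to show that $J=1$, with a central profile and $w_n^1\to0$, and here both hypotheses enter. The equality $\ell_{v_0}^\dagger=\ell_{v_0}$ says exactly that $L_{v_0}(\ell)<\infty$ for every $\ell<\ell_{v_0}$, i.e.\ that sub-threshold data $(u_0,v_0)$ evolve with a uniformly bounded $W_1\times W_2$ norm --- the a priori control the long-time perturbation (stability) scheme needs --- while $\ell_{v_0}<\ell_0=\ell_0^\dagger$ (Theorem \ref{T:l0}) governs the profiles that separate from $v_0$. Building the nonlinear profiles: a decoupled profile feels $v_0$ only negligibly ($v_0$, transported into the profile's adapted variables, tends to zero in the spaces controlling the iteration), so it evolves as a solution of \eqref{NLS} with vanishing second component, and since $\|\phi^j\|_{\mathcal{F}\dot{H}^{1/2}}\le\ell_{v_0}<\ell_0^\dagger$ it carries a finite $W_1\times W_2$ bound $\le L_0(\ell_{v_0})$ by \eqref{D:ellv0d}; a profile with time parameter $\to+\infty$ has vanishing free-flow $W_1$-norm on $[0,\infty)$, so a short bootstrap (splitting $[0,\infty)$ into finitely many intervals on which $e^{\frac{1}{2}it\Delta}v_0$ is small in $W_2$) makes its nonlinear $W_1\times W_2$ norm arbitrarily small; a time parameter $\to-\infty$ is absorbed by the wave-operator theory for \eqref{NLS} in $\mathcal{F}\dot{H}^{1/2}$; and a central profile accompanied by a second profile or a nonvanishing remainder has $\|\phi^{\mathrm{central}}\|_{\mathcal{F}\dot{H}^{1/2}}<\ell_{v_0}$ strictly, so the solution with data $(\phi^{\mathrm{central}},v_0)$ lies in $\mathcal{S}_+$ by \eqref{D:ellv0} and has finite $W_1\times W_2$ norm. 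Thus in every configuration other than ``a single central profile with $w_n^1\to0$'' all nonlinear profiles (and, being asymptotically free and small, the remainder) have finite $W_1\times W_2$ norm; superposing them and controlling the error by long-time perturbation would force $\|u_n\|_{W_1}+\|v_n\|_{W_2}<\infty$ for large $n$, contradicting non-scattering. Hence we are in the surviving configuration, and absorbing the convergent parameters $\Gamma_n^1\to\Gamma_\infty$ into $\phi^1$ gives, along a subsequence, $u_{0,n}\to u^{(v_0)}(0):=\Gamma_\infty\phi^1$ strongly in $\mathcal{F}\dot{H}^{1/2}$ with $\|u^{(v_0)}(0)\|_{\mathcal{F}\dot{H}^{1/2}}=\ell_{v_0}$.

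It remains to check that the solution $(u^{(v_0)},v^{(v_0)})$ with data $(u^{(v_0)}(0),v_0)$ does not scatter forward: if it did, then $\|u^{(v_0)}\|_{W_1([0,\infty))}+\|v^{(v_0)}\|_{W_2([0,\infty))}<\infty$, and since $u_n(0)\to u^{(v_0)}(0)$ in $\mathcal{F}\dot{H}^{1/2}$ while $v_n(0)=v_0=v^{(v_0)}(0)$, the long-time perturbation lemma would give $\|u_n\|_{W_1}+\|v_n\|_{W_2}<\infty$, hence forward scattering, for all large $n$ --- contradicting $(u_{0,n},v_0)\notin\mathcal{S}_+$. Thus $(u^{(v_0)},v^{(v_0)})$ is a minimizer satisfying (1) and (2). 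I expect the main obstacle to be this single-profile reduction carried out in the non-unitary weighted space $\mathcal{F}\dot{H}^{1/2}$: one must set up the nonlinear profile decomposition and the long-time perturbation theory in a framework where $e^{it\Delta}$ is not bounded on $\mathcal{F}\dot{H}^{1/2}$ and where the notion of solution is not time-translation invariant (cf.\ Definition \ref{D:sol}), and --- the genuinely new feature relative to the single equation --- one must verify that off-scale, off-center, and divergent-time profiles genuinely decouple from the fixed datum $v_0$, so that their nonlinear evolution is governed by $\ell_0^\dagger$ or by the wave operators; the strict inequality $\ell_{v_0}<\ell_0$ is exactly what rules out a decoupled profile of size $\ell_{v_0}$ being a non-scatterer in the $v_0=0$ problem and thereby breaking the compactness.
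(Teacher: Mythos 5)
Your plan is correct and is essentially the paper's argument: linear profile decomposition of the data $(u_{0,n},v_0)$, nonlinear profiles reassembled via the long-time perturbation lemma, the Pythagorean expansion to cap each profile at $\ell_{v_0}$, and the strict inequality $\ell_{v_0}<\ell_0$ to exclude a non-scattering decoupled profile (with vanishing second component), leaving a single central profile attached to $v_0$ as the minimizer. Two minor deviations from the paper are worth noting. First, the paper's minimizing sequence is taken \emph{below} threshold, $\norm{u_{0,n}}_{\mathcal{F}\dot{H}^{1/2}}\uparrow\ell_{v_0}^\dagger$ with $\norm{(u_n,v_n)}_{W_1\times W_2}\to\infty$ (exploiting $L_{v_0}(\ell_{v_0}^\dagger)=\infty$), rather than your non-scattering sequence from above; under the hypothesis $\ell_{v_0}^\dagger=\ell_{v_0}$ both work and lead to the same contradiction. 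Second, the deformation group here consists only of dilations and Galilean boosts $T(\xi)D(h)$ --- spatial and time translations are not isometries of $\mathcal{F}\dot{H}^{1/2}$ and do not appear in Theorem \ref{Linear profile decomposition} --- so the cases you set aside for time parameters $\to\pm\infty$ (wave operators, vanishing free evolution) simply do not arise; this only simplifies your case analysis and does not affect the conclusion.
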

The case $\ell_{v_0}^\dagger=\ell_{v_0}=\ell_0<\infty$ is excluded in the above theorem.
We consider this exceptional case in Remark \ref{R:exception}, below.

Let us consider the second case of \eqref{E:CaseAB}.
In this case, the following strange thing occurs:
Take $u_0 \in \mathcal{F}\dot{H}^{\frac12}$ with $\norm{u_0}_{\mathcal{F}\dot{H}^{\frac12}}= \ell_{v_0}^\dagger$
and consider the corresponding solution $(u(t),v(t))$ with the data $(u_0,v_0)$. 
Then, in one hand, the solution $(u(t),v(t))$ scatters forward in time for any choice of such $u_0$
since $\norm{u_0}_{\mathcal{F}\dot{H}^{\frac12}}<\ell_{v_0}$.
However, on the other hand,
for arbitrarily large number $N>0$, one can choose $u_0 \in \mathcal{F}\dot{H}^{\frac12}$ so that the
corresponding solution $(u(t),v(t))$ satisfies
\begin{align*}
	\norm{(u,v)}_{W_1([0,\infty)) \times W_2([0,\infty))} \ge N.
\end{align*}
The next theorem tells us how this is ``attained".
Notice that the second case of \eqref{E:CaseAB} occurs only when $\ell_0=\ell_{v_0}^\dagger <\infty$, thanks to Theorem \ref{T:l0}.
Consequently, there is a minimizer to $\ell_0$ in this case, by means of Theorem \ref{T:case0}.

\begin{theorem}\label{T:case2}
Fix $v_0 \in \mathcal{F} \dot{H}^{\frac12}\setminus\{0\}$.
Suppose that $\ell_{v_0}^\dagger < \ell_{v_0}$.
Pick a sequence $\{ u_{0,n} \}_{n} \subset \mathcal{F} \dot{H}^{\frac12}$ satisfying
$\norm{u_{0,n}}_{\mathcal{F} \dot{H}^{\frac12}}< \ell_{v_0}^\dagger$ for all $n\ge1$,
\begin{align*}
	\lim_{n\to\infty} \norm{u_{0,n}}_{\mathcal{F} \dot{H}^{\frac12}} = \ell_{v_0}^\dagger,
\end{align*}
and
\begin{align*}
	\lim_{n\to\infty} \norm{(u_{n},v_{n})}_{W_1([0,\infty)) \times W_2([0,\infty))} = \infty,
\end{align*}
where $(u_n(t),v_n(t))$ is a solution with the initial data $(u_n(0),v_n(0))=(u_{0,n},v_0)$.
Then, there exist a subsequence of $n$, a minimizer $(u^{(0)},v^{(0)})$ to $\ell_0$, and two sequences $\{\xi_n\}_n \subset \mathbb{R}^3$ and
$\{h_n\}_n \subset 2^\mathbb{Z}$ such that
\begin{align*}
	|\log h_n| + |\xi_n| \longrightarrow \infty
\end{align*}
and
\begin{align*}
	e^{-ix\cdot h_n \xi_n }(u_{0,n})_{\{h_n\}} \longrightarrow u^{(0)}(0) \quad \text{in }\mathcal{F} \dot{H}^{\frac12}
\end{align*}
hold along the subsequence.
In particular, along the same subsequence, it holds for any $\tau \in (0, T_{\max}(u^{(0)},v^{(0)}))$ that
\begin{align*}
(u_n(t),v_n(t))
	&=\(e^{-it|\xi_n|^2+ix\cdot\xi_n}u^{(0)}_{[h_n^{-1}]}\(t,x-2\xi_nt\),e^{-2it|\xi_n|^2+2ix\cdot\xi_n}v^{(0)}_{[h_n^{-1}]}\(t,x-2\xi_nt\)\) \\
	&\hspace{8.0cm} +(0,e^{\frac{1}{2}it\Delta}v_0) + o_{\dot{X}_\frac{1}{2}^{\frac{1}{2}}(t)\times \dot{X}_1^{\frac{1}{2}}(t)}(1)
\end{align*}
for $0 \le t \le \tau h_n^2$.
\end{theorem}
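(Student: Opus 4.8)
The plan is to run a profile/concentration–compactness decomposition on the sequence $\{u_{0,n}\}$ (with the fixed $v_0$) and to isolate the single profile that must carry all of the blowing-up $W_1\times W_2$-norm; the blowup then forces that profile to live ``at infinity'' in the symmetry group. First I would invoke the linear profile decomposition adapted to the space $\mathcal{F}\dot H^{1/2}$ and to the symmetries of \eqref{NLS} that are relevant here, namely the rescaling \eqref{002} (with parameters $h_n \in 2^\mathbb{Z}$) and the Galilean/modulation boosts $u_0 \mapsto e^{ix\cdot\eta}u_0$ (with frequency parameters $\xi_n$); this is the profile decomposition underlying the concentration–compactness machinery that produces the minimizers in Theorems \ref{T:case0}--\ref{T:case1}. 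Write $u_{0,n} = \sum_{j=1}^{J} (g_n^j \phi^j) + r_n^J$, where $g_n^j$ are the symmetry operators with parameters $(h_n^j, \xi_n^j)$, the parameters are pairwise asymptotically orthogonal across $j$, and $\|e^{it\Delta}r_n^J\|$ goes to zero in a suitable Strichartz-type norm (in the $W_1$ scale) as $J\to\infty$ uniformly in $n$. Crucially, $v_0$ is held fixed and attached to each profile via the ``trivial'' component $(0, e^{\frac12 it\Delta}v_0)$; the profiles interact with $v_0$ only through the quadratic coupling, and asymptotic orthogonality of the $u$-profile parameters makes the cross terms with the fixed $e^{\frac12 it\Delta}v_0$ negligible in the nonlinear evolution for those profiles whose parameters escape to infinity.

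Next I would feed this decomposition into the nonlinear stability/perturbation theory (the same one behind Proposition \ref{Small data scattering} and the local theory of Theorem \ref{Th. Local well-posedness}) to obtain an approximate-solution statement: the nonlinear solution $(u_n,v_n)$ is, up to small errors in $C(\dot X_{1/2}^{1/2})\times C(\dot X_1^{1/2})$, a superposition of the nonlinear evolutions of the individual profiles plus $(0,e^{\frac12 it\Delta}v_0)$. The almost-orthogonality then gives an almost-additivity of the $W_1\times W_2$-norm over the profiles. Since $\liminf_n \|u_{0,n}\|_{\mathcal F\dot H^{1/2}} = \ell_{v_0}^\dagger \le \ell_0$ and each profile has $\mathcal F\dot H^{1/2}$-mass at most $\ell_{v_0}^\dagger$ in the limit, the definition of $\ell_0^\dagger$ (and $\ell_0^\dagger=\ell_0$ from Theorem \ref{T:l0}) forces the following dichotomy for each profile: either its parameters $(h_n^j,\xi_n^j)$ stay bounded — in which case the profile, evaluated at the fixed $v_0$, has a finite $W_1\times W_2$-norm because its data has $\mathcal F\dot H^{1/2}$-norm strictly below $\ell_{v_0}$ — or its parameters run to infinity, in which case the coupling with the fixed $v_0$ decouples and the profile solves (asymptotically) the free/trivial system paired with a solution whose $u$-data has norm $\le\ell_0^\dagger$. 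Because $\sum_j \|\phi^j\|_{\mathcal F\dot H^{1/2}}^2 \le (\ell_{v_0}^\dagger)^2 \le \ell_0^2$ and the blowup must come from some profile, there can be exactly one ``bad'' profile, its parameters must escape to infinity, its entire $\mathcal F\dot H^{1/2}$-mass must equal $\ell_0$, all other profiles and the remainder must vanish, and the bad profile's $u$-data must be a minimizer of $\ell_0$. This simultaneously gives $\ell_{v_0}^\dagger=\ell_0<\infty$ (consistent with Theorem \ref{T:l0}) and the existence of the minimizer $(u^{(0)},v^{(0)})$, which is exactly the object produced in Theorem \ref{T:case0}.

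Having pinned down the one profile, I would then read off the asserted asymptotic formula. The statement $e^{-ix\cdot h_n\xi_n}(u_{0,n})_{\{h_n\}} \to u^{(0)}(0)$ in $\mathcal F\dot H^{1/2}$ is just a rewriting of ``$u_{0,n} = g_n(u^{(0)}(0)) + o(1)$'' after undoing the rescaling and boost, and $|\log h_n| + |\xi_n| \to \infty$ is the statement that the parameters escape to infinity. The final displayed identity for $(u_n(t),v_n(t))$ on $0\le t\le \tau h_n^2$ is obtained by combining: (i) the nonlinear approximation above (the remainder and other profiles contribute the $o_{\dot X_{1/2}^{1/2}(t)\times\dot X_1^{1/2}(t)}(1)$), (ii) the explicit action of the scaling \eqref{002} and the Galilean boost on a solution — the boost turns $u^{(0)}(t,x)$ into $e^{-it|\xi_n|^2+ix\cdot\xi_n}u^{(0)}(t,x-2\xi_n t)$ and $v^{(0)}$ into $e^{-2it|\xi_n|^2+2ix\cdot\xi_n}v^{(0)}(t,x-2\xi_n t)$ (the factor $2$ in the $v$-phases and the shared group velocity $2\xi_n$ reflect the relative mass $\tfrac12$ in the $v$-equation and the quadratic coupling), and (iii) the decoupling of the fixed $(0,e^{\frac12 it\Delta}v_0)$ piece, which persists unchanged because of the asymptotic orthogonality. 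The time range $0\le t\le\tau h_n^2$ is the image of the fixed interval $[0,\tau]\subset(0,T_{\max}(u^{(0)},v^{(0)}))$ under the rescaling, so the approximation is valid exactly where the rescaled profile solution is defined.

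The main obstacle I expect is item (ii) together with the interplay between the fixed $v_0$ and the escaping profile: one must verify that the Galilean-type symmetry is genuinely a symmetry of the \emph{coupled} system in the weighted space $\dot X_m^{1/2}$ (the two components carry different masses $m=\tfrac12,1$, so the boost must act with matched velocities and doubled phase on $v$), and one must control the nonlinear cross terms between the boosted, highly oscillatory profile and the fixed low-frequency $e^{\frac12 it\Delta}v_0$ in the $W_1,W_2$ norms — showing these are $o(1)$ as $|\log h_n|+|\xi_n|\to\infty$ is where the bulk of the estimation lies, and it is the step that genuinely uses the structure of the nonlinearity rather than soft concentration-compactness.
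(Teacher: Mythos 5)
Your proposal follows essentially the same route as the paper: a linear profile decomposition of $(u_{0,n},v_0)$ in which the fixed $v_0$ pins one profile to the identity deformation, a nonlinear-profile/stability argument showing the blowup of the $W_1\times W_2$ norm must be carried by a single non-scattering profile of full mass $\ell_{v_0}^\dagger=\ell_0$, the observation that $\ell_{v_0}^\dagger<\ell_{v_0}$ rules out that profile being the one attached to $v_0$ (so its parameters must escape, yielding a minimizer to $\ell_0$), and a final long-time perturbation step on $[0,\tau h_n^2]$ in which the cross term between the escaping profile and $e^{\frac12 it\Delta}v_0$ is the error to be shown $o(1)$. This matches the paper's proof, including the identification of where the genuine estimation work lies.
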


\begin{remark}\label{R:exception}
The special case $\ell_{v_0}^\dagger = \ell_0 = \ell_{v_0}<\infty$ ($v_0\neq 0$) is not included in the above two theorems.
In this exceptional case, the conclusion of Theorem \ref{T:case1} and/or Theorem \ref{T:case2} holds.
Namely, if there does not exist a minimizer to $\ell_{v_0}$ as in Theorem \ref{T:case1},
then the conclusion of Theorem \ref{T:case2} is true.
\end{remark}

Let us summarize the above results.
Let $v_0 \in \mathcal{F} \dot{H}^{\frac12}$ be a given function.
If $\ell_{v_0}^\dagger=\infty$ then we have $\ell_{v_0}=\infty$ (Theorem \ref{T:l0}) and hence any solution satisfies $v(0)=v_0$ scatters forward in time (Proposition \ref{P:SSDS}).
On the other hand, if $\ell_{v_0}^\dagger<\infty$ and $v_0\neq0$ then we have either Theorem \ref{T:case1} or Theorem \ref{T:case2} according to the dichotomy \eqref{E:CaseAB}. Remark that the first case in \eqref{E:CaseAB} contains an exceptional case discussed in Remark \ref{R:exception}.
When $v_0=0$ then we do not have the dichotomy, we have $\ell_0=\ell^\dagger_0$ (Theorem \ref{T:l0}).
If $\ell_0$ is finite then there exists a minimizer (Theorem \ref{T:case0}).

The question whether $\ell_0=\infty$ or not would be an interesting question to the system \eqref{NLS}.
We do not have the answer yet. Let us formulate the problem without using our terminology:
\begin{question}
In \eqref{NLS}, does $v_0=0$ implies scattering of the corresponding solution for any $u_0$?
\end{question}
If it were true, that is, if $\ell_0=\ell_0^\dagger=\infty$ then Theorem \ref{T:l0} tells us that
$\ell_{v_0}^\dagger=\ell_{v_0}$ is true for any $v_0$, as mentioned above.

\smallbreak

Although we do not know the exact value of $\ell_{v_0}$, we are able to have a condition which implies the finiteness of $\ell_{v_0}$
and to give an upper bound for $\ell_{v_0}$.
A simple one is a condition in terms of the energy.

\begin{theorem}\label{Nonpositive energy}
Fix nontrivial $u_0, v_0 \in \mathcal{F} \dot{H}^{\frac12} \cap H^1$.
If $E[u_0,v_0]\le 0$ and then the corresponding solution $(u,v)$ does not scatter.
In particular, $\ell_{v_0} \le \norm{u_0}_{\mathcal{F}\dot{H}^{\frac12}}$.
\end{theorem}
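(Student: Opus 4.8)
The plan is to argue by contradiction: assuming the solution $(u,v)$ scatters (say forward in time, the backward case being symmetric), we show this forces a virial-type identity to drive a positive quantity to zero, contradicting $E[u_0,v_0] \le 0$ together with nontriviality. Since the data lie in $H^1 \times H^1$ and the energy is nonpositive, the natural conserved-quantity approach is available even though the scattering itself is measured in $\mathcal{F}\dot{H}^{1/2}$. First I would record that scattering forward in time implies, via Proposition \ref{Scattering criterion}, that $\norm{u}_{W_1((0,\infty))} + \norm{v}_{W_2((0,\infty))} < \infty$, and in particular (by the Strichartz/dispersive structure of these weighted spaces) that suitable space-time norms of $(u,v)$ decay; concretely one expects $\norm{(u(t),v(t))}_{L^3_x \times L^3_x} \to 0$ or a comparable decay along a sequence of times, which makes the nonlinear term $\int \Re(u^2\bar v)\,dx$ tend to $0$ as $t \to \infty$. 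Then energy conservation gives $\limsup_{t\to\infty} \big(\norm{\nabla u(t)}_{L^2}^2 + \tfrac12\norm{\nabla v(t)}_{L^2}^2\big) = E[u_0,v_0] \le 0$, forcing $\nabla u(t), \nabla v(t) \to 0$ in $L^2$ along that sequence.

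Next I would use this gradient decay to contradict nontriviality. The cleanest route is a mass-type lower bound: the scattering also gives convergence $e^{-it\Delta}u(t) \to u_+$, $e^{-\frac12 it\Delta}v(t) \to v_+$ in $\mathcal{F}\dot{H}^{1/2}$, and one checks that $M[u,v]$ is (at least asymptotically) conserved and equals $M[u_+,v_+]$ in the limit, while simultaneously the $H^1$ norm stays bounded; combining $\nabla u(t) \to 0$ with a Gagliardo–Nirenberg/interpolation inequality of the form $\norm{u(t)}_{L^2}^2 \lesssim \norm{u(t)}_{\dot{H}^{-1/2}} \norm{u(t)}_{\dot{H}^{1/2}}$ and the scaling-critical conservation of $\norm{\cdot}_{\dot{H}^{-1/2}}$, one forces $\norm{u(t)}_{L^2}, \norm{v(t)}_{L^2} \to 0$, hence $M[u_0,v_0]=0$ by conservation of mass, hence $(u_0,v_0)=(0,0)$ — the contradiction. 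Alternatively, and perhaps more robustly, one runs a localized virial identity: with $\chi_R$ a smooth cutoff, $\frac{d^2}{dt^2}\int \chi_R(|u|^2 + 2|v|^2)\,dx$ is controlled by $E[u,v]$ plus remainder terms that vanish as $R\to\infty$ on a scattering solution, and the convexity this produces is incompatible with the boundedness of $\int \chi_R(|u|^2+2|v|^2)$ that scattering guarantees, unless the solution is trivial.

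The main obstacle I anticipate is reconciling the two function-space settings: scattering is formulated in the weighted space $\mathcal{F}\dot{H}^{1/2}$, whereas the energy and mass arguments live naturally in $H^1 \times H^1$, so I need to propagate the $H^1 \cap \mathcal{F}\dot{H}^{1/2}$ regularity of the data along the flow and confirm that the conserved quantities $M$ and $E$ are genuinely conserved (not merely formally) for such solutions — this likely requires an approximation argument or a persistence-of-regularity statement that should be available from the local theory (Theorem \ref{Th. Local well-posedness2}). A secondary technical point is extracting the decay $\int \Re(u^2\bar v) \to 0$ from mere finiteness of the $W_1 \times W_2$ norm; this is where I would lean on the pseudo-conformal structure implicit in the $\dot{X}^{s,r}_m(t)$ spaces, which encode time-decay, so that a finite $W_j$ norm yields $L^p_x$-decay of $(u(t),v(t))$ in $t$ along a sequence, enough to kill the potential energy term. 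Once these two points are handled, the contradiction with $E[u_0,v_0]\le0$ and nontriviality is immediate, and the final assertion $\ell_{v_0} \le \norm{u_0}_{\mathcal{F}\dot{H}^{1/2}}$ follows directly from the definition \eqref{D:ellv0} since $(u_0,v_0)\notin\mathcal{S}_+$.
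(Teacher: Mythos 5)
Your overall strategy coincides with the paper's for the main part: upgrade the data to $H^1\times H^1$ by persistence of regularity (this is Proposition \ref{LWP H1} in the paper), use the finiteness of the $S$-norm guaranteed by Proposition \ref{Scattering criterion} to find times $t_n\to\infty$ along which $\norm{u(t_n)}_{L^{9/2}_x}+\norm{v(t_n)}_{L^{9/2}_x}\to0$, interpolate with the conserved mass to get $L^3_x$-decay, kill the interaction term $\int\Re(u^2\overline{v})\,dx$ by H\"older, and conclude from energy conservation that $E[u_0,v_0]\ge0$. That part is correct and is exactly the paper's argument.

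The gap is in the borderline case $E[u_0,v_0]=0$, which the theorem includes and which is precisely where the nontriviality hypothesis enters. Your first route rests on ``the scaling-critical conservation of $\norm{\cdot}_{\dot H^{-1/2}}$'': the $\dot H^{-1/2}$ norm is invariant under the scaling symmetry \eqref{002}, but it is \emph{not} a conserved quantity of the flow, and no a priori bound on $\norm{u(t)}_{\dot H^{-1/2}}$ is available here, so the interpolation $\norm{u}_{L^2}^2\lesssim\norm{u}_{\dot H^{-1/2}}\norm{u}_{\dot H^{1/2}}$ does not close. Your alternative, the localized virial, fails for a structural reason: the problem is mass-subcritical, so the virial identity has the schematic form $V''=16E+c\int\Re(u^2\overline{v})\,dx$ with $c>0$ rather than $V''\le 16E$, and $E\le0$ yields no concavity and no incompatibility with boundedness (indeed such solutions are global). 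The correct and simpler conclusion is the one the paper uses: since the potential term vanishes along $t_n$ and the scattering limit holds in $H^1\times H^1$ by Proposition \ref{LWP H1}, one gets $E[u_0,v_0]=\norm{\nabla u_+}_{L^2}^2+\frac12\norm{\nabla v_+}_{L^2}^2$; if this is zero then $(u_+,v_+)=(0,0)$, and mass conservation together with the $H^1$-convergence forces $M[u_0,v_0]=M[u_+,v_+]=0$, i.e.\ $(u_0,v_0)=(0,0)$, contradicting nontriviality. Your final deduction of $\ell_{v_0}\le\norm{u_0}_{\mathcal{F}\dot{H}^{1/2}}$ from the definition \eqref{D:ellv0} is fine.
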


In our context, we want to find a condition which is stated in terms of $v_0$ only. 
We give two criteria in this direction.
The first one is for large data case:
\begin{corollary}\label{Large data nonscattering}
For any $v_0 \in \mathcal{F} \dot{H}^{\frac12} \cap H^1$ with $v_0\neq0$, 
there exists 
$c_0>0$ such that the estimate $\ell_{cv_0} \lesssim_{v_0} c^{\frac12}$
holds for any $c \ge c_0$.
\end{corollary}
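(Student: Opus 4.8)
The plan is to deduce the bound from Theorem \ref{Nonpositive energy} by exhibiting, for each large coupling parameter $c$, an initial datum $u_0^{(c)} \in \mathcal{F}\dot H^{\frac12} \cap H^1$ with $\|u_0^{(c)}\|_{\mathcal{F}\dot H^{\frac12}} \lesssim_{v_0} c^{\frac12}$ and $E[u_0^{(c)}, c v_0] \le 0$; then Theorem \ref{Nonpositive energy} immediately gives $\ell_{cv_0} \le \|u_0^{(c)}\|_{\mathcal{F}\dot H^{\frac12}} \lesssim_{v_0} c^{\frac12}$, which is the claim. The natural guess is to take $u_0^{(c)} = \mu_c\, u_\ast$ for a fixed profile $u_\ast$ and a real scaling parameter $\mu_c$ to be chosen; then
\[
E[\mu_c u_\ast, c v_0] = \mu_c^2 \int_{\mathbb{R}^3}|\nabla u_\ast|^2\,dx + \frac{c^2}{2}\int_{\mathbb{R}^3}|\nabla v_0|^2\,dx - 2\mu_c^2 c \int_{\mathbb{R}^3}\Re\!\(u_\ast^2\overline{v_0}\)dx .
\]
The term linear in $c$ inside the potential energy (after multiplying through, quadratic in $\mu_c$ and linear in $c$) is the one that must overwhelm the two nonnegative kinetic terms as $c\to\infty$.

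First I would choose the profile $u_\ast$ so that $\int \Re(u_\ast^2 \overline{v_0})\,dx > 0$. This is where a small amount of care is needed: $v_0$ is an arbitrary nonzero element of $\mathcal{F}\dot H^{\frac12}\cap H^1$, possibly complex-valued, so one cannot simply take $u_\ast$ real. A clean choice is to pick a point $x_0$ where (a mollification of) $v_0$ is nonzero, write $v_0(x_0) = |v_0(x_0)|e^{i\theta_0}$, and take $u_\ast$ to be a smooth compactly supported bump near $x_0$ multiplied by $e^{i\theta_0/2}$; by continuity the integral $\int \Re(u_\ast^2\overline{v_0})\,dx$ is then close to $|v_0(x_0)|\int|u_\ast|^2\,dx>0$ provided the support is small. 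Equivalently, and perhaps more robustly, one takes $u_\ast(x)=e^{i\vartheta(x)/2}\chi(x)$ where $e^{i\vartheta}$ is the phase of $v_0$ on the set $\{v_0\neq0\}$ and $\chi$ is a suitable cutoff, so that $u_\ast^2\overline{v_0}=|v_0|\chi^2 \ge 0$ pointwise; since $v_0\neq0$ this integral is strictly positive, and $u_\ast\in \mathcal{F}\dot H^{\frac12}\cap H^1$ as a bounded compactly supported function with, say, Lipschitz phase — if the phase is not regular enough one falls back on the bump-near-$x_0$ argument, which only needs $v_0$ to be approximately constant on a small ball, true after mollifying.

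With $u_\ast$ fixed, set $A := \int|\nabla u_\ast|^2$, $B := \int \Re(u_\ast^2\overline{v_0}) > 0$, and $D := \int|\nabla v_0|^2$, so that $E[\mu_c u_\ast, c v_0] = \mu_c^2 A + \tfrac{c^2}{2}D - 2\mu_c^2 c B = \mu_c^2(A - 2cB) + \tfrac{c^2}{2}D$. For $c > A/B$ the coefficient $A-2cB$ is negative, so the choice $\mu_c^2 := \dfrac{c^2 D/2}{2cB - A}$ makes $E[\mu_c u_\ast, c v_0] = 0$; for $c \ge c_0 := 2A/B$ one has $2cB - A \ge cB$, hence $\mu_c^2 \le \dfrac{cD}{2B}$, i.e. $\|u_0^{(c)}\|_{\mathcal{F}\dot H^{\frac12}}^2 = \mu_c^2\|u_\ast\|_{\mathcal{F}\dot H^{\frac12}}^2 \le \dfrac{D\|u_\ast\|_{\mathcal{F}\dot H^{\frac12}}^2}{2B}\,c$, which is exactly $\ell_{cv_0}^2 \lesssim_{v_0} c$. (If $D=0$, i.e. $\nabla v_0 \equiv 0$, which cannot happen for a nonzero $H^1$ function on $\mathbb{R}^3$, so this degenerate case does not arise.) Then Theorem \ref{Nonpositive energy}, applied with the nontrivial pair $(u_0^{(c)}, cv_0) \in (\mathcal{F}\dot H^{\frac12}\cap H^1)^2$ of nonpositive (indeed zero) energy, gives that the corresponding solution does not scatter, so $\ell_{cv_0} \le \|u_0^{(c)}\|_{\mathcal{F}\dot H^{\frac12}} \lesssim_{v_0} c^{1/2}$ for all $c \ge c_0$, completing the proof.

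The only genuinely delicate point is the construction of $u_\ast$ with $\int\Re(u_\ast^2\overline{v_0})\,dx>0$ while keeping $u_\ast \in \mathcal{F}\dot H^{\frac12}\cap H^1$; everything after that is the elementary one-variable optimization in $\mu_c$ above. I would present the bump-near-a-Lebesgue-point construction as the main argument, since it needs nothing about $v_0$ beyond $v_0\neq0$ in $H^1\subset L^2$, and remark that the strict positivity and the membership $u_\ast\in\mathcal{F}\dot H^{\frac12}$ (a compactly supported $L^\infty$ function lies in $\mathcal{F}\dot H^s$ for all $s\in[0,\tfrac32)$) are both immediate.
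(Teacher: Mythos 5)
Your proposal is correct, and the mechanism is the same as the paper's: produce a pair $(\mu\, u_\ast, c v_0)$ with nonpositive energy by letting the cubic interaction term (quadratic in $\mu$, linear in $c$ after rescaling) beat the two kinetic terms, then invoke Theorem \ref{Nonpositive energy} to get $\ell_{cv_0}\le \mu_c\norm{u_\ast}_{\mathcal{F}\dot{H}^{1/2}}\lesssim_{v_0}c^{1/2}$. The only difference is the choice of test profile. The paper takes $u_0=v_0^{1/2}|v_0|^{1/2}$, i.e.\ the ``half-phase'' of $v_0$ itself, so that $u_0^2\overline{v_0}=|v_0|^3\ge0$ pointwise and one gets the explicit bound $\ell_{cv_0}\le c^{1/2}\norm{v_0}_{\mathcal{F}\dot{H}^{1/2}}\norm{\nabla v_0}_{L^2}\norm{v_0}_{L^3}^{-3/2}$; this relies on $|u_0|=|v_0|$ and the pointwise inequality $|\nabla u_0|\le|\nabla v_0|$, and implicitly on the complex square root admitting a measurable branch with $u_0\in H^1$, which is exactly the regularity issue you flag. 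Your bump-near-a-Lebesgue-point construction sidesteps that issue entirely (it uses only $v_0\neq0$ in $L^2$), at the cost of a less explicit implicit constant; either way the one-variable optimization in $\mu_c$ is carried out correctly and the degenerate case $D=0$ is rightly dismissed. Both proofs are valid.
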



The second one is criterion for a specific $v_0$:

\begin{corollary}\label{Sufficient condition of finite ell}
Pick $v_0 \in \mathcal{F} \dot{H}^{\frac12} \cap H^1$.
If there exists $\theta \in \mathbb{R}$ such that a Schr\"odinger operator $-\Delta - 2 \text{Re}(e^{i\theta} v_0)$ has 
a negative eigenvalue then $\ell_{v_0}<\infty$. 
Moreover, if $\varphi \in \mathcal{F}\dot{H}^\frac{1}{2}\cap H^1$
is a real-valued eigenfunction 
associated with a negative eigenvalue $\tilde{e}<0$ of $-\Delta - 2 \text{Re}(e^{i\theta} v_0)$
then the estimate
\[
	\ell_{v_0} \le \frac{\norm{\varphi}_{\mathcal{F}\dot{H}^{\frac12}}}{\sqrt{2|\tilde{e}|} \norm{\varphi}_{L^2}} \norm{\nabla v_0}_{L^2}
\]
is true.
\end{corollary}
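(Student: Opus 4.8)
The plan is to deduce both assertions from Theorem~\ref{Nonpositive energy}: it suffices to construct, from the spectral hypothesis, a nontrivial $u_0\in\mathcal{F}\dot{H}^{1/2}\cap H^1$ with $E[u_0,v_0]\le0$, because the quantitative bound is then nothing but the inequality $\ell_{v_0}\le\|u_0\|_{\mathcal{F}\dot{H}^{1/2}}$ recorded in that theorem, provided $u_0$ is chosen efficiently.

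First I would fix the ansatz $u_0=\mu e^{-i\theta/2}\varphi$, with $\varphi$ the given real-valued eigenfunction and $\mu\in\mathbb{R}$ a free parameter. The role of the phase $e^{-i\theta/2}$ is that, since $\varphi$ is real, $u_0^2\overline{v_0}=\mu^2e^{-i\theta}\varphi^2\overline{v_0}$, hence $\Re(u_0^2\overline{v_0})=\mu^2\varphi^2\,\Re(e^{i\theta}v_0)$ (using $\Re\bar z=\Re z$); thus the cross term of $E$ equals exactly $\mu^2$ times the potential term of the Schr\"odinger operator $-\Delta-2\Re(e^{i\theta}v_0)$. Pairing the eigenvalue equation $-\Delta\varphi-2\Re(e^{i\theta}v_0)\varphi=\tilde{e}\varphi$ with $\varphi$ gives $\|\nabla\varphi\|_{L^2}^2-2\int_{\mathbb{R}^3}\Re(e^{i\theta}v_0)\varphi^2\,dx=\tilde{e}\|\varphi\|_{L^2}^2$, so that
\[
E[u_0,v_0]=\mu^2\tilde{e}\,\|\varphi\|_{L^2}^2+\tfrac12\|\nabla v_0\|_{L^2}^2 .
\]
Since $\tilde{e}<0$, I would then take $\mu^2=\|\nabla v_0\|_{L^2}^2/(2|\tilde{e}|\,\|\varphi\|_{L^2}^2)$, which makes $E[u_0,v_0]=0$; here $v_0\neq0$ (otherwise the operator is $-\Delta$, which has no negative eigenvalue) and $\|\nabla v_0\|_{L^2}>0$ (no nonzero constant lies in $L^2(\mathbb{R}^3)$), so $\mu\neq0$ and $u_0\neq0$. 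Theorem~\ref{Nonpositive energy} now applies and yields
\[
\ell_{v_0}\le\|u_0\|_{\mathcal{F}\dot{H}^{1/2}}=|\mu|\,\|\varphi\|_{\mathcal{F}\dot{H}^{1/2}}=\frac{\|\varphi\|_{\mathcal{F}\dot{H}^{1/2}}}{\sqrt{2|\tilde{e}|}\,\|\varphi\|_{L^2}}\,\|\nabla v_0\|_{L^2},
\]
which is the asserted estimate, and in particular $\ell_{v_0}<\infty$ whenever such a $\varphi\in\mathcal{F}\dot{H}^{1/2}\cap H^1$ exists.

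For the first, purely qualitative assertion only the existence of a negative eigenvalue is assumed, and its eigenfunction $\varphi$ a priori belongs only to $H^1$, not to the homogeneous weighted space $\mathcal{F}\dot{H}^{1/2}$. Here I would argue by density: setting $Q(\psi):=\|\nabla\psi\|_{L^2}^2-2\int_{\mathbb{R}^3}\Re(e^{i\theta}v_0)|\psi|^2\,dx$, the eigenfunction satisfies $Q(\varphi)=\tilde{e}\|\varphi\|_{L^2}^2<0$, and since $Q(\varphi)=Q(\Re\varphi)+Q(\Im\varphi)$ one may fix a real-valued $\psi\in H^1$ with $Q(\psi)<0$. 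Because $v_0\in H^1(\mathbb{R}^3)\hookrightarrow L^3(\mathbb{R}^3)$ the functional $Q$ is continuous on $H^1$, so approximating $\psi$ in $H^1$ by $\psi_n\in C_c^\infty(\mathbb{R}^3;\mathbb{R})\subset\mathcal{F}\dot{H}^{1/2}\cap H^1$ gives $Q(\psi_n)<0$ for $n$ large. Repeating the computation of the previous paragraph with $\psi_n$ in place of $\varphi$ yields $E[u_0,v_0]=\mu^2Q(\psi_n)+\tfrac12\|\nabla v_0\|_{L^2}^2$, which is $\le0$ once $\mu$ is large; Theorem~\ref{Nonpositive energy} again forces $\ell_{v_0}\le\|u_0\|_{\mathcal{F}\dot{H}^{1/2}}<\infty$.

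Every computation above is elementary, so the single point requiring care is this last one: the eigenfunction need not lie in $\mathcal{F}\dot{H}^{1/2}$, and this is the reason I would invoke the density argument rather than the eigenfunction itself for the finiteness claim. Alternatively, one may use the exponential decay of $L^2$-eigenfunctions at strictly negative energy to place $\varphi$ in $\mathcal{F}\dot{H}^{1/2}$ directly, in which case the quantitative bound holds with the eigenfunction itself.
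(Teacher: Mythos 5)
Your proof is correct and follows essentially the same route as the paper: the ansatz $u_0=c\,e^{-i\theta/2}\varphi$, the pairing of the eigenvalue equation with $\varphi$ to rewrite the cross term of $E$, the choice of $c$ making $E[cu_0,v_0]=0$, and the appeal to Theorem \ref{Nonpositive energy}. Your additional density argument for the purely qualitative assertion (replacing an eigenfunction that is a priori only in $H^1$ by a nearby real $C_c^\infty$ function with negative quadratic form) is a legitimate refinement of a point the paper's proof passes over silently, but it does not change the substance of the argument.
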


\begin{remark}
The estimate given in
Corollary \ref{Sufficient condition of finite ell} is scaling invariant.
Indeed, if $\varphi(x)$ is an eigenfunction of $-\Delta - 2\Re (e^{i\theta} v_0 )$
associated with a negative eigenvalue $\tilde{e}$ then
$\varphi_{\{\lambda\}}(x)$ is an eigenfunction of $-\Delta - 2\Re (e^{i\theta} (v_0)_{\{\lambda\}} )$
and the corresponding eigenvalue is $\lambda^2 \tilde{e}$, where $f_{\{\lambda\}}$ denotes the scaling of $f$ defined in \eqref{002}.
\end{remark}

\begin{remark}
It is possible to study the optimizing problem \eqref{E:systemlc}.
Let us formulate in an abstract setting.
Let $f(x,y)$ be a function on $[0,\infty)\times [0,\infty)$ such that it is continuous and strictly increasing with respect to the both variables and that $f(0,0)=0$. 
We define
\begin{equation}\label{E:definition of lf}
	\ell_f := \inf \{ f(\norm{u_0}_{\mathcal{F} \dot{H}^{\frac12} },\norm{v_0}_{\mathcal{F} \dot{H}^{\frac12} }) \ :\ (u_0,v_0) \not \in \mathcal{S}_+ \}.
\end{equation}
Then, we have
the relation 
\[
	\ell_f = \inf_{v_0 \in \mathcal{F} \dot{H}^{\frac12}} f(\ell_{v_0}, \norm{v_0}_{\mathcal{F} \dot{H}^{\frac12}})
	=\inf_{v_0 \in \mathcal{F} \dot{H}^{\frac12}} f(\ell_{v_0}^\dagger, \norm{v_0}_{\mathcal{F} \dot{H}^{\frac12}}).
\]
Moreover,
there exists a minimizer, say $(u_{0,f}, v_{0,f})$, to $\ell_f$.
The minimizer satisfies $\ell_{v_{0,f}}^\dagger = \ell_{v_{0,f}}$,
and $u_{0,f}$ is a minimizer to $\ell_{v_{0,f}}$, i.e., $\ell_{v_{0,f}} = \norm{u_{0,f}}_{\mathcal{F} \dot{H}^{\frac12} }$.
(See Theorem \ref{T:ellfminimizer} for the details.)
Intuitively, this implies the following: Let us consider the level set $\Omega_r := \{(u_0,v_0): f(\norm{u_0}_{\mathcal{F} \dot{H}^{\frac12} },\norm{v_0}_{\mathcal{F} \dot{H}^{\frac12} }) \le r\}$. Then $\Omega_0 = \{(0,0)\}$ and, as the ``radius" $r$ increases from zero, $\Omega_{r}$ first contains a non-scattering initial data
exactly at it touches the curve $v_0 \mapsto \ell_{v_0}^\dagger$.
And it touches the curve $v_0 \mapsto \ell_{v_0}$ at the same point.
We would emphasize that it is true for any choice of $f$.
It can be said that a function $v_0$ such that $\ell_{v_0}^\dagger < \ell_{v_0}$ is, if exists, never found as a minimizer to a minimizing problem of the type \eqref{E:definition of lf}.
Further, for any choice of $f$, the ground state solution is not a minimizer, as mentioned above.
\end{remark}

The rest of the paper is organized as follows.
In Section \ref{Preliminary}, we collect some notations and inequalities.
Then, we define function spaces.
In Section \ref{Local well-posedness and Stability}, we prove local well-posedness for \eqref{NLS} and give a necessary and sufficient condition for scattering.
Then, we check that the solutions to \eqref{NLS} with nonpositive energy does not scatter (Theorem \ref{Nonpositive energy}).
In Section \ref{Properties of L and l}, we investigate properties of $L_{v_0}^\dagger$ and $\ell_{v_0}^\dagger$.
In Section \ref{Sec:Linear profile decomposition}, we obtain linear profile decomposition theorem (Theorem \ref{Linear profile decomposition}).
In Section \ref{Proof of main theorem}, we prove Theorem \ref{T:l0}, Theorem \ref{T:case1}, and Theorem \ref{T:case2} and consider the optimizing problem $\ell_f$.
In Scetion \ref{Proof of corollaries}, we prove corollaries of Theorem \ref{Nonpositive energy}.

\section{Preliminary}\label{Preliminary}

In this section, we prepare some notations and estimates used throughout the paper.

\subsection{Notations}

For non-negative $X$ and $Y$, we write $X\lesssim Y$ to denote $X\leq CY$ for a constant $C>0$.
If $X\lesssim Y\lesssim X$, we write $X\sim Y$.
The dependence of implicit constants on parameters will be indicated by subscripts when necessary, e.g. $X\lesssim_uY$ denotes $X\leq CY$ for some $C=C(u)$.
We write $a'\in[1,\infty]$ to denote the H\"older conjugate to $a\in[1,\infty]$, that is, $\frac{1}{a}+\frac{1}{a'}=1$ holds.
For $s\in\mathbb{R}$, the operator $|\nabla|^s$ is defined as the Fourier multiplier operator with multiplier $|\xi|^s$, that is, $|\nabla|^s=\mathcal{F}^{-1}|\xi|^s\mathcal{F}$.
For a set $A \in \mathbb{R}^d$, ${\bf 1}_A(x)$ stands for the characteristic function of $A$. 

We recall the standard Littlewood-Paley projection operators.
Let $\phi$ be a radial cut-off function satisfies ${\bf 1}_{\{ |\xi| \le 4/3\}} \le \phi \le {\bf 1}_{\{ |\xi| \le 5/3\}}$.
For $N\in2^\mathbb{Z}$, the operators $P_N$ is defined as
\begin{equation*}
	\widehat{P_Nf}(\xi):=\widehat{f_N}(\xi):=\psi_N(\xi)\widehat{f}(\xi),
\end{equation*}
where $\phi_N(x) = \phi(x/N)$ and
\begin{align}
\psi_N(x)=\phi_N(x)-\phi_{N/2}(x). \label{069}
\end{align}

\subsection{The Galilean transform and the Galilean operator}

The identities
\begin{equation}
\begin{split}
[e^{it\Delta}(e^{ix\cdot\xi_0}f)](x)
	&=e^{-it|\xi_0|^2+ix\cdot\xi_0}(e^{it\Delta}f)(x-2t\xi_0),\\[0.1cm]
[e^{\frac{1}{2}it\Delta}(e^{2ix\cdot\xi_0}f)](x)
	&=e^{-2it|\xi_0|^2+2ix\cdot\xi_0}(e^{\frac{1}{2}it\Delta}f)(x-2t\xi_0) \label{008}
\end{split}
\end{equation}
imply that the class of solutions to the linear Schr\"odinger equation is invariant under Galilean transform:
\begin{align}
(u(t,x),v(t,x))\mapsto(e^{-it|\xi_0|^2+ix\cdot\xi_0}u(t,x-2\xi_0t),e^{-2it|\xi_0|^2+2ix\cdot\xi_0}v(t,x-2\xi_0t)),\ \ \ \xi_0\in\mathbb{R}^3.\label{009}
\end{align}
The invariance is inherited in the nonlinear equation \eqref{NLS}.

The Galilean operator 
\[
	J_m(t):=x+i\frac{t}{m}\nabla,
\]
which is a multiple of the infinitesimal operator for transforms appearing in \eqref{009},
plays an important role in the scattering theory for mass-subcritical nonlinear Schr\"odinger equation.

We define the multiplication operator
\begin{align}
[\mathcal{M}_m(t)f](x):=e^\frac{im|x|^2}{2t}f(x)\ \ (t\neq0) \label{071}
\end{align}
and the dilation operator
\begin{align}
[\mathcal{D}(t)f](x):=(2it)^{-\frac{3}{2}}f\left(\frac{x}{2t}\right)\ \ (t\neq0). \label{072}
\end{align}
It is well known that the Schr\"odinger group is factorized as $e^{it\Delta}=\mathcal{M}_\frac{1}{2}(t)\mathcal{D}(t)\mathcal{F}\mathcal{M}_\frac{1}{2}(t)$ by using these operators.
This factorization deduces the identity
\begin{align}
e^{it\Delta}\Phi(x)e^{-it\Delta}
	=\mathcal{M}_\frac{1}{2}(t)\Phi(2it\nabla)\mathcal{M}_\frac{1}{2}(-t) \label{007}
\end{align}
for suitable multiplier $\Phi$, where $\Phi(i\nabla)$ denotes the Fourier multiplier operator with multiplier $\Phi(\xi)$, that is, $\Phi(i\nabla):=\mathcal{F}^{-1}\Phi(\xi)\mathcal{F}$.
The Galilean operator is written as follows:
\begin{align*}
J_m(t)
	=e^{\frac{1}{2m}it\Delta}xe^{-\frac{1}{2m}it\Delta}
	=\mathcal{M}_m(t)i\frac{t}{m}\nabla \mathcal{M}_m(-t),
\end{align*}
where the second equality holds for $t\neq0$.
We define a fractional power of $J_m$ by
\begin{align}
J_m^s(t):=e^{\frac{1}{2m}it\Delta}|x|^se^{-\frac{1}{2m}it\Delta}=\mathcal{M}_m(t)\left(-\frac{t^2}{m^2}\Delta\right)^\frac{s}{2}\mathcal{M}_m(-t)\ \ \text{ for }\ \ s\in\mathbb{R}. \label{070}
\end{align}
Remark that the second formula is valid for $t\neq0$.


\subsection{Function spaces}\label{subsec:Function spaces}

We define a time-dependent spaces $\dot{X}_m^{s,r}=\dot{X}_m^{s,r}(t)$ by using the norm
\begin{align}
\|f\|_{\dot{X}_m^{s,r}}:=\|J_m^s(t)f\|_{L_x^r(\mathbb{R}^3)}\sim\||t|^s|\nabla|^s\mathcal{M}_m(-t)f\|_{L_x^r(\mathbb{R}^3)}. \label{010}
\end{align}
When $r=2$, we omit it, that is, $\dot{X}_m^s=\dot{X}_m^{s,2}$.
We can see immediately by the definition of $J_m^s$ that the equivalence of norms in \eqref{010} for $t\neq0$.
It is natural to write
\begin{align*}
f\in e^{\frac{1}{2m}it \Delta}\mathcal{F}\dot{H}^s\Longleftrightarrow e^{-\frac{1}{2m}it \Delta}f\in\mathcal{F}\dot{H}^s.
\end{align*}
Then, we have a change of notation\,:\,$e^{\frac{1}{2m}it\Delta}\mathcal{F}\dot{H}^s=\dot{X}_m^s(t)$ by using this description.

We use Lorentz-modified space-time norms.
For an interval $I$, $1\leq q<\infty$, and $1\leq\alpha\leq\infty$, the Lorentz space $L_t^{q,\alpha}(I)$ is defined by using the quasi-norm
\begin{align*}
	\|f\|_{L_t^{q,\alpha}(I)}
		:=\|\lambda|\{t\in I:|f(t)|>\lambda\}|^\frac{1}{q}\|_{L^\alpha((0,\infty),\frac{d\lambda}{\lambda})}.
\end{align*}
For a Banach space $X$, $L_t^{q,\alpha}(I;X)$ is defined as the whole of functions $u:I\times\mathbb{R}^3\longrightarrow\mathbb{C}$ satisfying
\begin{align*}
\|u\|_{L_t^{q,\alpha}(I;X)}:=\left\|\|u(t)\|_{X}\right\|_{L_t^{q,\alpha}(I)}<\infty.
\end{align*}
The following equivalence is useful:
\begin{align*}
\|f\|_{L_t^{q,\alpha}(I)}\sim\|\|f\cdot1_{\{2^{k-1}\leq|f|\leq2^k\}}\|_{L_t^q(I)}\|_{\ell^\alpha(k\in\mathbb{Z})}.
\end{align*}

\subsection{Strichartz estimates}
The standard Strichartz estimates for $e^{it\Delta}$ were proved in \cite{GinVel92,KeeTao98,Str77}.
We also need Strichartz estimates for the spaces $L_t^{q,\alpha}\dot{X}_m^{s,r}$, which were proved in \cite{Mas16,NakOza02}.

\begin{definition}
If a pair $(q,r)$ satisfies
\begin{align*}
2<q<\infty,\ \ \ 2<r<6,\ \ \text{ and }\ \ \frac{2}{q}+\frac{3}{r}=\frac{3}{2},
\end{align*}
then $(q,r)$ is an admissible pair.
\end{definition}
Remark that we do not included two end points $(\infty,2)$ and $(2,6)$ to admissible pairs.
It is because they require exceptional treatments sometimes.

\begin{proposition}[Strichartz estimates, \cite{NakOza02}]\label{Strichartz estimates}
Let $s\geq0$ and $t_0\in I\subset\mathbb{R}$.
\begin{itemize}
\item[(1)] For any admissible pair $(q,r)$, we have
\begin{align*}
\|e^{it\Delta}f\|_{L_t^\infty\dot{X}_m^s\cap L_t^{q,2}\dot{X}_m^{s,r}}\lesssim\|f\|_{\mathcal{F}\dot{H}^s}.
\end{align*}
\item[(ii)] For any admissible pairs $(q,r)$ and $(\alpha,\beta)$, we have
\begin{align*}
\left\|\int_{t_0}^te^{i(t-s)\Delta}F(s)ds\right\|_{L_t^\infty(I;\dot{X}_m^s)\cap L_t^{q,2}(I;\dot{X}_m^{s,r})}\lesssim\|F\|_{L_t^{\alpha',2}(I;\dot{X}_m^{s,\beta'})}.
\end{align*}
\end{itemize}
\end{proposition}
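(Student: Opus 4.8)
The plan is to reduce these weighted estimates to the classical (Lorentz-refined) Strichartz estimates for the free Schr\"odinger group, removing the weight $|x|^s$ by an algebraic identity for the Galilean operator. The starting point is the factorization \eqref{070}, $J_m^s(t)=e^{\frac{1}{2m}it\Delta}|x|^se^{-\frac{1}{2m}it\Delta}$. Composing with the free group $e^{\frac{1}{2m}it\Delta}$ (which is $e^{it\Delta}$ when $m=\frac12$) gives the commutation relation
\begin{align*}
	J_m^s(t)\,e^{\frac{1}{2m}it\Delta}=e^{\frac{1}{2m}it\Delta}\,|x|^s,
\end{align*}
and, since $|x|^s$ is a multiplication operator and hence commutes through the time integral, the analogous identity
\begin{align*}
	J_m^s(t)\int_{t_0}^t e^{\frac{1}{2m}i(t-\sigma)\Delta}F(\sigma)\,d\sigma
	=\int_{t_0}^t e^{\frac{1}{2m}i(t-\sigma)\Delta}\bigl(J_m^s(\sigma)F(\sigma)\bigr)\,d\sigma
\end{align*}
for the Duhamel term. (One may further reduce to $m=\frac12$ by a time rescaling, using $J_m^s(t)=J_{1/2}^s(t/(2m))$, equivalently $\dot{X}_m^{s,r}(t)=\dot{X}_{1/2}^{s,r}(t/(2m))$; the Strichartz estimates for $e^{\frac{1}{2m}it\Delta}$ are those for $e^{it\Delta}$ after the same change of variable.) Recalling the definition \eqref{010} of the $\dot{X}_m^{s,r}$ norm, these two identities turn statements (1) and (ii) into the plain unweighted estimates
\begin{align*}
	\norm{e^{\frac{1}{2m}it\Delta}g}_{L_t^\infty L_x^2\cap L_t^{q,2}L_x^r}&\lesssim\norm{g}_{L_x^2},\\
	\Bigl\|\int_{t_0}^t e^{\frac{1}{2m}i(t-\sigma)\Delta}G(\sigma)\,d\sigma\Bigr\|_{L_t^\infty(I;L_x^2)\cap L_t^{q,2}(I;L_x^r)}&\lesssim\norm{G}_{L_t^{\alpha',2}(I;L_x^{\beta'})},
\end{align*}
applied with $g=|x|^sf$ and $G(\sigma)=J_m^s(\sigma)F(\sigma)$; here $\norm{g}_{L_x^2}=\norm{f}_{\mathcal{F}\dot{H}^s}$ and $\norm{G(\sigma)}_{L_x^{\beta'}}=\norm{F(\sigma)}_{\dot{X}_m^{s,\beta'}(\sigma)}$ directly from the definitions of $\mathcal{F}\dot{H}^s$ and of $\dot{X}_m^{s,\beta'}$.

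The second step is the classical estimates. The $L_t^\infty L_x^2$ components are just the $L^2$-conservation of the free group and, for the inhomogeneous term, its dual (``energy'') estimate obtained from the homogeneous Strichartz estimate by duality. The $L_t^{q,2}L_x^r$ components are the Lorentz-refined Strichartz estimates: interpolating the dispersive bound $\norm{e^{it\Delta}}_{L_x^1\to L_x^\infty}\lesssim|t|^{-3/2}$ against unitarity on $L_x^2$ and running the $TT^{*}$ / bilinear-form argument of Keel--Tao \cite{KeeTao98} yields the bound with the refined time exponent $L_t^{q,2}$; since admissible pairs are defined to exclude the endpoints $(\infty,2)$ and $(2,6)$, everything stays in the non-endpoint range. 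For the inhomogeneous estimate with two (possibly different) admissible pairs, one combines the homogeneous estimate and its dual via the Christ--Kiselev lemma to pass from the estimate for the integral over all of $I$ to the one for the retarded integral $\int_{t_0}^t$; this is admissible because $\alpha'<2<q$ for admissible $(\alpha,\beta)$ and $(q,r)$. This yields both (1) and (ii), which are the Lorentz-refined versions of \cite{Mas16,NakOza02}.

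The argument is short because the weight is handled purely algebraically in the first step; the only point needing real care is the second step --- securing the Lorentz time exponent $L_t^{q,2}$ (rather than merely $L_t^q$), which forces one to cite or re-derive the refined Strichartz estimate, and verifying that the exclusion of the endpoints $(\infty,2)$ and $(2,6)$ makes the Christ--Kiselev step in the inhomogeneous estimate unconditional.
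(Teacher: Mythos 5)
The paper offers no proof of this proposition at all --- it simply cites \cite{NakOza02} (and \cite{Mas16}) --- so you are supplying the argument, and what you give is exactly the standard proof from those references: conjugate the weight through the flow via $J_m^s(t)e^{\frac{1}{2m}it\Delta}=e^{\frac{1}{2m}it\Delta}|x|^s$ (and its Duhamel analogue), thereby reducing to the unweighted Lorentz-refined Strichartz estimates, which in turn come from the Keel--Tao bilinear argument plus Christ--Kiselev for the retarded integral with two different admissible pairs. The algebra in your first step is correct, the identifications $\||x|^sf\|_{L^2}=\|f\|_{\mathcal{F}\dot{H}^s}$ and $\|J_m^s(\sigma)F(\sigma)\|_{L_x^{\beta'}}=\|F(\sigma)\|_{\dot{X}_m^{s,\beta'}(\sigma)}$ are immediate from the definitions, and you correctly isolate the only two points of real content (securing the exponent $2$ in the Lorentz time norm, and the applicability of Christ--Kiselev in the non-endpoint range $\alpha'<2<q$, which does extend to Lorentz-valued norms). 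One remark worth recording: as printed, the proposition pairs the propagator $e^{it\Delta}$ with $\dot{X}_m^{s,r}$ for \emph{arbitrary} $m$, but the commutation identity --- and indeed the estimate itself --- requires the matched propagator $e^{\frac{1}{2m}it\Delta}$ (already for $s=1$ one has $J_m(t)e^{it\Delta}f=e^{it\Delta}(xf)+it(\tfrac{1}{m}-2)e^{it\Delta}\nabla f$, and the second term is not controlled by $\|xf\|_{L^2}$ uniformly in $t$ when $m\neq\tfrac12$). You implicitly read the statement in the matched form, which is the version that is true and the only version the paper ever uses ($e^{it\Delta}$ with $\dot{X}_{1/2}$, $e^{\frac12 it\Delta}$ with $\dot{X}_1$); it would be worth saying this explicitly, but it is a defect of the statement rather than of your proof.
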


\subsection{Specific function spaces}\label{ss:space}

Throughout this paper, we use the following concrete choice of function spaces.
The same exponents were used in \cite{KilMasMurVis17,Mas15}.

We define
\begin{align*}
\left(\frac{1}{q_1},\frac{1}{r_1}\right)
	:=\left(\frac{1}{6},\frac{7}{18}\right),\ \ \ 
\left(\frac{1}{\widetilde{q}},\frac{1}{\widetilde{r}}\right)
	:=\left(\frac{2}{3},\frac{2}{9}\right).
\end{align*}
The pair $(q_1,r_1)$ is admissible. The pair $(\tilde{q},\tilde{r})$ satisfies the critical scaling relation
$\frac{2}{\widetilde{q}}+\frac{3}{\widetilde{r}}=2$, and is not a admissible pair.
These exponents satisfy the following relations.
\begin{align}
\frac{1}{q_1'}=\frac{1}{\widetilde{q}}+\frac{1}{q_1},\ \ \ \frac{1}{r_1'}=\frac{1}{\widetilde{r}}+\frac{1}{r_1},\ \ \ \frac{1}{\widetilde{q}}-\frac{1}{q_1}=\frac{3}{r_1}-\frac{3}{\widetilde{r}}=\frac{1}{2}.\label{011}
\end{align}
We define the spaces
\begin{align*}
	S^\text{weak}
	:=L_t^{\widetilde{q},\infty}L_x^{\widetilde{r}}=L_t^{\frac{3}{2},\infty}L_x^\frac{9}{2},\ \ \ 
	S
	:=L_t^{\widetilde{q},2}L_x^{\widetilde{r}}=L_t^{\frac{3}{2},2}L_x^\frac{9}{2},\ \ \ 
	W_j
	:=L_t^{q_1,2}\dot{X}_{2^{{\,}^{j-2}}}^{\frac{1}{2},r_1}=L_t^{6,2}\dot{X}_{2^{{\,}^{j-2}}}^{\frac{1}{2},\frac{18}{7}}
\end{align*}
for the solutions and the spaces
\begin{align*}
N_j
	:=L_t^{q_1',2}\dot{X}_{2^{{\,}^{j-2}}}^{\frac{1}{2},r_1'}
	=L_t^{\frac{6}{5},2}\dot{X}_{2^{{\,}^{j-2}}}^{\frac{1}{2},\frac{18}{11}}
\end{align*}
for the nonlinear terms.
We use a notation $S^\text{weak}(I)$ to indicate that the norm is taken over the space-time slab $I\times\mathbb{R}^3$, and similarly for the other spaces.

\subsection{Some estimates} In this section, we collect some estimates.
They easily follow as in \cite{Mas16} (see also \cite{KilMasMurVis17}).

\begin{lemma}[Embeddings]\label{Embeddings}
The following inequalities hold.
\begin{align*}
\|u\|_{S^\text{weak}}\lesssim\|u\|_{S}\lesssim_j \|u\|_{W_j},
\end{align*}
where $j=1,2$.
\end{lemma}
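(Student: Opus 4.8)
The statement to prove is Lemma \ref{Embeddings}, the embeddings $\|u\|_{S^{\mathrm{weak}}} \lesssim \|u\|_S \lesssim_j \|u\|_{W_j}$.

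\medskip

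The plan is to prove the two inequalities separately. The first one, $\|u\|_{S^{\mathrm{weak}}} \lesssim \|u\|_S$, is immediate from the general nesting of Lorentz spaces in the time variable: since $S^{\mathrm{weak}} = L_t^{\widetilde q,\infty}L_x^{\widetilde r}$ and $S = L_t^{\widetilde q,2}L_x^{\widetilde r}$ with $2 \le \infty$, one has $\|F\|_{L_t^{\widetilde q,\infty}} \lesssim \|F\|_{L_t^{\widetilde q,2}}$ applied to $F(t) = \|u(t)\|_{L_x^{\widetilde r}}$. So this reduces to citing the standard Lorentz space embedding $L^{q,\alpha} \hookrightarrow L^{q,\beta}$ for $\alpha \le \beta$ (at fixed $q$).

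\medskip

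For the second inequality $\|u\|_S \lesssim_j \|u\|_{W_j}$, recall $S = L_t^{\widetilde q, 2} L_x^{\widetilde r}$ and $W_j = L_t^{q_1,2}\dot X_{m_j}^{1/2,r_1}$ with $m_j = 2^{j-2}$. First I would unpack the $\dot X_m^{s,r}$-norm using \eqref{010}: $\|f\|_{\dot X_m^{1/2,r_1}} \sim \| |t|^{1/2}|\nabla|^{1/2}\mathcal{M}_m(-t)f\|_{L_x^{r_1}}$. The key point is that, for each fixed $t\neq 0$, multiplication by the unimodular factor $\mathcal{M}_m(-t)$ does not change any $L_x^p$ norm, so it suffices to compare $\|u(t)\|_{L_x^{\widetilde r}}$ with $|t|^{1/2}\| |\nabla|^{1/2} (\mathcal M_m(-t) u(t))\|_{L_x^{r_1}}$ pointwise in $t$, and then to compare the resulting time-Lorentz norms. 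By the Sobolev embedding $\dot W^{1/2,r_1} \hookrightarrow L^{\widetilde r}$ — valid because $\frac{1}{r_1} - \frac{1}{2\cdot 3} = \frac{7}{18} - \frac{1}{6} = \frac{2}{9} = \frac{1}{\widetilde r}$, which is exactly the relation $\frac{3}{r_1} - \frac{3}{\widetilde r} = \frac12$ recorded in \eqref{011} — we get $\|u(t)\|_{L_x^{\widetilde r}} \lesssim \| |\nabla|^{1/2}(\mathcal M_m(-t)u(t))\|_{L_x^{r_1}} \sim |t|^{-1/2}\|u(t)\|_{\dot X_m^{1/2,r_1}}$. Hence in the time variable we must bound $\| |t|^{-1/2} g(t) \|_{L_t^{\widetilde q,2}}$ by $\|g\|_{L_t^{q_1,2}}$ where $g(t) = \|u(t)\|_{\dot X_m^{1/2,r_1}}$; this follows from Hölder's inequality in Lorentz spaces, since $|t|^{-1/2} \in L_t^{2,\infty}$ and $\frac{1}{\widetilde q} = \frac12 + \frac{1}{q_1}$ (again from \eqref{011}), with the second Lorentz exponents combining as $\frac{1}{2} \le \frac{1}{\infty} + \frac{1}{2}$. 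This gives the claim with an implicit constant depending on $m_j$, i.e.\ on $j$.

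\medskip

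The main obstacle, such as it is, is bookkeeping rather than depth: one must be careful that the Sobolev embedding is applied on $\mathbb{R}^3$ with the correct scaling relation (which \eqref{011} supplies), that the weight $|t|^{-1/2}$ lands in the weak-$L^2$ Lorentz space so that Hölder in Lorentz spaces applies cleanly (this is where the restriction to non-endpoint exponents is convenient), and that the conjugation by $\mathcal{M}_m(-t)$ is genuinely norm-preserving on $L_x^p$ pointwise in $t$. All three are exactly the ingredients already used in \cite{Mas16,KilMasMurVis17}, so after setting up the pointwise-in-$t$ reduction the remaining steps are routine applications of Sobolev embedding and Lorentz-space Hölder.
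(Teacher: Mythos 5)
Your proof is correct: the Lorentz nesting $L_t^{\widetilde q,2}\hookrightarrow L_t^{\widetilde q,\infty}$, the pointwise-in-$t$ Sobolev embedding $\dot W^{1/2,r_1}(\mathbb{R}^3)\hookrightarrow L^{\widetilde r}(\mathbb{R}^3)$ after conjugating by the unimodular factor $\mathcal{M}_m(-t)$, and the Lorentz--H\"older step with $|t|^{-1/2}\in L_t^{2,\infty}$ using $\frac{1}{\widetilde q}=\frac12+\frac{1}{q_1}$ are exactly the ingredients of the argument the paper delegates to \cite{Mas16,KilMasMurVis17}, and all the exponent checks are right (with the $j$-dependence entering through the factor $m^{-1/2}$ in the equivalence \eqref{010}). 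Nothing is missing.
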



\begin{lemma}[Nonlinear estimates]\label{Nonlinear estimates}
The following inequalities hold.
\begin{align*}
\|v\overline{u}\|_{N_1}\lesssim\|u\|_{S^\text{weak}}\|v\|_{W_2}+\|u\|_{W_1}\|v\|_{S^\text{weak}}\lesssim\|u\|_{W_1}\|v\|_{W_2},
\end{align*}
\begin{align*}
\|u_1u_2\|_{N_2}\lesssim\|u_1\|_{W_1}\|u_2\|_{S^\text{weak}}+\|u_1\|_{S^\text{weak}}\|u_2\|_{W_1}\lesssim\|u_1\|_{W_1}\|u_2\|_{W_1},
\end{align*}
\begin{gather*}
	\|v\overline{u}\|_{L_t^{\frac{3}{2},2}\dot{X}_\frac{1}{2}^{\frac{1}{2},\frac{18}{13}}}\lesssim\|v\|_{L_t^\infty\dot{X}_1^\frac{1}{2}}\|u\|_{S}+\|v\|_{S}\|u\|_{L_t^\infty\dot{X}_\frac{1}{2}^\frac{1}{2}},\\
	\|u_1u_2\|_{L_t^{\frac{3}{2},2}\dot{X}_1^{\frac{1}{2},\frac{18}{13}}}\lesssim\|u_1\|_{L_t^{\infty}\dot{X}_\frac{1}{2}^\frac{1}{2}}\|u_2\|_{S}+\|u_1\|_{S}\|u_2\|_{L_t^{\infty}\dot{X}_\frac{1}{2}^\frac{1}{2}},
\end{gather*}
\begin{align*}
\left\| \int_{\tau}^t e^{i(t-s)\Delta}(v\overline{u})(s) ds\right\|_{S}\lesssim
\|u\|_{S}\|v\|_{S},
\end{align*}
\begin{align*}
\left\| \int_{\tau}^t e^{\frac{1}2i(t-s)\Delta}(u_1 u_2 )(s) ds\right\|_{S}\lesssim
\|u_1\|_{S}\|u_2\|_{S}.
\end{align*}
\end{lemma}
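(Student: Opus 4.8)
The final statement to prove is Lemma \ref{Nonlinear estimates}, a collection of nonlinear (bilinear) estimates in the Strichartz-type spaces. Let me sketch the proof plan.

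\textbf{Proof plan.} The plan is to reduce every inequality to an application of the H\"older inequality in the Lorentz-modified spacetime spaces, combined with the Strichartz estimates of Proposition \ref{Strichartz estimates} and the embeddings of Lemma \ref{Embeddings}; the index bookkeeping is already prepared by the relations \eqref{011}. First I would record the basic fact that the weighted operator $J_m^s(t) = \mathcal{M}_m(t)(-\tfrac{t^2}{m^2}\Delta)^{s/2}\mathcal{M}_m(-t)$ obeys a fractional Leibniz rule up to the unimodular factors $\mathcal{M}_m(\pm t)$: writing a product $v\bar u$ and conjugating by $\mathcal{M}_m$, one is left with estimating $(-t^2\Delta)^{1/2}$ of a product of two modulated functions, which is controlled by the usual Kato--Ponce / fractional Leibniz inequality in $L_x^{r_1'}$ at each fixed time, distributing $s=\tfrac12$ onto either factor. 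Note that for the system the two components carry different masses ($m=\tfrac12$ for $u$, $m=1$ for $v$, i.e. the spaces $W_1,W_2$), so one must keep track of which $\mathcal{M}_m$ appears; this is harmless because the modulation factors commute out of every $L_x^r$ norm.

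The second step is the spatial H\"older split at fixed $t$: for the first inequality, $\tfrac{1}{r_1'} = \tfrac{1}{\widetilde r}+\tfrac{1}{r_1}$ from \eqref{011} lets us write $\|(-t^2\Delta)^{1/2}(v\bar u)\|_{L_x^{r_1'}} \lesssim \|(-t^2\Delta)^{1/2} u\|_{L_x^{r_1}}\|v\|_{L_x^{\widetilde r}} + \|u\|_{L_x^{\widetilde r}}\|(-t^2\Delta)^{1/2} v\|_{L_x^{r_1}}$, and the analogous split for $u_1u_2$. The third step is the temporal H\"older inequality in Lorentz spaces: using $\tfrac{1}{q_1'} = \tfrac{1}{\widetilde q}+\tfrac{1}{q_1}$ together with the Lorentz H\"older inequality $\|fg\|_{L_t^{q_1',2}} \lesssim \|f\|_{L_t^{\widetilde q,\infty}}\|g\|_{L_t^{q_1,2}}$ (valid since $\tfrac1{q_1'}=\tfrac1{\widetilde q}+\tfrac1{q_1}$ and $\tfrac12 = \infty^{-1}+\tfrac12$), one converts the pointwise-in-time bound into the claimed $N_1$ (resp. $N_2$) estimate with factors $\|u\|_{S^{\mathrm{weak}}}\|v\|_{W_2}$ etc. The second chain $\lesssim \|u\|_{W_1}\|v\|_{W_2}$ then follows from the embedding $S^{\mathrm{weak}}\lesssim S\lesssim_j W_j$ of Lemma \ref{Embeddings}. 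The mixed estimates into $L_t^{3/2,2}\dot X^{1/2,18/13}$ are obtained the same way, now splitting the time exponent as $\tfrac23 = \tfrac12\cdot 0 + \ldots$, i.e. pairing an $L_t^\infty$ norm of one factor (controlled by $\|\cdot\|_{L_t^\infty\dot X_m^{1/2}}$ after the Leibniz rule puts the derivative there) with the $S=L_t^{3/2,2}L_x^{9/2}$ norm of the other, using $\tfrac{13}{18} = \tfrac{2}{9}+\tfrac12$ for the spatial indices. Finally, the last two inequalities about the Duhamel integrals follow by combining the inhomogeneous Strichartz estimate of Proposition \ref{Strichartz estimates}(ii) — with the admissible pair $(\widetilde q,\widetilde r)'$-dual, in fact with the pair chosen so that the output lands in $S$ — applied to the source term $v\bar u$ (resp. $u_1u_2$), and then invoking the just-proved bound $\|v\bar u\|_{L_t^{3/2,2}\dot X_{1/2}^{1/2,18/13}} \lesssim \|u\|_S\|v\|_S$ after discarding the derivative using the Sobolev-type embedding implicit in $S$; one checks $(3/2,18/13)$ has dual exponents forming an admissible pair as required by Proposition \ref{Strichartz estimates}(ii).

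\textbf{Main obstacle.} The only genuinely non-routine point is the fractional Leibniz rule for $J_m^{1/2}(t)$: one needs the inequality $\|(-\Delta)^{1/4}(fg)\|_{L^{r'_1}} \lesssim \|(-\Delta)^{1/4} f\|_{L^{r_1}}\|g\|_{L^{\widetilde r}} + \|f\|_{L^{\widetilde r}}\|(-\Delta)^{1/4} g\|_{L^{r_1}}$ at the exact H\"older triples forced by \eqref{011}, and to verify that conjugation by the modulation $\mathcal{M}_m(\pm t)$ does not spoil it — which is true because $\mathcal{M}_m$ is a multiplication by a unimodular function and the fractional Leibniz inequality is a statement about $|\nabla|^{1/2}$ acting on products, so after writing $J_m^{1/2}(t)(fg) = \mathcal{M}_m(t)(-\tfrac{t^2}{m^2}\Delta)^{1/4}(\mathcal{M}_m(-t)f\cdot \mathcal{M}_m(-t)g)$ the Kato--Ponce inequality applies verbatim to the bracket and the outer $\mathcal{M}_m(t)$ drops in $L_x^{r_1'}$. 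Once this is granted, everything else is H\"older in Lorentz spaces plus the cited Strichartz and embedding lemmas; as the statement itself notes, "they easily follow as in \cite{Mas16}", so I would present the argument compactly, proving the first inequality in detail and indicating that the remaining ones are proved by the identical scheme with the index choices dictated by \eqref{011}.
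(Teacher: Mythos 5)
Your scheme for the first four estimates is essentially the standard one (and the one behind the references the paper cites for this lemma): conjugate by the modulations, apply a fractional Leibniz rule at fixed time with the spatial exponents dictated by \eqref{011}, then H\"older in the Lorentz time variable. One caveat: the modulations do not merely ``commute out''. The step that makes the Leibniz rule applicable is the mass-resonance identity
$\mathcal{M}_{1/2}(-t)(v\overline{u})=(\mathcal{M}_{1}(-t)v)\,\overline{\mathcal{M}_{1/2}(-t)u}$ and $\mathcal{M}_{1}(-t)(u_1u_2)=(\mathcal{M}_{1/2}(-t)u_1)(\mathcal{M}_{1/2}(-t)u_2)$; the inner quadratic phases combine exactly into the modulation of the correct mass for $N_1$ (resp.\ $N_2$). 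Your displayed identity $J_m^{1/2}(t)(fg)=\mathcal{M}_m(t)(-\tfrac{t^2}{m^2}\Delta)^{1/4}(\mathcal{M}_m(-t)f\cdot\mathcal{M}_m(-t)g)$ is the equal-mass case; for the system you must check that the phases cancel as above, since otherwise a residual quadratic phase sits inside the fractional derivative and Kato--Ponce no longer applies. With that identity granted, your index bookkeeping for the first four inequalities is correct.

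The derivation of the last two ($S$-space Duhamel) estimates does not work as written. First, Proposition \ref{Strichartz estimates}(ii) only outputs admissible Strichartz norms, and $S=L_t^{3/2,2}L_x^{9/2}$ corresponds to the non-admissible pair $(\tfrac32,\tfrac92)$, which satisfies $\frac{2}{q}+\frac{3}{r}=2$ rather than $\frac32$; this is precisely why the paper remarks that these two estimates rest on Kato's inhomogeneous Strichartz estimates for non-admissible pairs rather than on Proposition \ref{Strichartz estimates}(ii). Second, the intermediate bound you invoke, $\|v\overline{u}\|_{L_t^{3/2,2}\dot{X}_{1/2}^{1/2,18/13}}\lesssim\|u\|_S\|v\|_S$, is not the one you proved: the third and fourth inequalities carry $L_t^\infty\dot{X}_m^{1/2}$ factors, which cannot be controlled by $S$ norms, and there is no ``Sobolev embedding implicit in $S$'' that lets you discard the half-derivative on the left or gain one on the right. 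The correct (and simpler) route is derivative-free: by H\"older in space and in the Lorentz time variable, $\|v\overline{u}\|_{L_t^{3/4,1}L_x^{9/4}}\lesssim\|v\|_S\|u\|_S$, and Kato's non-admissible inhomogeneous estimate then maps $L_t^{3/4,1}L_x^{9/4}$ into $L_t^{3/2,2}L_x^{9/2}$ (the exponents satisfy the required scaling and acceptability conditions in three dimensions). You should replace the last step of your plan by this argument.
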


Remark that the last two are consequences of inhomogeneous Strichartz estimate for non-admissible pairs by Kato \cite{Kat94}.

\begin{lemma}[Interpolation in $\dot{X}_m^{s,r}$]\label{Interpolation in X}
Let $I\subset\mathbb{R}$.
The following inequality holds.
\begin{align*}
\|f\|_{L^{\rho,\gamma}(I;\dot{X}_m^{s,r})}\lesssim\|f\|_{L^{\rho_1,\gamma_1}(I;\dot{X}_m^{s_1,r_1})}^{1-\theta}\|f\|_{L^{\rho_2,\gamma_2}(I;\dot{X}_m^{s_2,r_2})}^\theta
\end{align*}
for $1\leq\rho, \rho_1, \rho_2<\infty$, $1\leq\gamma, \gamma_1, \gamma_2\leq\infty$, $1<r, r_1, r_2<\infty$, $0<\theta<1$ with $\frac{1}{\rho}=\frac{1-\theta}{\rho_1}+\frac{\theta}{\rho_2}$, $\frac{1}{\gamma}=\frac{1-\theta}{\gamma_1}+\frac{\theta}{\gamma_2}$, $s=(1-\theta)s_1+\theta s_2$, and $\frac{1}{r}=\frac{1-\theta}{r_1}+\frac{\theta}{r_2}$.
\end{lemma}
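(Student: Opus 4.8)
The plan is to freeze the time variable, reduce the fixed-$t$ assertion to a classical Gagliardo--Nirenberg-type interpolation inequality on $\mathbb{R}^3$, and then recover the space-time estimate by a H\"older inequality on the Lorentz scale in $t$. The starting point is the factorization of $J_m^s(t)$ recorded in \eqref{070}: for $t\neq 0$ one has $J_m^s(t)f=\mathcal{M}_m(t)\,|t/m|^s|\nabla|^s\,\mathcal{M}_m(-t)f$, and since $\mathcal{M}_m(t)$ is multiplication by a unimodular function it acts isometrically on $L_x^r$, so that
\[
	\|f\|_{\dot{X}_m^{s,r}(t)}=|t/m|^s\,\bigl\||\nabla|^s(\mathcal{M}_m(-t)f)\bigr\|_{L_x^r}.
\]
Writing $g_t:=\mathcal{M}_m(-t)f$, this turns the lemma, pointwise in $t$, into the weighted inequality $|t/m|^s\||\nabla|^sg_t\|_{L_x^r}\lesssim\bigl(|t/m|^{s_1}\||\nabla|^{s_1}g_t\|_{L_x^{r_1}}\bigr)^{1-\theta}\bigl(|t/m|^{s_2}\||\nabla|^{s_2}g_t\|_{L_x^{r_2}}\bigr)^{\theta}$; here the prefactors are automatically consistent, since $|t/m|^s=(|t/m|^{s_1})^{1-\theta}(|t/m|^{s_2})^{\theta}$ under $s=(1-\theta)s_1+\theta s_2$.

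Thus it suffices to establish $\||\nabla|^sg\|_{L^r}\lesssim\||\nabla|^{s_1}g\|_{L^{r_1}}^{1-\theta}\||\nabla|^{s_2}g\|_{L^{r_2}}^{\theta}$ for all $g$, under the index relations $s=(1-\theta)s_1+\theta s_2$ and $\frac1r=\frac{1-\theta}{r_1}+\frac{\theta}{r_2}$. This is the standard interpolation inequality for the homogeneous Sobolev scale $\dot W^{s,r}(\mathbb{R}^3)$, and I would obtain it either from the identification $\dot W^{s,r}=[\dot W^{s_1,r_1},\dot W^{s_2,r_2}]_\theta$ of complex interpolation spaces together with the universal bound $\|g\|_{[X_0,X_1]_\theta}\le\|g\|_{X_0}^{1-\theta}\|g\|_{X_1}^{\theta}$, or directly by a Littlewood--Paley argument: for each dyadic $N\in 2^{\mathbb Z}$ one has $N^s=(N^{s_1})^{1-\theta}(N^{s_2})^{\theta}$, and after passing to a square function one sums frequency by frequency using H\"older.

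Finally I would integrate in $t$. Setting $h(t):=\|f(t)\|_{\dot{X}_m^{s_1,r_1}(t)}$ and $k(t):=\|f(t)\|_{\dot{X}_m^{s_2,r_2}(t)}$, the previous two paragraphs give $\|f(t)\|_{\dot{X}_m^{s,r}(t)}\lesssim h(t)^{1-\theta}k(t)^{\theta}$ for a.e.\ $t\in I$. Taking the $L_t^{\rho,\gamma}(I)$ quasi-norm, applying H\"older's inequality in Lorentz spaces (O'Neil's inequality) with the exponent pairs $\bigl(\tfrac{\rho_1}{1-\theta},\tfrac{\gamma_1}{1-\theta}\bigr)$ and $\bigl(\tfrac{\rho_2}{\theta},\tfrac{\gamma_2}{\theta}\bigr)$ --- whose reciprocals add to $\bigl(\tfrac1\rho,\tfrac1\gamma\bigr)$ precisely because of the hypotheses $\tfrac1\rho=\tfrac{1-\theta}{\rho_1}+\tfrac{\theta}{\rho_2}$ and $\tfrac1\gamma=\tfrac{1-\theta}{\gamma_1}+\tfrac{\theta}{\gamma_2}$ --- and using the homogeneity $\bigl\||F|^{\alpha}\bigr\|_{L^{p,q}}=\|F\|_{L^{\alpha p,\alpha q}}^{\alpha}$ of the Lorentz quasi-norm, one arrives at $\|h^{1-\theta}k^{\theta}\|_{L_t^{\rho,\gamma}(I)}\lesssim\|h\|_{L_t^{\rho_1,\gamma_1}(I)}^{1-\theta}\|k\|_{L_t^{\rho_2,\gamma_2}(I)}^{\theta}$, which is the claim. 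The only point that requires genuine care is the fixed-$t$ Sobolev interpolation in the mixed-integrability regime $r_1\neq r_2$: the degenerate cases $s_1=s_2$ (pure H\"older in $x$) and $r_1=r_2$ (pure Littlewood--Paley) are elementary, but the general case really needs complex interpolation of the homogeneous Sobolev scale or a square-function estimate. The rest of the argument is routine and parallels \cite{Mas16,KilMasMurVis17}, which I would cite for the analogous statement there.
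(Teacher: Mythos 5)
Your proposal is correct. The paper itself gives no proof of this lemma (it is stated in Subsection 2.6 with the remark that such estimates ``easily follow as in \cite{Mas16}''), and your argument --- reducing at each fixed $t\neq0$ via the factorization \eqref{070} to the homogeneous Sobolev interpolation $\||\nabla|^s g\|_{L^r}\lesssim\||\nabla|^{s_1}g\|_{L^{r_1}}^{1-\theta}\||\nabla|^{s_2}g\|_{L^{r_2}}^{\theta}$ (which the square-function characterization of Lemma \ref{Square function estimate} plus H\"older in $N$ and then in $x$ delivers), followed by O'Neil's H\"older inequality (Lemma \ref{Holder in Lorentz spaces}) together with the homogeneity $\||F|^{\alpha}\|_{L^{p,q}}=\|F\|_{L^{\alpha p,\alpha q}}^{\alpha}$ in the time variable --- is exactly the standard route the cited references take.
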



\begin{lemma}\label{Bounded multiplication operator}
Let $1\leq r<\infty$, $0<s<\frac{3}{r}$ and let $\chi\in \mathcal{S}(\mathbb{R}^3)$, where $\mathcal{S}(\mathbb{R}^3)$ denotes Schwartz space.
Then, a multiplication operator $\chi\times$ is bounded on $\dot{X}_{m}^{s,r}$.
\end{lemma}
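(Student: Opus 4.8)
The plan is to reduce the statement, via the definition of the $\dot{X}_m^{s,r}$ norm, to the classical fact that a Schwartz function acts as a bounded multiplier on the homogeneous Sobolev space $\dot{H}^{s,r} = |\nabla|^{-s}L^r$ (equivalently the Riesz potential space $\dot{W}^{s,r}$) for $0 < s < 3/r$. Recall from \eqref{010} that
\[
\|f\|_{\dot{X}_m^{s,r}} \sim \bigl\| |t|^s |\nabla|^s \mathcal{M}_m(-t) f \bigr\|_{L_x^r},
\]
so that, writing $g = \mathcal{M}_m(-t) f$, we have $\|f\|_{\dot{X}_m^{s,r}} \sim |t|^s \| |\nabla|^s g\|_{L^r}$. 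The key observation is that the operator $\mathcal{M}_m(-t)$ is simply multiplication by the unimodular function $e^{-im|x|^2/(2t)}$, hence it intertwines multiplication by $\chi$ with multiplication by $\chi$ itself: $\mathcal{M}_m(-t)(\chi f) = \chi\,\mathcal{M}_m(-t) f$. Therefore
\[
\|\chi f\|_{\dot{X}_m^{s,r}} \sim |t|^s \bigl\| |\nabla|^s (\chi g) \bigr\|_{L^r},
\]
and the whole problem collapses to showing $\| |\nabla|^s(\chi g)\|_{L^r} \lesssim_\chi \| |\nabla|^s g\|_{L^r}$, with a constant independent of $t$, uniformly over $g$ in the homogeneous space.

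The first step is to make this reduction precise and to note that the $t=0$ case (if it needs to be addressed at all) is vacuous since $\dot{X}_m^{s,r}(t)$ is defined for $t\ne0$; for $t\ne0$ the equivalence of norms in \eqref{010} is already asserted in the paper. The second and main step is to prove the homogeneous multiplier bound
\[
\bigl\| |\nabla|^s(\chi g)\bigr\|_{L^r(\mathbb{R}^3)} \lesssim_{\chi,s,r} \bigl\| |\nabla|^s g\bigr\|_{L^r(\mathbb{R}^3)}, \qquad 0 < s < \tfrac{3}{r}.
\]
For this I would use the homogeneous Kato--Ponce (fractional Leibniz) inequality: $\| |\nabla|^s(\chi g)\|_{L^r} \lesssim \| |\nabla|^s \chi\|_{L^{p_1}} \|g\|_{L^{q_1}} + \|\chi\|_{L^{p_2}} \| |\nabla|^s g\|_{L^{q_2}}$ with suitable Hölder exponents, then control $\|g\|_{L^{q_1}}$ by $\| |\nabla|^s g\|_{L^r}$ through the homogeneous Sobolev embedding $\dot{W}^{s,r}(\mathbb{R}^3) \hookrightarrow L^{q_1}(\mathbb{R}^3)$ with $\tfrac{1}{q_1} = \tfrac{1}{r} - \tfrac{s}{3}$ — this is exactly where the hypothesis $0 < s < 3/r$ is used, to guarantee $q_1 \in (r,\infty)$ is a legitimate Lebesgue exponent. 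Since $\chi \in \mathcal{S}(\mathbb{R}^3)$, all the norms of $\chi$ appearing ($\| |\nabla|^s\chi\|_{L^{p_1}}$, $\|\chi\|_{L^{p_2}}$, etc.) are finite, giving the desired bound with a $\chi$-dependent, $t$-independent constant. Combining with the reduction of the first step yields $\|\chi f\|_{\dot{X}_m^{s,r}} \lesssim_\chi \|f\|_{\dot{X}_m^{s,r}}$.

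The main obstacle is purely technical bookkeeping on the fractional Leibniz side: one must verify that the Hölder exponents $p_1,q_1,p_2,q_2$ can be chosen in the admissible range for the homogeneous Kato--Ponce estimate on $\mathbb{R}^3$ (in particular keeping all exponents strictly between $1$ and $\infty$ and matching $\tfrac1r = \tfrac1{p_1}+\tfrac1{q_1} = \tfrac1{p_2}+\tfrac1{q_2}$), and that the homogeneous Sobolev embedding used to absorb the low-regularity factor is valid — both of which are exactly guaranteed by the constraint $0 < s < 3/r$. An alternative route, if one prefers to avoid invoking Kato--Ponce, is to argue by complex interpolation: the claim is immediate for $s=0$ (multiplication by a bounded function is bounded on $L^r$) and, formally, for integer $s$ via the Leibniz rule together with boundedness of Riesz transforms on $L^r$; one then interpolates the analytic family $z \mapsto |\nabla|^z \chi |\nabla|^{-z}$ in the spirit of Stein interpolation. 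I expect the Kato--Ponce approach to be the cleanest to write, and since the paper explicitly says such estimates "easily follow as in \cite{Mas16}", a short pointer to that reference plus the one-line reduction via $\mathcal{M}_m(-t)(\chi f) = \chi\,\mathcal{M}_m(-t)f$ should suffice.
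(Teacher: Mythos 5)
Your reduction is the right one and is essentially the standard argument (the paper itself gives no proof of this lemma, deferring to \cite{Mas16}): since $\mathcal{M}_m(-t)$ is multiplication by a unimodular function it commutes with multiplication by $\chi$, the factors $|t|^s$ cancel, and the claim becomes the $t$-independent statement that $\chi$ is a bounded multiplier on $|\nabla|^{-s}L^r$, which one proves by a fractional Leibniz rule plus the homogeneous Sobolev embedding. Two small corrections. First, the case $t=0$ is not vacuous --- the space $\dot X_m^{s,r}(t)$ is defined for all $t$ via $J_m^s$, and only the second formula in \eqref{070} requires $t\neq 0$ --- but it is trivial, since $J_m^s(0)=|x|^s$ and $\||x|^s\chi f\|_{L^r}\le\|\chi\|_{L^\infty}\||x|^s f\|_{L^r}$. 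Second, in the high-order term $\|\chi\|_{L^{p_2}}\||\nabla|^s g\|_{L^{q_2}}$ you are forced to take $p_2=\infty$ and $q_2=r$ (there is no inclusion between $L^{q_2}$ and $L^r$ on $\mathbb{R}^3$), so you are invoking the $L^\infty$-endpoint fractional Leibniz rule; this is a known theorem but not the classical Kato--Ponce statement, and should be cited as such.

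The one step that genuinely fails as written is the endpoint $r=1$, which the hypotheses of the lemma allow. Your absorption of the low-order term uses the embedding $\||\nabla|^{-s}h\|_{L^{q_1}}\lesssim\|h\|_{L^{r}}$ with $\frac{1}{q_1}=\frac{1}{r}-\frac{s}{3}$, i.e. the Hardy--Littlewood--Sobolev inequality, and this is false at $r=1$ (only the weak-type bound survives there). So the argument as proposed covers $1<r<\infty$ but not $r=1$; either the lemma should be read with $r>1$ (which is all the paper ever uses --- the exponents that occur are $r=2$ and $r=\tfrac{18}{7}$), or the case $r=1$ requires a separate argument, for instance a direct kernel estimate for the commutator $[|\nabla|^s,\chi]\,|\nabla|^{-s}$. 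Apart from this endpoint issue the proof is sound and matches the route one would extract from \cite{Mas16}.
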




\begin{lemma}[Square function estimate]\label{Square function estimate}
For $0\leq s\leq 2$ and $1<p<\infty$, we have
\begin{align*}
\||\nabla|^sf\|_{L_x^p}\sim\Bigl\|\Bigl(\sum_{N\in2^\mathbb{Z}}\left|P_N|\nabla|^sf\right|^2\Bigr)^\frac{1}{2}\Bigr\|_{L_x^p}.
\end{align*}
\end{lemma}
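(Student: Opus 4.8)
The final statement to prove is the Square Function Estimate (Lemma \ref{Square function estimate}): for $0 \le s \le 2$ and $1 < p < \infty$,
\[
\||\nabla|^sf\|_{L_x^p}\sim\Bigl\|\Bigl(\sum_{N\in2^\mathbb{Z}}\left|P_N|\nabla|^sf\right|^2\Bigr)^\frac{1}{2}\Bigr\|_{L_x^p}.
\]

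The plan is to recognize the asserted equivalence as nothing but the homogeneous Littlewood--Paley square function estimate applied to the single function $g:=|\nabla|^sf$. Indeed, after this substitution the right-hand side is $\|(\sum_{N\in2^\mathbb{Z}}|P_Ng|^2)^{1/2}\|_{L_x^p}$, so the claim reduces to
\[
\|g\|_{L_x^p}\sim\Bigl\|\Bigl(\sum_{N\in2^\mathbb{Z}}|P_Ng|^2\Bigr)^{1/2}\Bigr\|_{L_x^p},\qquad 1<p<\infty,
\]
which is classical (see, e.g., the references in \cite{Mas16,KilMasMurVis17}). The hypothesis $0\le s\le2$ does not enter the inequality itself; it is there only to ensure that $|\nabla|^sf$ is a well-defined tempered distribution modulo polynomials --- hence a genuine element of $L^p_{\mathrm{loc}}$ --- for the functions $f$ occurring in this paper, so that the two sides may legitimately be compared (both being permitted to be infinite).

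For completeness I would recall the standard proof of the displayed inequality. First one notes that the bumps $\psi_N$ of \eqref{069} satisfy $\psi_N=\psi_1(\cdot/N)$, so that $|\partial^\alpha\psi_N(\xi)|\lesssim_\alpha|\xi|^{-|\alpha|}$ uniformly in $N$, they are supported in the annuli $\{|\xi|\sim N\}$, and at each point at most finitely many are nonzero. Consequently, for a Rademacher sequence $\{r_N(\omega)\}_{N\in2^\mathbb{Z}}$ the Fourier multiplier with symbol $m_\omega(\xi):=\sum_{N}r_N(\omega)\psi_N(\xi)$ obeys the Mikhlin--Hörmander condition with a bound independent of $\omega$, hence is bounded on $L^p$ for all $1<p<\infty$ uniformly in $\omega$. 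Averaging over $\omega$ and using Khintchine's inequality gives
\[
\Bigl\|\Bigl(\sum_N|P_Ng|^2\Bigr)^{1/2}\Bigr\|_{L_x^p}\sim\Bigl(\int_0^1\Bigl\|\sum_N r_N(\omega)P_Ng\Bigr\|_{L_x^p}^p\,d\omega\Bigr)^{1/p}\lesssim\|g\|_{L_x^p},
\]
which is the square function bound. For the reverse inequality I would introduce a fattened family $\widetilde P_N$ with $\widetilde P_NP_N=P_N$ and $\sum_N\widetilde P_NP_Ng=g$ in $L^p$ (valid for $g\in L^p$, $1<p<\infty$), pair $g$ against $h\in L^{p'}$ to write $\langle g,h\rangle=\sum_N\langle P_Ng,\widetilde P_N^{\ast}h\rangle$, and then apply the Cauchy--Schwarz inequality in $N$, Hölder's inequality, and the square function bound just proved for $h$; taking the supremum over $\|h\|_{L^{p'}}\le1$ yields $\|g\|_{L_x^p}\lesssim\|(\sum_N|P_Ng|^2)^{1/2}\|_{L_x^p}$.

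There is essentially no obstacle: the statement is a routine consequence of Calderón--Zygmund/Littlewood--Paley theory, exactly as in \cite{Mas16,KilMasMurVis17}. The only points that merit a word of care are the uniform Mikhlin bounds for the randomized multipliers $m_\omega$ and the verification that $|\nabla|^sf$ is a bona fide object to which this theory applies (this is where $0\le s\le2$ and $1<p<\infty$ are used); both are standard and I would dispatch them quickly.
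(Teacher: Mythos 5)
Your proposal is correct: the lemma is exactly the classical homogeneous Littlewood--Paley square function estimate applied to $g=|\nabla|^sf$, and your Khintchine/Mikhlin argument plus duality is the standard proof. The paper itself gives no proof, simply citing \cite{Mas16,KilMasMurVis17} for this and the neighboring estimates, so your treatment is consistent with (and fills in) what the paper takes for granted.
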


The following H\"older's inequality for Lorentz spaces holds.

\begin{lemma}[H\"older in Lorentz spaces, \cite{Hun66,One63}]\label{Holder in Lorentz spaces}
Let $1\leq q,q_1,q_2<\infty$ and $1\leq\alpha,\alpha_1,\alpha_2\leq\infty$ satisfy
\begin{align*}
\frac{1}{q}=\frac{1}{q_1}+\frac{1}{q_2}\ \ \text{ and }\ \ \frac{1}{\alpha}=\frac{1}{\alpha_1}+\frac{1}{\alpha_2}.
\end{align*}
Then, the following estimate holds.
\begin{align*}
\|fg\|_{L_t^{q,\alpha}}\lesssim\|f\|_{L_t^{q_1,\alpha_1}}\|g\|_{L_t^{q_2,\alpha_2}}.
\end{align*}
\end{lemma}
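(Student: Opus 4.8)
The plan is to reduce the inequality to a one‑dimensional weighted Lebesgue space $L^\alpha((0,\infty),dt/t)$ by passing to decreasing rearrangements, and then combine the classical pointwise rearrangement inequality of O'Neil with ordinary H\"older's inequality. First I would recall the rearrangement form of the Lorentz quasi-norm: for $1\le q<\infty$ and $1\le\alpha\le\infty$,
\[
\|f\|_{L_t^{q,\alpha}(I)}\sim\bigl\|t^{1/q}f^*(t)\bigr\|_{L^\alpha((0,\infty),\,dt/t)},
\]
where $f^*$ denotes the decreasing rearrangement of $|f|$ on $I$. This is equivalent to the distribution‑function expression defining $L_t^{q,\alpha}$ in Subsection \ref{subsec:Function spaces}, since $\lambda\mapsto|\{t\in I:|f(t)|>\lambda\}|$ and $t\mapsto f^*(t)$ are generalized inverses of one another and both $d\lambda/\lambda$ and $dt/t$ on $(0,\infty)$ are dilation invariant; alternatively one may simply cite \cite{Hun66}.

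The second ingredient is the elementary bound $(fg)^*(t_1+t_2)\le f^*(t_1)g^*(t_2)$, valid for all $t_1,t_2>0$. Indeed, if $|f(x)g(x)|>f^*(t_1)g^*(t_2)$ then necessarily $|f(x)|>f^*(t_1)$ or $|g(x)|>g^*(t_2)$, so the superlevel set $\{|fg|>f^*(t_1)g^*(t_2)\}$ is contained in $\{|f|>f^*(t_1)\}\cup\{|g|>g^*(t_2)\}$, a set of measure at most $t_1+t_2$; hence the distribution function of $fg$ at the value $f^*(t_1)g^*(t_2)$ is at most $t_1+t_2$, which is the claim. Taking $t_1=t_2=t/2$ yields $(fg)^*(t)\le f^*(t/2)g^*(t/2)$.

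Combining these, and using $t^{1/q}=t^{1/q_1}t^{1/q_2}$ together with $t^{1/q_i}\le 2^{1/q_i}(t/2)^{1/q_i}$, I would estimate
\[
\|fg\|_{L_t^{q,\alpha}}\sim\bigl\|t^{1/q}(fg)^*(t)\bigr\|_{L^\alpha(dt/t)}\lesssim\bigl\|\bigl((t/2)^{1/q_1}f^*(t/2)\bigr)\bigl((t/2)^{1/q_2}g^*(t/2)\bigr)\bigr\|_{L^\alpha(dt/t)},
\]
then apply H\"older's inequality in $L^\alpha(dt/t)$ with exponents $\alpha_1,\alpha_2$ (legitimate also when some $\alpha_i=\infty$), and finally change variables $t\mapsto 2t$, which leaves $dt/t$ invariant. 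The right‑hand side becomes $\bigl\|t^{1/q_1}f^*(t)\bigr\|_{L^{\alpha_1}(dt/t)}\bigl\|t^{1/q_2}g^*(t)\bigr\|_{L^{\alpha_2}(dt/t)}\sim\|f\|_{L_t^{q_1,\alpha_1}}\|g\|_{L_t^{q_2,\alpha_2}}$, which is the asserted inequality.

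The argument is essentially routine once these two standard facts are in place, and no genuine obstacle arises; the only mild point of care is tracking the equivalence constants in the rearrangement characterization of the quasi-norm, and noting that at no stage is the quasi-triangle inequality for $L_t^{q,\alpha}$ needed, only the product structure. If one prefers to avoid rearrangements altogether, the same three steps — the set inclusion for superlevel sets, H\"older in $\ell^\alpha$, and dyadic scaling — can be run directly from the dyadic level‑set characterization recalled in Subsection \ref{subsec:Function spaces}, but the rearrangement route is the most transparent.
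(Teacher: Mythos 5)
Your proof is correct. The paper does not prove this lemma at all --- it is quoted directly from Hunt and O'Neil --- so there is no internal argument to compare against; what you have written is essentially the classical proof from those references. Both key ingredients are handled properly: the rearrangement characterization $\|f\|_{L^{q,\alpha}}\sim\|t^{1/q}f^*(t)\|_{L^\alpha(dt/t)}$ of the distribution-function quasi-norm used in the paper, and the pointwise bound $(fg)^*(t_1+t_2)\le f^*(t_1)g^*(t_2)$, whose superlevel-set proof you give correctly (the contrapositive of ``$|f|\le f^*(t_1)$ and $|g|\le g^*(t_2)$ imply $|fg|\le f^*(t_1)g^*(t_2)$,'' combined with $|\{|f|>f^*(t_1)\}|\le t_1$). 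The remaining steps --- splitting $t^{1/q}=t^{1/q_1}t^{1/q_2}$, H\"older in $L^\alpha(dt/t)$ with exponents $\alpha_1,\alpha_2$ (valid also when one of them is $\infty$), and the dilation invariance of $dt/t$ under $t\mapsto 2t$ --- are all legitimate under the stated hypotheses $1\le q,q_1,q_2<\infty$ and $1\le\alpha,\alpha_1,\alpha_2\le\infty$, and you correctly observe that no quasi-triangle inequality is required.
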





\section{Local well-posedness and Stability}\label{Local well-posedness and Stability}

\subsection{Local well-posedness}
In this subsection, we establish a local theory in $(C_t\dot{X}_{1/2}^{1/2}\cap W_1)\!\times\!(C_t\dot{X}_1^{1/2}\cap W_2)$ for \eqref{NLS}.
The result is given as a consequence of Strichartz estimate (Proposition \ref{Strichartz estimates}) and the estimates of the previous subsection (Lemma \ref{Embeddings} and Lemma \ref{Nonlinear estimates}).

Let us first establish a weak version of the local well-posedness.
\begin{proposition}\label{P:Slwp}
There exists a universal constant $\delta>0$ with the following property:
Let $\tau \in \mathbb{R}$. If a pair of functions $(u_\tau,v_\tau) \in \mathcal{S}'(\mathbb{R}^3)^2$ satisfies
\[
	\norm{ (e^{i(t-\tau)\Delta}u_\tau,e^{\frac{1}2i(t-\tau)\Delta}v_\tau) }_{S(I) \times S(I)} \le \delta
\]
for some interval $I \ni \tau$ then there exists a unique pair of functions $(u,v) \in S(I) \times S(I)$ such that
\begin{equation*}
\begin{cases}
\hspace{-0.4cm}&\displaystyle{u(t)=e^{i(t-\tau)\Delta}u_\tau+2i\int_{\tau}^te^{i(t-s)\Delta}(v\overline{u})(s)ds},\\
\hspace{-0.4cm}&\displaystyle{v(t)=e^{\frac{1}{2}i(t-\tau)\Delta}v_\tau+i\int_{\tau}^te^{\frac{1}{2}i(t-s)\Delta}(u^2)(s)ds}
\end{cases}
\end{equation*}
holds in $S(I)\times S(I)$ sense and satisfies
\[
	\norm{(u,v)}_{S(I)\times S(I)} \le 2 \norm{ (e^{i(t-\tau)\Delta}u_\tau,e^{\frac{1}2i(t-\tau)\Delta}v_\tau) }_{S(I) \times S(I)}.
\]
\end{proposition}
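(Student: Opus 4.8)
The plan is to solve the Duhamel system by a contraction mapping argument carried out entirely at the scaling-critical level $S(I)=L_t^{3/2,2}(I;L_x^{9/2})$; the point of the statement is that no smallness can be harvested from the length of $I$ (the $S$-norm is scaling invariant), so all of the required smallness must come from the hypothesis on $\eta := \norm{(e^{i(t-\tau)\Delta}u_\tau, e^{\frac12 i(t-\tau)\Delta}v_\tau)}_{S(I)\times S(I)} \le \delta$. I would work in the closed ball $B := \{(u,v)\in S(I)\times S(I) : \norm{(u,v)}_{S(I)\times S(I)}\le 2\eta\}$, equipped with the metric induced by the $S(I)\times S(I)$ norm, and introduce
\begin{align*}
	\Phi_1(u,v)(t) &:= e^{i(t-\tau)\Delta}u_\tau + 2i\int_\tau^t e^{i(t-s)\Delta}(v\overline{u})(s)\,ds, \\
	\Phi_2(u,v)(t) &:= e^{\frac12 i(t-\tau)\Delta}v_\tau + i\int_\tau^t e^{\frac12 i(t-s)\Delta}(u^2)(s)\,ds,
\end{align*}
so that a fixed point of $\Phi=(\Phi_1,\Phi_2)$ in $B$ is precisely the pair asserted by the proposition.

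The core input is the pair of inhomogeneous (non-admissible) bounds recorded in Lemma \ref{Nonlinear estimates}:
\[
	\Bigl\| \int_\tau^t e^{i(t-s)\Delta}(v\overline{u})(s)\,ds \Bigr\|_{S(I)} \le C\norm{u}_{S(I)}\norm{v}_{S(I)}, \qquad
	\Bigl\| \int_\tau^t e^{\frac12 i(t-s)\Delta}(u^2)(s)\,ds \Bigr\|_{S(I)} \le C\norm{u}_{S(I)}^2
\]
for an absolute constant $C$. In particular the Duhamel integrals converge in $S(I)$, so the equations do hold in the $S(I)\times S(I)$ sense. Summing the two components gives $\norm{\Phi(u,v)}_{S(I)\times S(I)} \le \eta + C'\norm{(u,v)}_{S(I)\times S(I)}^2 \le \eta + 4C'\eta^2$ for an absolute $C'$, which is $\le 2\eta$ as soon as $4C'\eta \le 1$. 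For the contraction I would use the bilinear splittings $v\overline{u}-v'\overline{u'} = v\overline{(u-u')}+(v-v')\overline{u'}$ and $u^2-(u')^2=(u+u')(u-u')$ together with the same estimates to obtain $\norm{\Phi(u,v)-\Phi(u',v')}_{S(I)\times S(I)} \le 4C'\eta\,\norm{(u,v)-(u',v')}_{S(I)\times S(I)}$. Choosing the universal $\delta>0$ small enough that $4C'\delta \le \tfrac12$ then makes $\Phi$ a $\tfrac12$-contraction of $B$ into itself, and the Banach fixed point theorem produces the unique fixed point $(u,v)\in B$; by construction it satisfies the Duhamel system and $\norm{(u,v)}_{S(I)\times S(I)}\le 2\eta$, which is exactly the uniqueness claimed (uniqueness is asserted only among pairs obeying this bound).

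I do not expect a genuine obstacle: the only point requiring care is precisely the scaling-critical nature of $S(I)$, which forces the iteration to be based on the Lorentz-refined, Kato-type inhomogeneous Strichartz estimates of Lemma \ref{Nonlinear estimates} rather than on any time-localization, and those I would invoke as a black box. If one also wanted unconditional uniqueness in all of $S(I)\times S(I)$, and not merely within $B$, it would follow from the usual continuity/subdivision argument, but that is not needed for the statement as phrased.
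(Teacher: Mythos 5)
Your proposal is correct and is exactly the argument the paper intends: the paper's proof of this proposition is a one-line appeal to ``a standard contraction mapping argument with the last two estimates of Lemma \ref{Nonlinear estimates}'', which is precisely the fixed-point scheme you carry out in the ball of radius $2\eta$ in $S(I)\times S(I)$. Your remark that the stated uniqueness is naturally read as uniqueness within that ball (with unconditional uniqueness available by the usual subdivision argument) is also consistent with how the paper uses the result.
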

This follows by a standard contraction mapping argument with the last two estimates of Lemma \ref{Nonlinear estimates}.
We refer a pair of functions $(u,v)$ in this proposition to as an $S$-solution to \eqref{NLS} on $I$.

Now we are able to establish the following version of the local well-posedness result.
Theorem \ref{Th. Local well-posedness} corresponds to the case $t_0=0$.
\begin{theorem}[Local well-posedness]\label{Th. Local well-posedness2}
For any $t_0\in\mathbb{R}$ and 
$(u_0,v_0)\in\dot{X}_{1/2}^{1/2}(t_0)\times\dot{X}_1^{1/2}(t_0)$
there exist an open interval $I \ni t_0$ and a unique solution $(u,v)\in(C_t(I;\dot{X}_{1/2}^{1/2})\cap W_1(I))\!\times\!(C_t(I;\dot{X}_1^{1/2})\cap W_2(I))$ to \eqref{NLS} with a initial condition $(u(t_0),v(t_0))=(u_0,v_0)$.
Moreover, there exists a universal constant $\delta>0$ such that if the data satisfies
\begin{align*}
\|(e^{i(t-t_0)\Delta}u_0,e^{\frac{1}{2}i(t-t_0)\Delta}v_0)\|_{W_1(I)\times W_2(I)}\leq\delta,
\end{align*}
then the solution satisfies
\begin{align*}
\|(u,v)\|_{W_1(I)\times W_2(I)}\lesssim\|(e^{i(t-t_0)\Delta}u_0,e^{\frac{1}{2}i(t-t_0)\Delta}v_0)\|_{W_1(I)\times W_2(I)}.
\end{align*}
\end{theorem}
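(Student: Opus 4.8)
The plan is to derive Theorem~\ref{Th. Local well-posedness2} from the weak local well-posedness of Proposition~\ref{P:Slwp} by upgrading an $S$-solution to the stronger class $(C_t(I;\dot{X}_{1/2}^{1/2})\cap W_1(I))\times(C_t(I;\dot{X}_1^{1/2})\cap W_2(I))$. First I would record the free-evolution bounds: the hypothesis $(u_0,v_0)\in\dot{X}_{1/2}^{1/2}(t_0)\times\dot{X}_1^{1/2}(t_0)$ means $e^{-it_0\Delta}u_0$ and $e^{-\frac12 it_0\Delta}v_0$ lie in $\mathcal{F}\dot{H}^{1/2}$, hence $e^{i(t-t_0)\Delta}u_0=e^{it\Delta}(e^{-it_0\Delta}u_0)$ and $e^{\frac12 i(t-t_0)\Delta}v_0=e^{\frac12 it\Delta}(e^{-\frac12 it_0\Delta}v_0)$, and Proposition~\ref{Strichartz estimates}(1), applied with the admissible pair $(q_1,r_1)$, gives
\[
\|e^{i(t-t_0)\Delta}u_0\|_{L_t^\infty(\mathbb{R};\dot{X}_{1/2}^{1/2})\cap W_1(\mathbb{R})}\lesssim\|u_0\|_{\dot{X}_{1/2}^{1/2}(t_0)},
\]
and similarly for $v_0$ in $W_2(\mathbb{R})$; by Lemma~\ref{Embeddings} the same free evolutions belong to $S(\mathbb{R})$.

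The crux of the argument is then to produce an open interval $I\ni t_0$ on which the free evolutions are small in the critical norms, i.e. $\|(e^{i(t-t_0)\Delta}u_0,e^{\frac12 i(t-t_0)\Delta}v_0)\|_{W_1(I)\times W_2(I)}$ and the corresponding $S(I)\times S(I)$ norm are $\le\delta$, with $\delta$ the constant of Proposition~\ref{P:Slwp}. Since the problem is scaling critical in $\mathcal{F}\dot{H}^{1/2}$, this cannot be obtained by merely shrinking $I$; instead I would use a Littlewood--Paley/density argument. Splitting $e^{-it_0\Delta}u_0=P_{\le N}(e^{-it_0\Delta}u_0)+P_{>N}(e^{-it_0\Delta}u_0)$: the high-frequency piece has $\mathcal{F}\dot{H}^{1/2}$-norm tending to $0$ as $N\to\infty$, so its free evolution is small in $W_1(\mathbb{R})$ and $S(\mathbb{R})$ by the displayed bound; the low-frequency piece lies in $\mathcal{F}\dot{H}^{s}$ for any $s>1/2$, whose free evolution lies in a Strichartz space with strictly better time-integrability exponent than $q_1$, so that its $W_1(I)$-norm tends to $0$ as $|I|\to0$ by H\"older's inequality in time. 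Treating $v_0$ the same way produces the desired $I$.

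With such an $I$ fixed, Proposition~\ref{P:Slwp} yields an $S$-solution $(u,v)\in S(I)\times S(I)$ with $\|(u,v)\|_{S(I)\times S(I)}\le2\delta$ satisfying the Duhamel identities. Inserting this solution back into the Duhamel formula and applying Proposition~\ref{Strichartz estimates} (in particular the inhomogeneous estimate (ii)) together with the first two nonlinear estimates of Lemma~\ref{Nonlinear estimates}, for instance
\[
\|v\overline{u}\|_{N_1}\lesssim\|u\|_{S^{\text{weak}}}\|v\|_{W_2}+\|u\|_{W_1}\|v\|_{S^{\text{weak}}},
\]
a continuity (bootstrap) argument on $I$ — in which the factor $\|u\|_{S^{\text{weak}}(I)}+\|v\|_{S^{\text{weak}}(I)}$ is small thanks to the embedding of Lemma~\ref{Embeddings} — closes the estimate in $W_1(I)\times W_2(I)$ and in $L_t^\infty(I;\dot{X}_{1/2}^{1/2}\times\dot{X}_1^{1/2})$ and gives
\[
\|(u,v)\|_{W_1(I)\times W_2(I)}\lesssim\|(e^{i(t-t_0)\Delta}u_0,e^{\frac12 i(t-t_0)\Delta}v_0)\|_{W_1(I)\times W_2(I)},
\]
which is exactly the asserted smallness statement (and the general existence statement is covered by the same argument). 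Time-continuity of the Duhamel terms with values in $\mathcal{F}\dot{H}^{1/2}$ follows from the same estimates and dominated convergence, so $(u,v)\in C_t(I;\dot{X}_{1/2}^{1/2})\times C_t(I;\dot{X}_1^{1/2})$. For uniqueness, any solution in the class of the theorem is an $S$-solution near $t_0$ by Lemma~\ref{Embeddings}; subdividing $I$ into finitely many pieces on each of which the free evolution has $S\times S$ norm $\le\delta$, the uniqueness part of Proposition~\ref{P:Slwp} together with a continuation argument finishes the proof.

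The step I expect to be the main obstacle is the second one: because the equation is critical with respect to $\mathcal{F}\dot{H}^{1/2}$, smallness of the nonlinearity is not available from shrinking the time interval alone, so everything hinges on producing a short interval on which the free solution is small in the critical $W_j$ and $S$ norms; the Lorentz structure $L_t^{q_1,2}$ of these norms requires some care in that frequency-truncation argument.
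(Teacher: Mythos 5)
Your overall architecture (obtain an $S$-solution from Proposition \ref{P:Slwp}, upgrade it to the class $(C_t\dot{X}_{1/2}^{1/2}\cap W_1)\times(C_t\dot{X}_{1}^{1/2}\cap W_2)$ by a persistence-of-regularity bootstrap, and prove uniqueness by subdividing $I$) is exactly the paper's, and those parts of your argument are fine. The problem is the step you yourself single out as the main obstacle. Its premise is wrong: scaling criticality does not prevent you from obtaining smallness of the free evolution by shrinking $I$. Criticality only means that the length of $I$ cannot be chosen uniformly over bounded sets of data; for a \emph{fixed} datum it suffices that (i) $\|(e^{i(t-t_0)\Delta}u_0,e^{\frac12 i(t-t_0)\Delta}v_0)\|_{S(\mathbb{R})\times S(\mathbb{R})}<\infty$, which Strichartz (Proposition \ref{Strichartz estimates}) and Lemma \ref{Embeddings} give, and (ii) the norms $S=L_t^{3/2,2}L_x^{9/2}$ and $W_j=L_t^{6,2}\dot{X}_{2^{j-2}}^{\frac12,\frac{18}{7}}$ have finite primary \emph{and secondary} Lorentz exponents in time, hence are absolutely continuous in the time interval: by the dyadic characterization $\|f\|_{L_t^{q,2}(I)}\sim\bigl\|\|f\,\mathbf{1}_{\{2^{k-1}\le|f|\le 2^k\}}\|_{L_t^q(I)}\bigr\|_{\ell^2_k}$ and dominated convergence, the norm over $I$ tends to zero as $I$ shrinks to $\{t_0\}$. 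This is precisely how the paper chooses $I$; the obstruction you worry about would only arise for $L_t^{q,\infty}$-type norms, which is exactly why these spaces carry the secondary exponent $2$.

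Moreover, the substitute you propose does not work as written. With the paper's definition, $P_{\le N}$ is a frequency projection, and cutting $\widehat{u_0}$ to a compact set in $\xi$ does not make it smoother in $\xi$; hence $P_{\le N}(e^{-it_0\Delta}u_0)$ need not lie in $\mathcal{F}\dot{H}^{s}$ for any $s>\frac12$, so the claimed gain in time-integrability for the low-frequency piece is unavailable. (A decomposition adapted to the weighted norm would have to truncate in $|x|$, i.e.\ on the Fourier side of $\mathcal{F}\dot{H}^{\frac12}$, using $\||x|^{s}f\,\mathbf{1}_{\{|x|\le R\}}\|_{L^2}\le R^{s-\frac12}\|f\|_{\mathcal{F}\dot{H}^{1/2}}$; likewise the smallness of $\|P_{>N}g\|_{\mathcal{F}\dot{H}^{1/2}}$ is not automatic and would need its own density argument.) Since the simple shrinking argument already does the job, replace this entire step by the absolute-continuity observation above; the remainder of your proof then goes through and coincides with the paper's.
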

\begin{proof}
The strategy of the proof is as follows: We first obtain a $S$-solution. Then, we show it is a solution in the sense of Definition \ref{D:sol} by a persistence-of-regularity type argument.

By Lemma \ref{Embeddings} and Proposition \ref{Strichartz estimates}, we have
\begin{align*}
	\norm{(e^{i(t-t_0)\Delta}u_0,e^{\frac{1}2i(t-t_0)\Delta}v_0)}_{S(\mathbb{R})\times S(\mathbb{R})}
	&{}\lesssim \norm{(e^{i(t-t_0)\Delta}u_0,e^{\frac{1}2i(t-t_0)\Delta}v_0)}_{W_1(\mathbb{R}) \times W_2(\mathbb{R})}\\
	&{}\lesssim \norm{(e^{-it_0\Delta}u_0,e^{-\frac{1}2it_0\Delta}v_0)}_{\mathcal{F}\dot{H}^\frac12 \times \mathcal{F}\dot{H}^\frac12}<\infty.
\end{align*}
Hence, we can chose an open interval $I\ni t_0$ so that
\[
	\norm{(e^{i(t-t_0)\Delta}u_0,e^{\frac{1}2i(t-t_0)\Delta}v_0)}_{S(I)\times S(I)} \le \delta.
\]
For this interval, we have a unique $S$-solution $(u,v) \in S(I)\times S(I)$ by Proposition \ref{P:Slwp}.

We shall show this is a solution in the sense of Definition \ref{D:sol}.
By Proposition \ref{Strichartz estimates} and Lemma \ref{Nonlinear estimates}, one has
\begin{align*}
	\|(u,v)\|_{W_1(I)\times W_2(I)}
		&\leq \bigl\|\bigl(e^{i(t-t_0)\Delta}u_0,e^{\frac{1}{2}i(t-t_0)\Delta}v_0\bigr)\bigr\|_{W_1(I)\times W_2(I)}+c\|v\overline{u}\|_{N_1(I)}+c\|u^2\|_{N_2(I)}\\
		&\leq \bigl\|\bigl(e^{i(t-t_0)\Delta}u_0,e^{\frac{1}{2}i(t-t_0)\Delta}v_0\bigr)\bigr\|_{W_1(I)\times W_2(I)}+c\|(u,v)\|_{W_1(I)\times W_2(I)}\|(u,v)\|_{S(I)\times S(I)}.
\end{align*}
By subdividing the interval $I$ into $\cup_{j=0}^J I_j$ so that we have $c\|(u,v)\|_{S(I_j)\times S(I_j)} \le \frac12$ in each interval.
Suppose $t_0 \in I_0$. We have
\[
	\|(u,v)\|_{W_1(I_0)\times W_2(I_0)} \leq 2\bigl\|\bigl(e^{i(t-t_0)\Delta}u_0,e^{\frac{1}{2}i(t-t_0)\Delta}v_0\bigr)\bigr\|_{W_1(I_0)\times W_2(I_0)}.
\]
Another use of Strichartz' estimate then shows
\[
	\|(u,v)\|_{L_t^\infty(I_0; \dot{X}_{1/2}^{1/2})\times L_t^\infty(I_0;\dot{X}_1^{1/2})}
		\lesssim \|(u_0,v_0)\|_{\dot{X}_{1/2}^{1/2}(t_0)\times \dot{X}_1^{1/2}(t_0)}.
\]
Repeat the argument to obtain $(u,v) \in (L_t^\infty(I; \dot{X}^{1/2}_{1/2}) \cap W_1(I))\times (L_t^\infty(I;\dot{X}_1^{1/2})\cap W_2(I))$.
The latter statement is obvious. We omit the details.
\end{proof}

\subsection{Scattering criterion} In this subsection, we derive a necessary and sufficient condition for scattering (Proposition \ref{Scattering criterion}). 
We also give a scattering result for small data (Proposition \ref{Small data scattering}).

\begin{proposition}[Scattering criterion]\label{Scattering criterion}
Let $(u(t),v(t))$ be a unique solution to \eqref{NLS} given in Theorem \ref{Th. Local well-posedness}.
Let $I_{\max}=(T_\text{min},T_{\max})$ be the maximal interval of $(u(t),v(t))$. Then, the following seven statements are equivalent.
\begin{itemize}
\item[(1)] $(u,v)$ scatters forward in time;
\item[(2)] 
$\|(u,v)\|_{W_1([\tau,T_{\max}))\times W_2([\tau,T_{\max}))}<\infty$, $\exists \tau \in I_{\max}$;
\item[(3)] 
$\|(u,v)\|_{S([\tau,T_{\max}))\times S([\tau,T_{\max}))}<\infty$, $\exists \tau \in I_{\max}$;
\item[(4)] $\|u\|_{W_1([\tau,T_{\max}))}<\infty$, $\exists \tau \in I_{\max}$;
\item[(5)] $\|v\|_{W_2([\tau,T_{\max}))}<\infty$, $\exists \tau \in I_{\max}$;
\item[(6)] $\|u\|_{S([\tau,T_{\max}))}<\infty$, $\exists \tau \in I_{\max}$;
\item[(7)] $\|v\|_{S([\tau,T_{\max}))}<\infty$, $\exists \tau \in I_{\max}$.
\end{itemize}
Similar criterion holds for the backward-in-time scattering.
\end{proposition}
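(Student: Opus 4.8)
The plan is to prove the chain of equivalences in a cyclic (or star-shaped) fashion, using the Strichartz machinery of Proposition~\ref{Strichartz estimates}, the embeddings of Lemma~\ref{Embeddings}, and the nonlinear estimates of Lemma~\ref{Nonlinear estimates} as the basic tools. First I would dispose of the easy directions. The implication $(2)\Rightarrow(3)$ is immediate from Lemma~\ref{Embeddings}, which gives $\|u\|_{S}\lesssim\|u\|_{W_1}$ and $\|v\|_{S}\lesssim\|v\|_{W_2}$; similarly $(2)\Rightarrow(4)$, $(2)\Rightarrow(5)$, $(3)\Rightarrow(6)$, $(3)\Rightarrow(7)$, and $(4),(5)\Rightarrow(2)$, $(6),(7)\Rightarrow(3)$ are trivial since the product norm is just the max of the two factor norms. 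So the real content is to show (a) any one of the single-component finiteness statements upgrades to finiteness of the full pair, and (b) finiteness of the pair norm is equivalent to scattering.

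For step (a), suppose, say, $(6)$ holds: $\|u\|_{S([\tau,T_{\max}))}<\infty$. Subdivide $[\tau,T_{\max})$ into finitely many subintervals $I_k$ on which $\|u\|_{S(I_k)}$ is smaller than a small constant. On each $I_k$, the Duhamel formula for $v$ together with the estimate $\|u^2\|_{N_2}\lesssim\|u\|_{W_1}\|u\|_{S^{\text{weak}}}\le \|u\|_{W_1}\|u\|_{S}$ — wait, this needs $\|u\|_{W_1}$, so I would instead first bootstrap from $\|u\|_S$ small to $\|v\|_S$ small via the fifth estimate of Lemma~\ref{Nonlinear estimates}, $\bigl\|\int e^{\frac12 i(t-s)\Delta}(u^2)\,ds\bigr\|_S\lesssim\|u\|_S^2$, obtaining $\|v\|_{S(I_k)}\lesssim\|v(0)\text{-data}\|+\|u\|_{S(I_k)}^2$, hence $\|v\|_{S([\tau,T_{\max}))}<\infty$ too. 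Now $\|(u,v)\|_{S\times S}<\infty$, i.e. $(3)$ holds. Then feed this into the $W_j$-Strichartz estimates: on each subinterval where $\|(u,v)\|_{S\times S}$ is small, the inequality from the proof of Theorem~\ref{Th. Local well-posedness2},
\[
\|(u,v)\|_{W_1(I_k)\times W_2(I_k)}\lesssim\|\text{(free evolutions)}\|_{W_1(I_k)\times W_2(I_k)}+c\|(u,v)\|_{W_1(I_k)\times W_2(I_k)}\|(u,v)\|_{S(I_k)\times S(I_k)},
\]
absorbs the last term and yields $\|(u,v)\|_{W_1(I_k)\times W_2(I_k)}<\infty$; summing over the finitely many $k$ gives $(2)$. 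The same argument, mutatis mutandis, runs starting from $(4)$, $(5)$, or $(7)$, so all of $(2)$–$(7)$ are equivalent.

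For step (b), $(2)\Rightarrow(1)$: given $\|(u,v)\|_{W_1\times W_2}<\infty$ on $[\tau,T_{\max})$, first note $T_{\max}=\infty$ by the blow-up criterion implicit in the local theory (if $T_{\max}<\infty$, the $W_j$-norms on $[\tau,T_{\max})$ would have to diverge). Then define $u_+:=e^{-i\tau\Delta}u(\tau)+2i\int_\tau^\infty e^{-is\Delta}(v\bar u)(s)\,ds$ and analogously $v_+$; the integrals converge in $\mathcal{F}\dot H^{1/2}$ because $\|v\bar u\|_{N_1}\lesssim\|u\|_{W_1}\|v\|_{W_2}<\infty$ (and similarly for $u^2\in N_2$) combined with the inhomogeneous Strichartz estimate, and $\|(e^{-it\Delta}u(t),\,\cdot\,)-(u_+,v_+)\|_{\mathcal{F}\dot H^{1/2}\times\mathcal{F}\dot H^{1/2}}\to0$ as $t\to\infty$ follows from the tail bound $\|v\bar u\|_{N_1((t,\infty))}\to0$. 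Conversely $(1)\Rightarrow(2)$: if $(u,v)$ scatters to $(u_+,v_+)$, then for large $\tau$ the free evolution $(e^{i(t-\tau)\Delta}u(\tau),e^{\frac12 i(t-\tau)\Delta}v(\tau))$ is close to $(e^{it\Delta}u_+,e^{\frac12 it\Delta}v_+)$, whose $W_1\times W_2$-norm over $(\tau,\infty)$ tends to zero by dominated convergence in the Strichartz bound $\|e^{it\Delta}u_+\|_{W_1}\lesssim\|u_+\|_{\mathcal{F}\dot H^{1/2}}$; choosing $\tau$ so that this is below the small-data threshold $\delta$ of Theorem~\ref{Th. Local well-posedness2} gives $\|(u,v)\|_{W_1((\tau,\infty))\times W_2((\tau,\infty))}<\infty$. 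The backward statement is identical with time reversed. The main obstacle I anticipate is the careful handling of the interval subdivisions and the bootstrapping order in step (a) — in particular making sure one always upgrades through the $S$-norm (which is subcritical-friendly and splits under finite subdivision) before touching the $W_j$-norms, since the nonlinear estimates of Lemma~\ref{Nonlinear estimates} mix $W_j$ and $S^{\text{weak}}$ factors and one must avoid a circular dependence on the very quantity one is trying to bound.
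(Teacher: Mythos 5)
Your proposal is correct and follows essentially the same route as the paper: reduce everything to the equivalence of the single-component $S$-norm conditions (6) and (7) via the Kato-type estimates of Lemma \ref{Nonlinear estimates}, then upgrade to the $W_j$-norms by Strichartz, with $(1)\Leftrightarrow(2)\Leftrightarrow(3)$ handled by the standard Duhamel/small-data-tail argument that the paper simply cites from \cite{Mas16}. The only (harmless) difference is technical: for $(6)\Rightarrow(7)$ the paper uses a single estimate uniform in $T<T_{\max}$ rather than a subdivision, and for $(7)\Rightarrow(6)$ it picks one $\tau$ near $T_{\max}$ with $C\norm{v}_{S((\tau,T_{\max}))}\le\frac12$ instead of subdividing, exploiting the ``$\exists\tau$'' form of the statement.
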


\begin{proof}
$(1)\Leftrightarrow(2)\Leftrightarrow(3)$ is standard (see, for instance, \cite{Mas16}).
Let us prove they are also equivalent to from (4) to (7).
To this end, it suffices to show that $(6)$ and $(7)$ are equivalent.
It is because (3) is equivalent to ``$(6)$ and $(7)$." 
Further, once the above equivalence is established, the rest $(4)$ and $(5)$ are handled easily:
We have $(4)\Rightarrow(6)\Leftrightarrow(2)\Rightarrow(4)$ and $(5)\Rightarrow(7)\Leftrightarrow(2)\Rightarrow(5)$.

Suppose (6). Then, for any $T \in (0,T_{\max})$, one deduces from Proposition \ref{Strichartz estimates} and Lemma \ref{Nonlinear estimates} that
\[
	\norm{v}_{S([0,T))} \lesssim \norm{v_0}_{\mathcal{F}\dot{H}^\frac{1}{2}} + \norm{u}_{S([0,T))}^2.
\]
Here the implicit constant is independent of $T$. Hence, by letting $T \uparrow T_{\max}$ we obtain (7).

Next, suppose (7). Take $\tau \in (T_{\min},T_{\max})$ to be chosen later. For any $T \in (\tau,T_{\max})$, we see
\[
	\norm{u}_{S((\tau,T))} \leq C \norm{u(\tau)}_{\dot{X}^{1/2}_{1/2}(\tau)} + C\norm{u}_{S((\tau,T))}  \norm{v}_{S((\tau,T))},
\]
where the constant $C$ is independent of $\tau$ and $T$.
We now choose $\tau$ so that $C \norm{v}_{S((\tau,T_{\max}))}\le \frac{1}{2}$.
This is possible by the property (7).
Then, the above inequality implies that
\[
	\norm{u}_{S((\tau,T))} \leq 2C \norm{u(\tau)}_{\dot{X}^{1/2}_{1/2}(\tau)}.
\]
Since $T \in (\tau,T_{\max})$ is arbitrary, we obtain the result.
\end{proof}

We turn to a sufficient condition for scattering.
One of the simplest is by the smallness of the data.

\begin{proposition}[Small data scattering]\label{Small data scattering}
Let $(u_0,v_0)\in\mathcal{F}\dot{H}^\frac{1}{2}\times\mathcal{F}\dot{H}^\frac{1}{2}$ and let $(u,v)$ be a corresponding unique solution given in Theorem \ref{Th. Local well-posedness}. Then, we have the following.
\begin{itemize}
\item[(1)] There exists $\eta_1>0$ such that if $\|(e^{it\Delta}u_0,e^{\frac{1}{2}it\Delta}v_0)\|_{S\times S}\leq\eta_1$, then $(u,v)$ scatters.
\item[(2)] There exists $\eta_2>0$ such that if $\|(e^{it\Delta}u_0,e^{\frac{1}{2}it\Delta}v_0)\|_{W_1\times W_2}\leq\eta_2$, then $(u,v)$ scatters.
\item[(3)] There exists $\eta_3>0$ such that if $\|(u_0,v_0)\|_{\mathcal{F}\dot{H}^\frac{1}{2}\times\mathcal{F}\dot{H}^\frac{1}{2}}\leq\eta_3$, then $(u,v)$ scatters.
\end{itemize}
\end{proposition}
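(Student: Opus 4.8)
The plan is to derive all three smallness-implies-scattering statements from the local well-posedness theory (Theorem \ref{Th. Local well-posedness2}), the Strichartz estimates (Proposition \ref{Strichartz estimates}), the nonlinear estimates (Lemma \ref{Nonlinear estimates}), and the scattering criterion (Proposition \ref{Scattering criterion}). The three conditions are in fact essentially equivalent up to choosing the constants, because of the chain of embeddings $\norm{\,\cdot\,}_{S}\lesssim \norm{\,\cdot\,}_{W_j}$ (Lemma \ref{Embeddings}) and the Strichartz bound $\norm{e^{it\Delta}f}_{W_j}\lesssim\norm{f}_{\mathcal{F}\dot{H}^{1/2}}$; thus it suffices to prove (1) carefully and then deduce (2) and (3).

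For (1): assume $\norm{(e^{it\Delta}u_0,e^{\frac12it\Delta}v_0)}_{S(\mathbb{R})\times S(\mathbb{R})}\le\eta_1$ with $\eta_1\le\delta$, where $\delta$ is the universal constant of Proposition \ref{P:Slwp}. Applying Proposition \ref{P:Slwp} with $\tau=0$ and $I=\mathbb{R}$ yields an $S$-solution $(u,v)\in S(\mathbb{R})\times S(\mathbb{R})$ with $\norm{(u,v)}_{S(\mathbb{R})\times S(\mathbb{R})}\le 2\eta_1$. A persistence-of-regularity argument as in the proof of Theorem \ref{Th. Local well-posedness2} (subdividing $\mathbb{R}$ into finitely many intervals on which the $S$-norm of $(u,v)$ is $\le 1/(2c)$, then propagating the $\dot{X}_{1/2}^{1/2}\times\dot{X}_1^{1/2}$ and $W_1\times W_2$ bounds via Strichartz and Lemma \ref{Nonlinear estimates}) shows $(u,v)\in(C_t\dot{X}_{1/2}^{1/2}\cap W_1(\mathbb{R}))\times(C_t\dot{X}_1^{1/2}\cap W_2(\mathbb{R}))$ is the solution of Theorem \ref{Th. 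Local well-posedness}, globally defined with finite $S\times S$ norm. By the equivalence $(1)\Leftrightarrow(3)$ in Proposition \ref{Scattering criterion} (and its backward analogue), $(u,v)$ scatters in both time directions. Concretely, to produce the asymptotic states one estimates, for $t_2>t_1$,
\[
\norm{e^{-it_2\Delta}u(t_2)-e^{-it_1\Delta}u(t_1)}_{\mathcal{F}\dot{H}^{1/2}}
=\Bignorm{2i\int_{t_1}^{t_2}e^{-is\Delta}(v\bar u)(s)\,ds}_{\mathcal{F}\dot{H}^{1/2}}
\lesssim\norm{v\bar u}_{N_1((t_1,t_2))}\lesssim\norm{u}_{W_1((t_1,\infty))}\norm{v}_{W_2((t_1,\infty))},
\]
which tends to $0$ as $t_1\to\infty$ since the global $W_1\times W_2$ norm is finite; the same works for $v$ with $N_2$ and for $t\to-\infty$, giving the existence of $(u_\pm,v_\pm)\in\mathcal{F}\dot{H}^{1/2}\times\mathcal{F}\dot{H}^{1/2}$ with the convergence \eqref{E:definition of scattering}.

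For (2) and (3): given the hypothesis of (2), Lemma \ref{Embeddings} gives $\norm{(e^{it\Delta}u_0,e^{\frac12it\Delta}v_0)}_{S\times S}\lesssim\norm{(e^{it\Delta}u_0,e^{\frac12it\Delta}v_0)}_{W_1\times W_2}\le C\eta_2$, so choosing $\eta_2$ with $C\eta_2\le\eta_1$ reduces (2) to (1). Given the hypothesis of (3), Strichartz's estimate (Proposition \ref{Strichartz estimates}) gives $\norm{(e^{it\Delta}u_0,e^{\frac12it\Delta}v_0)}_{W_1\times W_2}\lesssim\norm{(u_0,v_0)}_{\mathcal{F}\dot{H}^{1/2}\times\mathcal{F}\dot{H}^{1/2}}\le C'\eta_3$, so choosing $\eta_3$ with $C'\eta_3\le\eta_2$ reduces (3) to (2). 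There is no serious obstacle here; the only point requiring a little care is making the constants chain correctly and invoking the global-in-time (rather than local) version of the contraction in Proposition \ref{P:Slwp}, which is legitimate precisely because the smallness is imposed on the full line. The "main" step is thus the persistence-of-regularity upgrade from the $S$-solution to a genuine solution with finite $W_1\times W_2$ norm, but this is identical to the argument already carried out in the proof of Theorem \ref{Th. Local well-posedness2}.
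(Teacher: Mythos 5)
Your proposal is correct and follows exactly the route the paper intends: the paper's proof is the one-line citation of Proposition \ref{P:Slwp}, Proposition \ref{Scattering criterion}, and Proposition \ref{Strichartz estimates}, and your argument simply fills in those same steps (global small-data contraction in $S\times S$, persistence-of-regularity upgrade to $W_1\times W_2$ as in Theorem \ref{Th. Local well-posedness2}, then the scattering criterion, with (2) and (3) reduced to (1) via Lemma \ref{Embeddings} and Strichartz).
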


This follows from Proposition \ref{P:Slwp}, Proposition \ref{Scattering criterion}, and Proposition \ref{Strichartz estimates}.


\subsection{Nonpositive energy implies failure of scattering}\label{subsec:Nonpositive energy}
In this subsection, we give
a proof of Theorem \ref{Nonpositive energy}.
To begin with, we will prove that if a data belongs $H^1 \times H^1$, in addition, then the corresponding solution given in Theorem \ref{Th. Local well-posedness} stays in $H^1 \times H^1$ and
the mass and the energy make sense and are conserved.
Furthermore, as is well-known, since our equation is mass-subcritical, the conservation of mass implies the solution is global.

\begin{proposition}\label{LWP H1}
For any $t_0\in\mathbb{R}$ and 
$(u_0,v_0)\in (\dot{X}_{1/2}^{1/2}(t_0) \cap H^1 )\times (\dot{X}_1^{1/2}(t_0)\cap H^1)$
there exists a unique time global solution $(u,v)\in(C_t(\mathbb{R};\dot{X}_{1/2}^{1/2}\cap H^1)\cap W_{1,\text{loc}}(\mathbb{R}))\!\times\!(C_t(\mathbb{R};\dot{X}_1^{1/2}\cap H^1)\cap W_{2,\text{loc}}(\mathbb{R}))$ to \eqref{NLS} with 
the initial condition $(u(t_0),v(t_0))=(u_0,v_0)$.
The solution have conserved mass and energy;
\[
	M[u(t),v(t)] = M[u_0,v_0],\quad E[u(t),v(t)] = E[u_0,v_0].
\]
Furthermore, if the solution scatters forward in time 
then  \eqref{E:definition of scattering} holds also in $H^1 \times H^1$
sense.
\end{proposition}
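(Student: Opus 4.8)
The plan is to run a parallel local theory in the space $(C_t(\dot X_{1/2}^{1/2}\cap H^1)\cap W_1)\times(C_t(\dot X_1^{1/2}\cap H^1)\cap W_2)$ by persistence of regularity, then upgrade to a global solution via the mass conservation, and finally pass the scattering convergence to the $H^1\times H^1$ topology. First I would observe that the $H^1$ and $\dot X^{1/2}$ flows are driven by the same Duhamel formula, so it suffices to control the extra $H^1$-regularity of the solution constructed in Theorem \ref{Th. Local well-posedness2}. To do this I would re-run the contraction/persistence argument of that theorem, but now tracking in addition the norms $\norm{u}_{L_t^\infty H^1\cap (\text{Strichartz for }H^1)}$ and $\norm{v}_{L_t^\infty H^1\cap(\text{Strichartz for }H^1)}$. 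The nonlinear terms $v\bar u$ and $u^2$ are estimated by the standard Strichartz scheme together with the fractional Leibniz/product estimates in the scale-invariant Strichartz spaces (the analogues of Lemma \ref{Nonlinear estimates} at regularity $1$ instead of $\tfrac12$ are of the same bilinear form and are proved identically), closing the estimate on a possibly shorter interval and then iterating. Uniqueness in the smaller space is immediate from uniqueness in Theorem \ref{Th. Local well-posedness2}.

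Next I would establish the conservation laws. On the $H^1\times H^1$ solution the quantities $M[u,v]$ and $E[u,v]$ are finite and the usual formal computations
\[
	\frac{d}{dt}M[u(t),v(t)] = 0, \qquad \frac{d}{dt}E[u(t),v(t)] = 0
\]
are justified by a standard regularization/approximation argument (mollify the data, use that the identities hold for smooth solutions, and pass to the limit using the local well-posedness and continuous dependence in $H^1\times H^1$ coming from the contraction just set up). Because $M$ controls $\norm{u}_{L^2}^2+\norm{v}_{L^2}^2$, and because the defect of $E$ from controlling $\norm{\nabla u}_{L^2}^2+\norm{\nabla v}_{L^2}^2$ is the cubic term $\int\Re(u^2\bar v)$, which by Gagliardo–Nirenberg (or Sobolev) is bounded by a power of $M$ times a strictly subquadratic power of $\norm{\nabla(u,v)}_{L^2}$ — this is exactly the mass-subcritical structure — one gets a global a priori bound on $\norm{(u,v)(t)}_{H^1\times H^1}$. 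Combined with the local theory and a continuation argument this yields $T_{\min}=-\infty$, $T_{\max}=\infty$, so the solution is global in $H^1\times H^1$ (and remains in $\dot X^{1/2}$, which is preserved by Theorem \ref{Th. Local well-posedness2} and a continuation argument on any compact time interval).

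Finally, for the scattering statement, suppose the solution scatters forward in time, so by Proposition \ref{Scattering criterion} we have $\norm{(u,v)}_{W_1([0,\infty))\times W_2([0,\infty))}<\infty$. The same bound, upgraded to $H^1$-valued Strichartz norms by the $H^1$-level nonlinear estimates just as above, shows that the Duhamel tails $\int_t^\infty e^{-is\Delta}(v\bar u)(s)\,ds$ and $\int_t^\infty e^{-\frac12 is\Delta}(u^2)(s)\,ds$ converge in $H^1$ as $t\to\infty$; hence $(e^{-it\Delta}u(t),e^{-\frac12 it\Delta}v(t))$ converges in $H^1\times H^1$ to some $(u_+,v_+)$, and this limit must coincide with the $\mathcal F\dot H^{1/2}$-limit. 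I expect the main obstacle to be the rigorous justification of the energy conservation (and more generally the continuous dependence in $H^1\times H^1$) for solutions that live in the non-unitary weighted framework of Definition \ref{D:sol}: one cannot simply invoke unitarity of the flow, so the approximation argument has to be carried out carefully using the modified local well-posedness of Theorem \ref{Th. Local well-posedness2}, simultaneously tracking both the $\dot X^{1/2}$ and the $H^1$ norms.
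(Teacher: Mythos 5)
Your proposal is correct and follows the same route the paper intends: the paper's entire proof of Proposition \ref{LWP H1} is the one-line remark that it is ``done by a persistence-of-regularity argument,'' and your outline---running the $H^1$-level Strichartz/nonlinear estimates in parallel with the weighted local theory of Theorem \ref{Th. Local well-posedness2}, justifying the conservation laws by regularization, obtaining globality, and passing the scattering limit to $H^1\times H^1$ via convergence of the Duhamel tails on intervals where the $S$-norm is small---is the standard execution of exactly that. The only minor divergence is that the paper deduces globality directly from mass conservation combined with the mass-subcritical ($L^2$-subcritical) local theory, whereas you route through energy conservation and Gagliardo--Nirenberg to bound the $H^1$ norm; both arguments are valid.
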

This is done by a persistence-of-regularity argument.
Now, we prove Theorem \ref{Nonpositive energy}.
\begin{proof}[Proof of Theorem \ref{Nonpositive energy}]
Suppose that a solution $(u,v)$ given in Proposition \ref{LWP H1} scatters forward in time.
Then, the limit \eqref{E:definition of scattering} holds also in $H^1 \times H^1$ sense.
Moreover, one sees from Proposition \ref{Scattering criterion} that
$\norm{(u,v)}_{S([0,\infty))\times S([0,\infty))}<\infty$.
Hence, one finds a sequence $\{t_n\}\subset [0,\infty)$, $t_n\to\infty$ as $n\to\infty$, such that
\[
	\norm{u(t_n)}_{L^{\frac92}_x} + \norm{v(t_n)}_{L^{\frac92}_x} \to 0
\]
as $n\to\infty$. Combining with the mass conservation, the above $L^\frac{9}{2}_x$ can be replaced by any $L^p_x$ for $2<p\le \frac{9}{2}$.
In particular, $p=3$ is allowed.
Hence,
\[
	\left| 2 \int_{\mathbb{R}^3} \Re (u(t_n)^2\overline{v(t_n)})  dx \right| \le 2\norm{u(t_n)}_{L^{3}_x}^2 \norm{v(t_n)}_{L^{3}_x}  \longrightarrow 0
\]
as $n\to\infty$. We deduce that
\[
	E[u_0,v_0] = \lim_{n\to\infty} E[u(t_n),v(t_n)] = \norm{\nabla u_+}_{L_x^2}^2 + \frac12 \norm{\nabla v_+}_{L_x^2}^2 \ge 0.
\]
Further, $E[u_0,v_0]=0$ implies $(u_+,v_+)=(0,0)$. By \eqref{E:definition of scattering} in $H^1$ and the mass conservation implies
$(u_0,v_0)=(0,0)$
\end{proof}

\subsection{Stability} In this subsection, we establish a stability result. 
Roughly speaking, the proposition implies that two solutions are also close each other 
if their initial data are close and the equations for them are close.

\begin{proposition}[Long time perturbation]\label{Long time perturbation}
Let $I$ be a time interval with $t_0\in I$ and $M>0$. Let $(\widetilde{u},\widetilde{v}):I\times\mathbb{R}^3\longrightarrow\mathbb{C}^2$ satisfy
\begin{equation}
\notag
\begin{cases}
\hspace{-0.4cm}&\displaystyle{i\partial_t\widetilde{u}+\Delta\widetilde{u}=-2\widetilde{v}\overline{\widetilde{u}}+e_1},\\
\hspace{-0.4cm}&\displaystyle{i\partial_t\widetilde{v}+\frac{1}{2}\Delta\widetilde{v}=-\widetilde{u}^2+e_2}
\end{cases}
\end{equation}
and
\begin{align*}
\|(\widetilde{u},\widetilde{v})\|_{W_1(I)\times W_2(I)}\leq M
\end{align*}
for some functions $e_1$, $e_2$. Let $(u_0,v_0)\in\dot{X}^{1/2}_{1/2}(t_0)\times\dot{X}^{1/2}_1(t_0)$ and let $(u,v)$ be a corresponding
solution to \eqref{NLS} with $(u(t_0),v(t_0))=(u_0,v_0)$ given by Theorem \ref{Th. Local well-posedness}.
There exist $\varepsilon_1=\varepsilon_1(M)>0$ and $c=c(M)>0$ such that if
\begin{align*}
\|(\widetilde{u}(t_0),\widetilde{v}(t_0))-(u_0,v_0)\|_{\dot{X}_{1/2}^{1/2}(t_0)\times\dot{X}_1^{1/2}(t_0)}+\|(e_1,e_2)\|_{N_1(I)\times N_2(I)}\le \varepsilon
\end{align*}
for some $0\le \varepsilon<\varepsilon_1$ then the maximal existence interval of $(u,v)$ contains $I$ and 
the solution satisfies
\begin{align*}
	\|(u,v)-(\widetilde{u},\widetilde{v})\|_{(L_t^\infty(I;\dot{X}_{1/2}^{1/2})\cap W_1(I))\times (L_t^\infty(I;\dot{X}_1^{1/2})\cap W_2(I))}
		\leq c\varepsilon.
\end{align*}
\end{proposition}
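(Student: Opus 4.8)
\textbf{Proof proposal for Proposition \ref{Long time perturbation}.}
The plan is to run the standard long-time perturbation argument from nonlinear Schr\"odinger theory, adapted to the system and to the function spaces $W_j$, $S$, $N_j$ introduced above. First I would reduce to a bootstrap on small subintervals. Using the hypothesis $\|(\widetilde u,\widetilde v)\|_{W_1(I)\times W_2(I)}\le M$ together with the embedding $\|\cdot\|_{S}\lesssim_j\|\cdot\|_{W_j}$ from Lemma \ref{Embeddings}, partition $I$ into finitely many consecutive intervals $I_1,\dots,I_J$ (with $J=J(M)$) on each of which $\|(\widetilde u,\widetilde v)\|_{S(I_k)\times S(I_k)}\le\eta$ for a small absolute $\eta$ to be fixed; the number $J$ depends only on $M$. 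Set $w:=u-\widetilde u$, $z:=v-\widetilde v$; these satisfy the Duhamel formula driven by $e_1,e_2$ and the difference of the nonlinearities, and the difference of nonlinearities is bilinear: schematically $v\bar u-\widetilde v\overline{\widetilde u}=z\bar u+\widetilde v\bar w$ and $u^2-\widetilde u^2=(u+\widetilde u)w$.

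The core of the argument is a per-interval estimate. On $I_k$, apply the Strichartz estimates of Proposition \ref{Strichartz estimates} (both the homogeneous and inhomogeneous parts, including the non-admissible Kato endpoint estimates that appear in the last two bounds of Lemma \ref{Nonlinear estimates}) to control $\|(w,z)\|_{(L_t^\infty\dot X_{1/2}^{1/2}\cap W_1)(I_k)\times(L_t^\infty\dot X_1^{1/2}\cap W_2)(I_k)}$ by the sum of three contributions: the free evolution of the data difference at the left endpoint of $I_k$, the error terms $\|(e_1,e_2)\|_{N_1(I)\times N_2(I)}$, and the bilinear terms. The bilinear terms are estimated via the nonlinear estimates of Lemma \ref{Nonlinear estimates}: each one pairs a factor of $(w,z)$ (measured in $W_1\times W_2$ on $I_k$) with a factor of either $(\widetilde u,\widetilde v)$ or $(u,v)$ measured in $S\times S$ on $I_k$. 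The $(\widetilde u,\widetilde v)$ factor contributes at most $\eta$; the $(u,v)=(\widetilde u+w,\widetilde v+z)$ factor contributes at most $\eta$ plus a term controlled by $\|(w,z)\|$. Choosing $\eta$ small enough absorbs the $\eta\|(w,z)\|$ terms into the left side, giving a closed recursion
\[
	A_k \le C\big( A_{k-1} + \varepsilon + A_k^2 \big),
\]
where $A_k$ denotes the difference norm on $I_k$ (together with the transported data difference at the right endpoint of $I_k$, which feeds into $A_{k+1}$ through another Strichartz application). Choosing $\varepsilon_1=\varepsilon_1(M)$ small, a continuity/bootstrap argument on each $I_k$ in turn (starting from $A_0\lesssim\varepsilon$) propagates the smallness, yielding $A_k\lesssim_J\varepsilon$, and summing over $k=1,\dots,J$ gives $\|(w,z)\|\le c(M)\varepsilon$ on all of $I$. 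That the maximal existence interval of $(u,v)$ contains $I$ follows because the a priori bound on $\|(u,v)\|_{W_1\times W_2}$ on each $I_k$ (hence on $I$) rules out the blow-up criterion of Proposition \ref{Scattering criterion}; one incorporates this into the same bootstrap so that no circularity arises.

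The main obstacle, as usual in these perturbation lemmas, is the bookkeeping of how the constant degrades with the number of intervals $J$: after $J$ steps of the recursion the final constant is roughly $C^J$, so one must take $\varepsilon_1$ exponentially small in $J$ (hence in $M$), and one must be careful that at each step the ``data at the new left endpoint'' is itself controlled — this requires an extra Strichartz estimate on $\dot X^{1/2}$ to transport the difference across the partition point, exactly as in the proof of Theorem \ref{Th. Local well-posedness2}. A secondary technical point is that the spaces $\dot X_m^{s,r}(t)$ are time-dependent, so ``continuity in $t$'' and the restriction/gluing of norms over subintervals must be handled through the $\mathcal M_m$–$\mathcal D$–$\mathcal F$ factorization; but this is routine given the estimates already collected in Section \ref{Preliminary}. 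No genuinely new idea beyond the single-equation case (cf.\ \cite{Mas16}) is needed; the only new feature is that the system couples $u$ and $v$, which is handled by always estimating the pair $(w,z)$ jointly in $W_1\times W_2$ and $S\times S$.
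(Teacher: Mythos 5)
The paper does not actually prove this proposition; it defers to \cite{Mas15}. Your proposal follows exactly the same route as that standard proof (subdivide $I$, per-interval Duhamel estimate for the difference $(w,z)=(u-\widetilde u,v-\widetilde v)$, bootstrap, transport the data difference across partition points, accept a constant of size $C^J$). So in outline you are reproducing the intended argument. There is, however, one concrete point where your bookkeeping does not close as written.

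You subdivide $I$ so that $\|(\widetilde u,\widetilde v)\|_{S(I_k)\times S(I_k)}\le\eta$ and then claim that every bilinear term pairs the difference, measured in $W_1\times W_2$, against a background factor measured in $S\times S$. But Lemma \ref{Nonlinear estimates} gives the symmetric bound
\[
\|v\overline{u}\|_{N_1}\lesssim\|u\|_{S^{\mathrm{weak}}}\|v\|_{W_2}+\|u\|_{W_1}\|v\|_{S^{\mathrm{weak}}},
\]
so you cannot choose which factor lands in the strong norm. Applied to $\widetilde v\,\overline{w}$ this produces the term $\|w\|_{S^{\mathrm{weak}}(I_k)}\|\widetilde v\|_{W_2(I_k)}$, and applied to $z\overline{u}$ it produces $\|u\|_{W_1(I_k)}\|z\|_{S^{\mathrm{weak}}(I_k)}$; under your subdivision $\|\widetilde v\|_{W_2(I_k)}$ and $\|\widetilde u\|_{W_1(I_k)}$ are only bounded by $M$, not by $\eta$, so the recursion picks up an unabsorbable term of size $M A_k$. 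The fix is standard but necessary: subdivide so that the \emph{strong} norms $\|(\widetilde u,\widetilde v)\|_{W_1(I_k)\times W_2(I_k)}\le\eta$ are small. This is possible with $J=J(M,\eta)$ because $W_j$ is an $L_t^{6,2}$-based space with finite time exponent (one needs the small observation that a Lorentz norm in time can be split into controllably many pieces of small norm, which is part of the machinery in \cite{Mas15,Mas16}). With that modification every bilinear term carries either a factor $\eta$ or a factor of the difference norm $A_k$, and your bootstrap closes as described. The rest of your outline (the $C^J$ degradation, the transport of the endpoint data via the homogeneous Strichartz estimate in $\dot X^{1/2}$, and deducing that $I\subset I_{\max}$ from the finiteness of the $W$-norms together with the local theory) is correct.
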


For the proof, see \cite{Mas15}.

\section{Properties of $L_{v_0}$ and $\ell_{v_0}^\dagger$}\label{Properties of L and l}

In this section, we collect properties of $L_{v_0}$ and $\ell_{v_0}^\dagger$.

\begin{proposition}\label{Control of L}
For any $v_0\in\mathcal{F}\dot{H}^\frac{1}{2}$, there exist $\varepsilon_1>0$ and $\delta>0$ such that
\begin{align*}
L_{v_0}(\varepsilon)\leq\|e^{\frac{1}{2}it\Delta}v_0\|_{W_2([0,\infty))}+c\delta\varepsilon
\end{align*}
holds for $0\le \varepsilon<\varepsilon_1$.
Here, the constants $\varepsilon_1>0$ and $\delta>0$ depend only on
$\|e^{\frac{1}{2}it\Delta}v_0\|_{W_2([0,\infty))}$.
In particular, $\ell^{\dagger}_{v_0}>0$ for any $v_0 \in \mathcal{F} \dot{H}^{1/2}.$
\end{proposition}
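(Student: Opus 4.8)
The plan is to run a single contraction/bootstrap argument directly on the Duhamel system for $(u,v)$ with data $u(0)=u_0$, $\|u_0\|_{\mathcal{F}\dot H^{1/2}}\le\varepsilon$, and $v(0)=v_0$, and to track the $W_1\times W_2$ norm of the solution on $[0,\infty)$ rather than on a short interval. The point is that $v$ "sees" $u$ only through the quadratic term $u^2$, while $u$ sees $v$ through $v\bar u$, so if $u$ is small the whole nonlinear interaction is controlled by a small parameter.

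First I would fix $v_0$ and set $A:=\|e^{\frac12 it\Delta}v_0\|_{W_2([0,\infty))}$, which is finite by the Strichartz estimate (Proposition \ref{Strichartz estimates}) since $v_0\in\mathcal{F}\dot H^{1/2}$. Subdivide $[0,\infty)=\bigcup_{k=1}^{K} I_k$ into finitely many intervals on each of which $\|e^{\frac12 it\Delta}v_0\|_{W_2(I_k)}\le\delta_0$, where $\delta_0$ is a small absolute constant to be chosen; the number $K$ of intervals depends only on $A$. On each $I_k$ I would run the contraction mapping argument of Theorem \ref{Th. Local well-posedness2}/Proposition \ref{P:Slwp} in the space
\begin{align*}
\{(u,v): \|u\|_{W_1(I_k)}\le 2\varepsilon_k,\ \|v\|_{W_2(I_k)}\le A_k\},
\end{align*}
where $\varepsilon_k$ is the $\mathcal{F}\dot H^{1/2}$-size of $u$ at the left endpoint of $I_k$ and $A_k$ is the corresponding linear-evolution bound for $v$ there. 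Using the Strichartz estimate together with the nonlinear estimates of Lemma \ref{Nonlinear estimates} ($\|v\bar u\|_{N_1}\lesssim\|u\|_{W_1}\|v\|_{W_2}$ and $\|u^2\|_{N_2}\lesssim\|u\|_{W_1}^2$) one gets, schematically,
\begin{align*}
\|u\|_{W_1(I_k)}&\le \varepsilon_k + c\,\|u\|_{W_1(I_k)}\|v\|_{W_2(I_k)},\\
\|v\|_{W_2(I_k)}&\le \|e^{\frac12 it\Delta}v(\cdot_k)\|_{W_2(I_k)} + c\,\|u\|_{W_1(I_k)}^2.
\end{align*}
For $\varepsilon$ small the first line forces $\|u\|_{W_1(I_k)}\le 2\varepsilon_k$ (absorbing the nonlinear term), and then the second line gives $\|v\|_{W_2(I_k)}\le \|e^{\frac12 it\Delta}v(\cdot_k)\|_{W_2(I_k)} + O(\varepsilon_k^2)$. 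Crucially, once $u$ is this small the feedback of the nonlinearity on the linear $v$-evolution is only $O(\varepsilon^2)$, so iterating over the $K$ intervals the quantity $\|e^{\frac12 it\Delta}v(\cdot_k)\|_{W_2(I_k)}$ never grows past $\delta_0+O(\varepsilon^2)$, keeping each local contraction valid, and the accumulated $u$-size stays comparable to $\varepsilon$. This yields a global solution with $\|u\|_{W_1([0,\infty))}\lesssim_A \varepsilon$ and $\|v\|_{W_2([0,\infty))}\le A + c\delta\varepsilon$ for a constant $\delta=\delta(A)$; by Proposition \ref{Scattering criterion} the solution scatters forward in time, so every admissible competitor in the definition of $L_{v_0}(\varepsilon)$ obeys this bound, giving $L_{v_0}(\varepsilon)\le A + c\delta\varepsilon$.

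The final assertion $\ell_{v_0}^\dagger>0$ is then immediate: for $\varepsilon<\varepsilon_1$ we have shown $L_{v_0}(\varepsilon)<\infty$, so by the definition \eqref{D:ellv0d} of $\ell_{v_0}^\dagger$ as $\sup\{\ell: L_{v_0}(\ell)<\infty\}$ we get $\ell_{v_0}^\dagger\ge\varepsilon_1>0$.

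The main obstacle is the bookkeeping in the interval subdivision: one must make sure that the errors $O(\varepsilon^2)$ fed forward from interval to interval do not accumulate to something larger than $\delta_0$ over the $K$ intervals (so that each local fixed-point step remains in its validity regime), and that the number $K$ and the threshold $\varepsilon_1$ depend only on $A=\|e^{\frac12 it\Delta}v_0\|_{W_2([0,\infty))}$ and not on finer features of $v_0$. This is exactly the kind of uniform-in-interval stability estimate furnished by Proposition \ref{Long time perturbation} (comparing $(u,v)$ to the trivial pair $(0,e^{\frac12 it\Delta}v_0)$, with error terms $e_1=-2(e^{\frac12 it\Delta}v_0)\bar u$... ), which could alternatively be invoked to shortcut the direct iteration; everything else is a routine application of Strichartz and the nonlinear estimates already recorded.
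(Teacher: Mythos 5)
Your argument is correct in substance, but it takes a more laborious route than the paper. The paper's entire proof is one line: apply Proposition \ref{Long time perturbation} with the comparison pair $(\widetilde{u},\widetilde{v})=(0,e^{\frac{1}{2}it\Delta}v_0)$, which is an \emph{exact} solution of \eqref{NLS} (both nonlinearities vanish when the first component is zero), so that $e_1=e_2=0$, $M=\|e^{\frac12 it\Delta}v_0\|_{W_2([0,\infty))}$, and the data discrepancy is exactly $\|u_0\|_{\mathcal{F}\dot{H}^{1/2}}\le\varepsilon$; the stability estimate then delivers global existence and $\|(u,v)-(0,e^{\frac12 it\Delta}v_0)\|_{W_1\times W_2}\le c\varepsilon$ at once. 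You instead re-derive this special case by hand: subdividing $[0,\infty)$ into $K=K(A)$ intervals where the free evolution of $v_0$ is small, closing a contraction on each, and propagating the smallness of $u$ and the $O(\varepsilon^2)$ Duhamel correction to $v$ across intervals. This works — the only bookkeeping you must be honest about is that the Strichartz constant multiplies at each of the $K$ steps, so the final bound on $\|u\|_{W_1([0,\infty))}$ is $C^K\varepsilon\lesssim_A\varepsilon$ rather than $2\varepsilon$, which is harmless since $K$ depends only on $A$ — and it is essentially a self-contained proof of the stability proposition in the case where the reference solution has vanishing first component. What the direct iteration buys is independence from Proposition \ref{Long time perturbation}; what it costs is length and the interval-subdivision bookkeeping for the Lorentz-modified norms.

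One small correction to your closing aside: when you invoke Proposition \ref{Long time perturbation} with the reference pair $(0,e^{\frac12 it\Delta}v_0)$, the error terms are $e_1=e_2=0$, not $e_1=-2(e^{\frac12 it\Delta}v_0)\overline{u}$. The expression you wrote involves the unknown true solution $u$ and arises in a different application of the stability lemma (as in the proof of Proposition \ref{P:l0lv0}, where the reference pair is $(u,v)-(0,e^{\frac12 it\Delta}v_0)$); the error in the stability framework must be computed from the reference pair alone.
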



\begin{proof}
Apply Proposition \ref{Long time perturbation} with $(\widetilde{u},\widetilde{v})=(0,e^{\frac{1}{2}it\Delta}v_0)$,
for which $e_1=e_2=0$.
\end{proof}

In the sequel, we call a function which values in an extended real numbers $\mathbb{R} \cup \{+\infty\}$ as an extended function.

\begin{definition}[Continuous extended function]\label{D:continuity}
We say that a non-decreasing extended function $f:\mathbb{R}\longrightarrow[0,\infty]$ is continuous at $x_0\in\mathbb{R}$ if $f$ satisfies the following:
\begin{itemize}
\item When $f(x_0)<\infty$, $f$ is continuous in the usual sense.
\item When $f(x_0)=\infty$ and $0<x_0<\infty$, either ``there exists $\varepsilon>0$ such that $f(x)=\infty$ for any $x\in(x_0-\varepsilon,x_0]$'' or ``$f(x)<\infty$ for any $x\in[0,x_0)$ and $\displaystyle\lim_{x\rightarrow x_0-0}f(x)=\infty$'' is true.
\end{itemize}
Moreover, we say a non-decreasing extended function $f$ is continuous if $f(x)$ is continuous at any $x_0\in\mathbb{R}$ in the above sense.
Furthermore, we say a
non-decreasing extended function $f$ defined on $[0,\infty)$ is continuous if there exists 
a non-decreasing extended function $\tilde{f}$ (defined on $\mathbb{R}$) such that $f(x)=\tilde{f}(x)$ for all $x\ge0$.
\end{definition}


\begin{proposition}[Properties of $L_{v_0}$]\label{Continuity of L}
For each fixed $v_0\in\mathcal{F}\dot{H}^\frac{1}{2}$,
the function $L_{v_0}$ is a non-decreasing continuous extended function defined on $[0,\infty)$.
\end{proposition}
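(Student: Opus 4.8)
The plan is to establish the two assertions — monotonicity and continuity in the sense of Definition \ref{D:continuity} — separately. Monotonicity of $L_{v_0}$ is immediate from its definition: enlarging the constraint parameter $\ell$ enlarges the admissible set of solutions over which the supremum of $\|(u,v)\|_{W_1\times W_2}$ is taken, so $\ell \mapsto L_{v_0}(\ell)$ is non-decreasing on $[0,\infty)$. Hence the content is really the continuity claim, which by Definition \ref{D:continuity} splits into two regimes according to whether $L_{v_0}(\ell_0)$ is finite or infinite at a given point $\ell_0$.

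First I would treat the regime where $L_{v_0}$ is finite near the point in question, say on an interval $[0,\ell_*)$ with $\ell_* = \ell_{v_0}^\dagger$ (using $\ell_{v_0}^\dagger > 0$ from Proposition \ref{Control of L}). Here I want ordinary continuity of a bounded monotone function, so it suffices to rule out jump discontinuities. Left-continuity at $\ell_0 \in (0,\ell_*)$: given a solution with $\|u(0)\|_{\mathcal{F}\dot{H}^{1/2}} \le \ell_0$ realizing $\|(u,v)\|_{W_1\times W_2}$ close to $L_{v_0}(\ell_0)$, the same solution is admissible for every $\ell < \ell_0$ with $\ell \ge \|u(0)\|_{\mathcal{F}\dot{H}^{1/2}}$, which forces $\sup_{\ell<\ell_0} L_{v_0}(\ell) \ge L_{v_0}(\ell_0)$; combined with monotonicity this gives $\lim_{\ell\to\ell_0-}L_{v_0}(\ell) = L_{v_0}(\ell_0)$, and one must also handle the borderline case by noting that a solution with $\|u(0)\|_{\mathcal{F}\dot{H}^{1/2}} = \ell_0$ exactly can be approximated by scaling down $u(0)$ slightly and invoking the stability estimate of Proposition \ref{Long time perturbation} (with $(\widetilde u,\widetilde v)=(u,v)$, $e_1=e_2=0$) to keep the $W_1\times W_2$ norm close. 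Right-continuity at $\ell_0 \in [0,\ell_*)$: given $\ell_n \downarrow \ell_0$ with $\ell_n < \ell_*$, pick near-optimizers $(u_n,v_n)$ for $L_{v_0}(\ell_n)$ with $\|u_n(0)\|_{\mathcal{F}\dot{H}^{1/2}} \le \ell_n$; I would rescale $u_n(0) \mapsto (\ell_0/\|u_n(0)\|_{\mathcal{F}\dot{H}^{1/2}})\, u_n(0)$ (or down to norm $\le \ell_0$) and again use Proposition \ref{Long time perturbation} — now with $\varepsilon \sim \ell_n - \ell_0 \to 0$ controlling the data difference — to deduce that the corresponding solution with the shrunk data, which is admissible for $L_{v_0}(\ell_0)$, has $W_1\times W_2$ norm at least $L_{v_0}(\ell_n) - o(1)$; this yields $\lim_{\ell_n\to\ell_0+}L_{v_0}(\ell_n) \le L_{v_0}(\ell_0)$, and monotonicity gives the reverse. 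One subtlety: applying stability requires an a priori bound $M$ on $\|(u_n,v_n)\|_{W_1\times W_2}$, which is exactly $L_{v_0}(\ell_n) < \infty$; since $\ell_n$ decreases to $\ell_0 < \ell_*$, eventually $L_{v_0}(\ell_n) \le L_{v_0}(\ell_0) + 1$ by monotonicity, so $M$ can be taken uniform — and then $\varepsilon_1(M)$ in Proposition \ref{Long time perturbation} is uniform, which is what makes the perturbative argument close.

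Second I would handle the point $\ell_* = \ell_{v_0}^\dagger$ itself, which is precisely the boundary between finiteness and infiniteness (this is how $\ell_{v_0}^\dagger$ is defined in \eqref{D:ellv0d}). Here Definition \ref{D:continuity} in the case $f(x_0)=\infty$ demands that either $L_{v_0} \equiv \infty$ on a left-neighborhood of $\ell_*$, or $L_{v_0}$ is finite on all of $[0,\ell_*)$ with $\lim_{\ell\to\ell_*-}L_{v_0}(\ell) = \infty$. By the very definition of $\ell_{v_0}^\dagger$ as the supremum of the set where $L_{v_0}$ is finite, together with monotonicity, $L_{v_0}$ is finite on $[0,\ell_{v_0}^\dagger)$; so only the second alternative is in play, and I must show the left-limit is $\infty$. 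Suppose not: then $\sup_{\ell<\ell_*}L_{v_0}(\ell) =: M < \infty$. I would derive a contradiction by a perturbative continuation argument: take any solution with $v(0)=v_0$ and $\|u(0)\|_{\mathcal{F}\dot{H}^{1/2}} \le \ell_* $ (or slightly larger, up to $\ell_* + \varepsilon_1(M')$ for suitable $M'$), write it as a perturbation of a solution with shrunk data of norm $\ell_* - \delta < \ell_*$ whose $W_1\times W_2$ norm is $\le M$, and apply Proposition \ref{Long time perturbation} to conclude its $W_1\times W_2$ norm is $\le M + c\varepsilon_1$; this bounds $L_{v_0}(\ell)$ uniformly for $\ell$ slightly above $\ell_*$, contradicting the maximality of $\ell_{v_0}^\dagger$. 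Finally, if $L_{v_0} \equiv \infty$ on $[0,\infty)$ — impossible since $\ell_{v_0}^\dagger > 0$ — or if $\ell_{v_0}^\dagger = \infty$, the function is finite everywhere and the first part already gives continuity on all of $[0,\infty)$.

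The main obstacle is the uniform quantitative control needed to push the stability estimate of Proposition \ref{Long time perturbation} through at the right-hand endpoint and near $\ell_{v_0}^\dagger$: one needs the threshold $\varepsilon_1(M)$ and the constant $c(M)$ to stay bounded as the data parameter approaches the critical value, which is only legitimate because the relevant $W_1\times W_2$ bound $M = L_{v_0}(\ell)$ remains finite and (by monotonicity) eventually bounded on the approaching sequence — so the argument is circular-looking but in fact rests on $\ell_{v_0}^\dagger$ being defined exactly as the last place finiteness holds. A secondary, more routine point is verifying that near-optimizing solutions can be rescaled in the data (via \eqref{002}, which changes $v_0$; here one must instead simply shrink $u(0)$ keeping $v(0)=v_0$ fixed, so it is a plain scalar multiplication of $u(0)$, not the system scaling) without destroying admissibility, and that the difference of the two data is $O(\delta)$ in $\dot{X}^{1/2}_{1/2}(0)$, which is clear since $e^{-it\Delta}\cdot|_{t=0}$ is the identity on $\mathcal{F}\dot{H}^{1/2} = \dot{X}^{1/2}_{1/2}(0)$.
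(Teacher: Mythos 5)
Your overall strategy coincides with the paper's: monotonicity is read off from the definition, continuity in the finite regime is obtained by comparing the solution with data $(u_{0},v_0)$ to the solution with data $(\lambda u_{0},v_0)$ for a scalar $\lambda$ slightly less than $1$ via Proposition \ref{Long time perturbation} (with $e_1=e_2=0$), and the divergence of the left limit at the threshold $\ell_{v_0}^\dagger$ is proved by the same contradiction argument.

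There is, however, one step whose justification is backwards and, as written, circular. In your right-continuity argument you take the reference solution in Proposition \ref{Long time perturbation} to be $(u_n,v_n)$, the solution with the \emph{larger} data norm $\ell_n>\ell_0$, so you need a uniform a priori bound $M\ge \|(u_n,v_n)\|_{W_1\times W_2}$; you assert that ``eventually $L_{v_0}(\ell_n)\le L_{v_0}(\ell_0)+1$ by monotonicity.'' Monotonicity gives the opposite inequality $L_{v_0}(\ell_n)\ge L_{v_0}(\ell_0)$, and the bound you invoke is precisely the right continuity you are trying to prove. The step can be repaired in two ways: (i) fix $\ell'\in(\ell_0,\ell_{v_0}^\dagger)$ with $\ell_n\le\ell'$ for all large $n$ and use $L_{v_0}(\ell_n)\le L_{v_0}(\ell')<\infty$, which is legitimate because finiteness of $L_{v_0}$ on all of $[0,\ell_{v_0}^\dagger)$ follows from the definition of $\ell_{v_0}^\dagger$ together with monotonicity; or (ii), as the paper does, run the perturbation in the other direction, taking as the reference the solution with the \emph{shrunk} data of norm at most $\ell_0$, whose $W_1\times W_2$ norm is bounded by $M=L_{v_0}(\ell_0)$ by hypothesis, so that $\varepsilon_1(M)$ and $c(M)$ depend only on $L_{v_0}(\ell_0)$ and one concludes $L_{v_0}(\ell)\le L_{v_0}(\ell_0)+c\delta$ directly by taking the supremum over all $u_{0,1}$ with $\|u_{0,1}\|_{\mathcal{F}\dot{H}^{1/2}}\le\ell$. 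With either repair, the rest of your plan --- left continuity via the same shrinking device, and the blow-up of the left limit at $\ell_{v_0}^\dagger$ by assuming $\sup_{\ell<\ell_{v_0}^\dagger}L_{v_0}(\ell)<\infty$ and contradicting the maximality of $\ell_{v_0}^\dagger$ --- goes through exactly as in the paper.
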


\begin{proof}
It is clear that $L_{v_0}$ is a non-decreasing extended function defined on $[0,\infty)$.

We prove the continuity in the sense of Definition \ref{D:continuity}.
It is obvious that
\[
	L_{v_0}(0) = \|e^{\frac{1}{2}it\Delta}v_0\|_{W_2([0,\infty))}<\infty.
\]
The continuity of $L_{v_0}(\ell)$ at $\ell=0$ holds by Proposition \ref{Control of L}.

Fix $\ell_0\in(0,\infty)$ such that $L_{v_0}(\ell_0)<\infty$.
Let us prove right continuity of $L_{v_0}(\ell)$ at $\ell=\ell_0$.
Pick $\varepsilon>0$.
Take $\delta>0$ so that $\delta<\varepsilon_1$ and $c\delta<\varepsilon$, where $\varepsilon_1=\varepsilon_1(L_{v_0}(\ell_0))$ and $c=c(L_{v_0}(\ell_0))$ are the constants given in Proposition \ref{Long time perturbation} with the choice $M=L_{v_0}(\ell_0)$.
Fix $\ell\in(\ell_0,\ell_0+\delta)$.
Then, for any $u_{0,1}\in\mathcal{F}\dot{H}^\frac{1}{2}$ satisfying $\|u_{0,1}\|_{\mathcal{F}\dot{H}^\frac{1}{2}}\leq\ell$, 
the function 
\[
u_{0,2}=\frac{\ell_0}{\ell_0+\delta} u_{0,1}
\]
satisfies $\|u_{0,2}\|_{\mathcal{F}\dot{H}^\frac{1}{2}}\leq\ell_0$ and $\|u_{0,1}-u_{0,2}\|_{\mathcal{F}\dot{H}^\frac{1}{2}}\leq\delta$.
Let $(u_1,v_1)$ and $(u_2,v_2)$ be two solutions to \eqref{NLS} with initial data $(u_{0,1},v_0)$ and $(u_{0,2},v_0)$, respectively.
Note that
\begin{align*}
\|(u_2,v_2)\|_{W_1([0,\infty))\times W_2([0,\infty))}\leq L_{v_0}(\ell_0)
\end{align*}
since $\|u_{0,2}\|_{\mathcal{F}\dot{H}^\frac{1}{2}}\leq\ell_0$.
Hence, we have
\begin{align*}
\|(u_1,v_1)-(u_2,v_2)\|_{W_1([0,\infty))\times W_2([0,\infty))}\leq c\delta<\varepsilon
\end{align*}
by Proposition \ref{Long time perturbation}.
Thus, it follows that
\begin{align*}
\|(u_1,v_1)\|_{W_1([0,\infty))\times W_2([0,\infty))}<\|(u_2,v_2)\|_{W_1([0,\infty))\times W_2([0,\infty))}+\varepsilon\leq L_{v_0}(\ell_0)+\varepsilon.
\end{align*}
Taking the supremum over such $u_{0,1} \in \mathcal{F}\dot{H}^\frac{1}{2}$,
we obtain
\begin{align*}
L_{v_0}(\ell) \le L_{v_0}(\ell_0)+\varepsilon
\end{align*}
for $\ell \in (\ell_0 , \ell_0 + \delta)$.
This shows the right continuity of $L_{v_0}(\ell)$ at $\ell = \ell_0$ together with non-decreasing property.
The left continuity is shown in a similar way. We omit the details.

Let us move on to the case $L_{v_0}(\ell_0)=\infty$.
We may suppose that
$\ell_0:=\inf\{\ell:L_{v_0}(\ell)=\infty\}$
otherwise continuity is trivial by definition. Under this assumption, we prove that 
$L_{v_0}(\ell)$ goes to infinity as $\ell\uparrow\ell_0$.
Assume that 
\[
	\displaystyle C_0:=\sup_{\ell<\ell_0}L_{v_0}(\ell)<\infty
\]
 for contradiction.
Let $\varepsilon_1=\varepsilon_1(C_0)$ be the constant given in Proposition \ref{Long time perturbation}.
Fix $0<\varepsilon<1$ so that $\varepsilon\ell_0<\varepsilon_1$.
Then, for any fixed $u_{0,1}\in\mathcal{F}\dot{H}^\frac{1}{2}$ with $\|u_{0,1}\|_{\mathcal{F}\dot{H}^\frac{1}{2}}\leq\ell_0$,
the function
\[
	u_{0,2}:=(1-\varepsilon)u_{0,1}
\]
satisfies
$\|u_{0,2}\|_{\mathcal{F}\dot{H}^\frac{1}{2}}\leq(1-\varepsilon)\ell_0$.
Let $(u_1,v_1)$ and $(u_2,v_2)$ be two solutions to \eqref{NLS} with initial data $(u_{0,1},v_0)$ and $(u_{0,2},v_0)$, respectively.
One sees that
\begin{align*}
	\|(u_2,v_2)\|_{W_1([0,\infty))\times W_2([0,\infty))}
		\leq L_{v_0}\left((1-\varepsilon)\ell_0\right)
		\leq C_0
		<\infty.
\end{align*}
In addition, we have
\begin{align*}
	\|u_{0,1}-u_{0,2}\|_{\mathcal{F}\dot{H}^\frac{1}{2}}
		=\varepsilon\|u_{0,1}\|_{\mathcal{F}\dot{H}^\frac{1}{2}}
		\leq \varepsilon\ell_0.
\end{align*}
Applying Proposition \ref{Long time perturbation}, we obtain
\begin{align*}
	\|(u_1,v_1)\|_{W_1([0,\infty))\times W_2([0,\infty))}
		\leq \|(u_2,v_2)\|_{W_1([0,\infty))\times W_2([0,\infty))}+c\varepsilon \ell_0
		\leq L_{v_0}\left((1-\varepsilon)\ell_0\right)+c\varepsilon \ell_0
		<\infty,
\end{align*}
where $c=c(C_0)$ is a constant.
Taking supremum over $u_{0,1}$, it follows that
\begin{align*}
L_{v_0}(\ell_0)\leq L_{v_0}\left((1-\varepsilon)\ell_0\right)+c\varepsilon\ell_0<\infty.
\end{align*}
This is a contradiction. 
\end{proof}

By using the non-decreasing property of $L_{v_0}$, we have the following:

\begin{proposition}[Another characterization of $\ell_{v_0}^\dagger$]\label{Another characterization}
The following identity holds.
\begin{align*}
\ell_{v_0}^\dagger=\inf\{\ell:L_{v_0}(\ell)=\infty\}
\end{align*}
for any $v_0\in\mathcal{F}\dot{H}^\frac{1}{2}$.
\end{proposition}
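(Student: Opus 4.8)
The plan is to reduce the identity to an elementary property of non-decreasing extended functions, using only the monotonicity of $L_{v_0}$ established in Proposition \ref{Continuity of L}. First I would set
\[
A:=\{\ell\ge0:L_{v_0}(\ell)<\infty\},\qquad B:=\{\ell\ge0:L_{v_0}(\ell)=\infty\},
\]
so that $A\cup B=[0,\infty)$, $A\cap B=\emptyset$, and $0\in A$ since $L_{v_0}(0)=\|e^{\frac12it\Delta}v_0\|_{W_2([0,\infty))}<\infty$. Because $L_{v_0}$ is non-decreasing, $A$ is downward closed (if $\ell\in A$ and $0\le\ell'\le\ell$ then $\ell'\in A$) and $B$ is upward closed. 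By definition $\ell_{v_0}^\dagger=\sup A$, so the claim to prove is $\sup A=\inf B$.

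Next I would dispose of the degenerate case $B=\emptyset$. In that case $L_{v_0}(\ell)<\infty$ for every $\ell\ge0$, hence $\ell_{v_0}^\dagger=\sup A=\sup[0,\infty)=\infty$, while $\inf B=\inf\emptyset=+\infty$ by the usual convention; thus both sides equal $\infty$ and the identity holds.

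Then, assuming $B\ne\emptyset$, I would put $m:=\inf B\in(0,\infty]$ and argue the two inequalities. For $\ell<m$ one has $\ell\notin B$ directly from the definition of the infimum, hence $\ell\in A$; thus $[0,m)\subset A$ and $\sup A\ge m$. Conversely, for $\ell>m$ there is some $\ell'\in B$ with $\ell'<\ell$, and then monotonicity gives $L_{v_0}(\ell)\ge L_{v_0}(\ell')=\infty$, so $\ell\in B$ and $\ell\notin A$; thus $A\subset[0,m]$ and $\sup A\le m$. Combining the two bounds yields $\ell_{v_0}^\dagger=\sup A=m=\inf B$.

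There is no genuine obstacle here; the only points needing a little care are the bookkeeping of the value $+\infty$ (the convention $\inf\emptyset=+\infty$, and the observation that $L_{v_0}(0)<\infty$ forces $\ell_{v_0}^\dagger>0$, consistent with $\ell_{v_0}^\dagger\in(0,\infty]$) and the fact that one never has to decide whether the endpoint $m$ itself lies in $A$ or in $B$, since only the strict inequalities $\ell<m$ and $\ell>m$ enter the argument. In particular, continuity of $L_{v_0}$ in the sense of Definition \ref{D:continuity} plays no role in this proposition — monotonicity alone suffices.
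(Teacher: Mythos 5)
Your proof is correct and is essentially the paper's argument made explicit: the paper simply observes that $\{\ell:L_{v_0}(\ell)<\infty\}$ and $\{\ell:L_{v_0}(\ell)=\infty\}$ form a Dedekind cut of $[0,\infty)$ by monotonicity of $L_{v_0}$, which is exactly your $\sup A=\inf B$ bookkeeping. Your side remark is also accurate — only the non-decreasing property of $L_{v_0}$ (plus $L_{v_0}(0)<\infty$) is used here, not its continuity, even though the paper cites the continuity proposition.
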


\begin{proof}
When $L_{v_0}(\ell)$ is finite for any $\ell>0$, we see that the both sides are infinite.
Otherwise, the two sets $\{\ell:L_{v_0}(\ell)<\infty\}$ and $\{\ell:L_{v_0}(\ell)=\infty\}$
give us a Dedekind cut of a totally ordered set $[0,\infty)$, by means of Propositions \ref{Control of L} and Proposition \ref{Continuity of L}.
\end{proof}

A consequence of the alternative characterization is that
\begin{equation}\label{E:Lvellv}
	L_{v_0}(\ell_{v_0}^\dagger) = \infty
\end{equation}
holds for any $v_0\in\mathcal{F}\dot{H}^\frac{1}{2}$. This follows from the continuity of $L_{v_0}$.
We also have the following:

\begin{lemma}\label{Comparison of ell and ell^dagger}
$\ell_{v_0} \ge \ell_{v_0}^\dagger$ for any $v_0 \in \mathcal{F} \dot{H}^\frac{1}{2}$.
\end{lemma}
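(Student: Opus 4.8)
The plan is to prove the slightly stronger-looking statement that $\|u_0\|_{\mathcal{F}\dot{H}^{1/2}} < \ell_{v_0}^\dagger$ implies $(u_0,v_0) \in \mathcal{S}_+$; once this is established, the lemma follows immediately by passing to the infimum over $\{u_0 : (u_0,v_0) \notin \mathcal{S}_+\}$ in the definition \eqref{D:ellv0} of $\ell_{v_0}$, with the convention $\inf\emptyset = +\infty$ (which also takes care of the case $\ell_{v_0}^\dagger = \infty$, where there is simply no admissible $u_0$ and $\ell_{v_0} = +\infty$ too).

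So fix $v_0 \in \mathcal{F}\dot{H}^{1/2}$ and $u_0 \in \mathcal{F}\dot{H}^{1/2}$ with $\ell := \|u_0\|_{\mathcal{F}\dot{H}^{1/2}} < \ell_{v_0}^\dagger$, and let $(u,v)$ be the solution to \eqref{NLS} with $(u(0),v(0)) = (u_0,v_0)$ provided by Theorem \ref{Th. Local well-posedness}. By Proposition \ref{Another characterization} one has $\ell_{v_0}^\dagger = \inf\{\ell' : L_{v_0}(\ell') = \infty\}$, so the strict inequality $\ell < \ell_{v_0}^\dagger$ forces $\ell$ to lie outside the set $\{\ell' : L_{v_0}(\ell') = \infty\}$, that is, $L_{v_0}(\ell) < \infty$. (It is here that the monotonicity and continuity of $L_{v_0}$, established in Propositions \ref{Control of L} and \ref{Continuity of L}, enter, via Proposition \ref{Another characterization}.)

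Finally, directly from the definition of $L_{v_0}(\ell)$ as a supremum taken over all solutions with second component $v_0$ and $\|u(0)\|_{\mathcal{F}\dot{H}^{1/2}} \le \ell$, our particular solution satisfies the a priori bound $\|(u,v)\|_{W_1([0,T_{\max})) \times W_2([0,T_{\max}))} \le L_{v_0}(\ell) < \infty$. By the equivalence $(1)\Leftrightarrow(2)$ in the scattering criterion, Proposition \ref{Scattering criterion} (applied with $\tau = 0$), this forces $(u,v)$ to be forward time-global and to scatter forward in time, i.e.\ $(u_0,v_0) \in \mathcal{S}_+$, as claimed. I do not expect any genuine obstacle: the argument is a short unwinding of the definitions together with the already-available alternative characterization of $\ell_{v_0}^\dagger$ and the scattering criterion, the only point deserving mild care being the bookkeeping of the possibility $\ell_{v_0}^\dagger = \infty$, handled by the convention $\inf\emptyset = +\infty$.
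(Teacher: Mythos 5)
Your proof is correct and is essentially the contrapositive of the paper's own argument, relying on the same two ingredients: Proposition \ref{Another characterization} (the alternative characterization of $\ell_{v_0}^\dagger$) and the equivalence $(1)\Leftrightarrow(2)$ of Proposition \ref{Scattering criterion}. The paper instead starts from near-minimizing non-scattering data for $\ell_{v_0}$ and deduces $L_{v_0}(\ell_{v_0}+\varepsilon)=\infty$, but this is the same argument read in the opposite direction.
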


\begin{proof}
If $\ell_{v_0}=\infty$, then Lemma \ref{Comparison of ell and ell^dagger} holds.
Let $\ell_{v_0}<\infty$.
By the definition of $\ell_{v_0}$, for any $\varepsilon>0$, there exists $u_0\in \mathcal{F}\dot{H}^\frac{1}{2}$ such that 
\begin{align*}
\|u_0\|_{\mathcal{F}\dot{H}^\frac{1}{2}}<\ell_{v_0}+\varepsilon
\end{align*}
holds and 
the corresponding solution $(u(t),v(t))$ with data $(u(0),v(0))=(u_0,v_0)$
does not scatter forward in time.
Since Proposition \ref{Scattering criterion} deduces $\|(u,v)\|_{W_1([0,T_{\max}))\times W_2([0,T_{\max}))}=\infty$ from
the failure of scattering, we obtain
\begin{align*}
L_{v_0}(\ell_{v_0}+\varepsilon)=\infty.
\end{align*}
This implies the relation $\ell_{v_0}^\dagger\leq\ell_{v_0}+\varepsilon$,
thanks to Proposition \ref{Another characterization}. Since $\varepsilon>0$ is arbitrary, we have the desired conclusion.
\end{proof}

The following is one of the key property to prove Theorem \ref{T:l0}.

\begin{proposition}\label{P:l0lv0}
$\ell_0^\dagger\geq\ell_{v_0}^\dagger$
for any $v_0\in\mathcal{F}\dot{H}^\frac{1}{2}$.
\end{proposition}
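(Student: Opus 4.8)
The plan is to prove that $L_0(\ell)<\infty$ for every $\ell$ with $0\le\ell<\ell_{v_0}^\dagger$; since $\ell_0^\dagger=\sup\{\ell:L_0(\ell)<\infty\}$ by \eqref{D:ellv0d}, this yields $\ell_0^\dagger\ge\ell_{v_0}^\dagger$, including the case where both sides are infinite. So fix such an $\ell$. Because $L_{v_0}$ is non-decreasing and, by Proposition \ref{Another characterization}, $\ell_{v_0}^\dagger=\inf\{\ell':L_{v_0}(\ell')=\infty\}$, we have $M:=L_{v_0}(\ell)<\infty$; put also $B:=\|e^{\frac{1}{2}it\Delta}v_0\|_{W_2([0,\infty))}\lesssim\|v_0\|_{\mathcal{F}\dot{H}^\frac{1}{2}}<\infty$ by Proposition \ref{Strichartz estimates} and Lemma \ref{Embeddings}. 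The task is then to bound $\|(U,V)\|_{W_1\times W_2}$, uniformly, over all solutions $(U,V)$ to \eqref{NLS} with $V(0)=0$ and $\|U(0)\|_{\mathcal{F}\dot{H}^\frac{1}{2}}\le\ell$; this gives $L_0(\ell)<\infty$.

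The structural input is the following. For $c,c'\in\mathbb{R}$ and a solution $(u,v)$ with data $(u_0,c'v_0)$, the pair $(\hat u,\hat v):=\bigl(u,\ v+e^{\frac{1}{2}it\Delta}((c-c')v_0)\bigr)$ has data $(u_0,cv_0)$, and — using that $(0,e^{\frac{1}{2}it\Delta}((c-c')v_0))$ is itself an exact solution to \eqref{NLS} — a Duhamel computation shows it solves
\[
	i\partial_t\hat u+\Delta\hat u=-2\hat v\overline{\hat u}+e_1,\qquad i\partial_t\hat v+\frac{1}{2}\Delta\hat v=-\hat u^2,\qquad e_1:=2(c-c')\,(e^{\frac{1}{2}it\Delta}v_0)\,\overline{u},
\]
with $\|(\hat u,\hat v)\|_{W_1\times W_2(I)}\le\|(u,v)\|_{W_1\times W_2(I)}+|c-c'|B$ and, by Lemma \ref{Nonlinear estimates}, $\|e_1\|_{N_1(I)}\lesssim|c-c'|\,B\,\|u\|_{W_1(I)}$ on every time interval $I$. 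Hence, whenever $\|(u,v)\|_{W_1\times W_2}$ is controlled, $(\hat u,\hat v)$ is a bounded approximate solution whose error is small once $|c-c'|$ is small, and Proposition \ref{Long time perturbation} lets us compare the genuine solution with data $(u_0,cv_0)$ to it. (The case $c=1$, $c'$ replaced by the parameter $1$ in the reverse direction, is just the comparison of $(U,V)$ with the solution carrying data $(U(0),v_0)$.)

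I would then conclude by a connectedness argument along the segment $\{cv_0:c\in[0,1]\}$. Set $\mathcal{C}:=\{c\in[0,1]:L_{cv_0}(\ell)<\infty\}$. It contains $c=1$, since $L_{v_0}(\ell)=M<\infty$. It is open: if $L_{c'v_0}(\ell)=M'<\infty$, then taking $(u,v)$ to be the solution with data $(u_0,c'v_0)$ for any $\|u_0\|_{\mathcal{F}\dot{H}^\frac{1}{2}}\le\ell$ (so $\|(u,v)\|_{W_1\times W_2}\le M'$ and, by Proposition \ref{Scattering criterion}, the solution is forward time-global), the comparison above combined with Proposition \ref{Long time perturbation} gives $\|(u^{(c)},v^{(c)})\|_{W_1\times W_2}\le K(M',B)$ for the solution $(u^{(c)},v^{(c)})$ with data $(u_0,cv_0)$, uniformly in $u_0$, whenever $|c-c'|\,BM'<\varepsilon_1(M'+B)$; hence $L_{cv_0}(\ell)\le K(M',B)<\infty$ for such $c$. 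If $\mathcal{C}$ is also closed, then $\mathcal{C}=[0,1]$, so $0\in\mathcal{C}$, i.e.\ $L_0(\ell)<\infty$, as desired. The main obstacle is exactly this closedness: one must rule out that $L_{cv_0}(\ell)$ blows up at an interior parameter, which is delicate because the radius $|c-c'|<\varepsilon_1(M'+B)/(BM')$ on which the perturbative step is valid shrinks as $M'=L_{c'v_0}(\ell)$ grows. Producing the needed uniform-in-$c$ a priori bound on $L_{cv_0}(\ell)$ near a limiting parameter — via the subdivision/continuity machinery of \cite{Mas15}, akin to the proof of Proposition \ref{Continuity of L} — is where I expect the real work to lie.
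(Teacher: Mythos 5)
Your reduction to showing $L_0(\ell)<\infty$ for every $\ell<\ell_{v_0}^\dagger$ is correct, and the algebraic comparison you set up (subtracting/adding the free evolution $e^{\frac{1}{2}it\Delta}(cv_0)$ to pass between data $(u_0,c'v_0)$ and $(u_0,cv_0)$, with error $e_1\sim(c-c')(e^{\frac{1}{2}it\Delta}v_0)\overline{u}$) is exactly the identity the paper exploits. But the proposal has a genuine gap, which you yourself flag: the closedness of $\mathcal{C}=\{c\in[0,1]:L_{cv_0}(\ell)<\infty\}$. Openness of $\mathcal{C}$ only gives control on a radius $|c-c'|\lesssim\varepsilon_1(M')/(BM')$ that degenerates as $M'=L_{c'v_0}(\ell)$ grows, so nothing prevents $L_{cv_0}(\ell)$ from blowing up as $c$ decreases to some interior $c_*>0$, and at $c_*$ there is no bounded reference solution to perturb from. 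The "subdivision/continuity machinery" you invoke (as in Proposition \ref{Continuity of L}) does not help here either: that argument establishes continuity of $\ell\mapsto L_{v_0}(\ell)$ for a \emph{fixed} $v_0$ by rescaling the $u$-component, and there is no analogous dilation acting on the parameter $c$ multiplying $v_0$. So the homotopy, as written, does not close.

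The paper's proof avoids the homotopy entirely and gets the needed smallness from a different source. Arguing by contradiction from $\ell_0^\dagger<\ell_{v_0}^\dagger$, it uses the scaling invariance of \eqref{NLS} and of $\mathcal{F}\dot{H}^{1/2}$ (which preserves the constraint $v(0)=0$) to produce data $(U_{0,n},0)$ with $\|U_{0,n}\|_{\mathcal{F}\dot{H}^{1/2}}\le\frac{\ell_0^\dagger+\ell_{v_0}^\dagger}{2}$ whose solutions already satisfy $\|(U_n,V_n)\|_{W_1([0,n^{-1}])\times W_2([0,n^{-1}])}\ge n$, i.e.\ the norm blow-up is concentrated in an arbitrarily short initial time window. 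It then compares, on a fixed short interval $[0,\tau]$, the solution $(U_n,V_n)$ with the approximate solution $(u_n,v_n)-(0,e^{\frac{1}{2}it\Delta}v_0)$ built from the (globally bounded, since $L_{v_0}<\infty$ at this level) solution with data $(U_{0,n},v_0)$. The error is exactly your $e_1$, but its $N_1([0,\tau])$ norm is made small not by shrinking a coefficient $|c-c'|$ — the full jump from $v_0$ to $0$ is taken in one step — but by shrinking $\tau$, since $\|e^{\frac{1}{2}it\Delta}v_0\|_{W_2([0,\tau])}\to0$ as $\tau\downarrow0$ uniformly in $n$. Proposition \ref{Long time perturbation} then bounds $\|(U_n,V_n)\|_{W_1([0,\tau])\times W_2([0,\tau])}$ uniformly in $n$, contradicting the concentrated blow-up. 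This time-localization-plus-rescaling step is the idea missing from your argument, and it is precisely what replaces the uniform-in-$c$ a priori bound you could not produce.
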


\begin{proof}
Fix $v_0\in\mathcal{F}\dot{H}^\frac{1}{2}$. We assume that $\ell_0^\dagger<\ell_{v_0}^\dagger$ for contradiction.
Then, we have
\[
	L_0 \Bigl(\frac{\ell_0^\dagger+\ell_{v_0}^\dagger}{2}\Bigr)=\infty ,\quad L_{v_0} \Bigl(\frac{\ell_0^\dagger+\ell_{v_0}^\dagger}{2}\Bigr)<\infty .
\]
Using the fact that $L_0 (\frac{\ell_0^\dagger+\ell_{v_0}^\dagger}{2})=\infty $ and the scaling argument,
one can take data $\{(U_{0,n},0)\}$ so that the corresponding solution $(U_n(t),V_n(t))$ to \eqref{NLS} satisfies
\begin{align}
\left\|U_{0,n}\right\|_{\mathcal{F}\dot{H}^\frac{1}{2}}\leq\frac{\ell_0^\dagger+\ell_{v_0}^\dagger}{2} \label{037}
\end{align}
and
\begin{equation}\label{E:l0pf1}
\left\|(U_n,V_n)\right\|_{W_1([0,n^{-1}])\times W_2([0,n^{-1}])}\geq n
\end{equation}
for all $n\ge1$.
Let $(u_{n},v_{n})$ be another solution  to \eqref{NLS} with the initial data $(U_{0,n},v_0)$. 
Since $L_{v_0} (\frac{\ell_0^\dagger+\ell_{v_0}^\dagger}{2})<\infty $,
one sees from \eqref{037} that $(u_{n},v_{n})$ is global in time and
\begin{align*}
\left\|(u_{n},v_{n})\right\|_{W_1([0,\infty))\times W_2([0,\infty))}\leq L_{v_0}\Bigl(\frac{\ell_0^\dagger+\ell_{v_0}^\dagger}{2}\Bigr)<\infty.
\end{align*}
We now set $(\widetilde{u}_{n},\widetilde{v}_{n})=(u_{n},v_{n})-(0,e^{\frac{1}{2}it\Delta}v_0)$. Then, $(\widetilde{u}_{n},\widetilde{v}_{n})$ solves
\begin{equation*}
\left\{
\begin{aligned}
&i\partial_t\widetilde{u}_{n}+\Delta\widetilde{u}_{n}+2\widetilde{v}_{n}\overline{\widetilde{u}_{n}}=-2(e^{\frac{1}{2}it\Delta}v_0)\overline{u_{n}},\\
&i\partial_t\widetilde{v}_{n}+\frac{1}{2}\Delta\widetilde{v}_{n}+\widetilde{u}_{n}^2=0,\\
&(\widetilde{u}_{n}(0),\widetilde{v}_{n}(0))= (U_{0,n},0)
\end{aligned}
\right.
\end{equation*}
and so it is an approximate solution to \eqref{NLS} with an error
\begin{align*}
	e_1 = -2(e^{\frac{1}{2}it\Delta}v_0)\overline{u_{n}},\quad e_2 = 0.
\end{align*}
Take $\tau>0$ and set $I=[0,\tau]$. We have
\begin{align*}
	\norm{(e_1,e_2)}_{N_1(I)\times N_2(I)} \lesssim \norm{e^{\frac{1}{2}it\Delta}v_0}_{W_2(I)} \norm{u_n}_{W_1(I)} \le
	\norm{e^{\frac{1}{2}it\Delta}v_0}_{W_2(I)} L_{v_0}\Bigl(\frac{\ell_0^\dagger+\ell_{v_0}^\dagger}{2}\Bigr).
\end{align*}
The right hand side is independent of $n$, and tends to zero as $\tau \downarrow 0$.

Now we apply the Proposition \ref{Long time perturbation} with $M=L_{v_0}(\frac{\ell_0^\dagger+\ell_{v_0}^\dagger}{2})+\|e^{\frac{1}{2}it\Delta}v_0\|_{W_2([0,\infty))}$.
Choose $\tau$ sufficiently small so that the above upper bound of the error becomes smaller than the corresponding $\varepsilon_1$.
Since $(U_n,V_n)$ is a solution with the same initial data as $(\widetilde{u}_{n},\widetilde{v}_{n})$,
we see from Proposition \ref{Long time perturbation} that $(U_n,V_n)$ extends up to time $\tau$ and obeys the bound
\begin{align*}
	\norm{(U_n,V_n)}_{W_1(I)\times W_2(I)}
	\le \norm{(\widetilde{u}_n,\widetilde{v}_n)}_{W_1(I)\times W_2(I)} + C\varepsilon_1
	\le L_{v_0}\Bigl(\frac{\ell_0^\dagger+\ell_{v_0}^\dagger}{2}\Bigr) + \|e^{\frac{1}{2}it\Delta}v_0\|_{W_2([0,\infty))} + C\varepsilon_1.
\end{align*}
However, this contradicts with \eqref{E:l0pf1} for large $n$.
\end{proof}


\section{Linear profile decomposition}\label{Sec:Linear profile decomposition}

In this section, we obtain a linear profile decomposition (Theorem \ref{Linear profile decomposition}).


\subsection{Linear profile decomposition} 


Let us first introduce several operators and give a notion of deformation, which is a 
specific class of bounded operator.

\begin{definition}[Operators]
We define the following operators.
\begin{itemize}
\item[(1)] A dilation
\begin{align*}
(D(h)(f,g))(x)=(f_{\{h\}},g_{\{h\}})=(h^2f(hx),h^2g(hx)),\ \ \ h\in2^{\mathbb{Z}},
\end{align*}
\item[(2)] A translation in Fourier space
\begin{align*}
(T(\xi)(f,g))(x)=(e^{ix\cdot\xi}f(x),e^{2ix\cdot\xi}g(x)),\ \ \ \xi\in\mathbb{R}^3.
\end{align*}
\end{itemize}
\end{definition}

\begin{definition}
We say that a bounded operator
\begin{align*}
\mathcal{G}=(\mathcal{G}_{1},\mathcal{G}_{2})=T(\xi)D(h),\ \ \ (\xi,h)\in\mathbb{R}^3\!\times\!2^{\mathbb{Z}}
\end{align*}
on $\mathcal{F}\dot{H}^\frac{1}{2}\!\times\!\mathcal{F}\dot{H}^\frac{1}{2}$
is called a deformation in $\mathcal{F}\dot{H}^\frac{1}{2}\!\times\!\mathcal{F}\dot{H}^\frac{1}{2}$.
Let a set $G\subset\mathcal{L}(\mathcal{F}\dot{H}^\frac{1}{2}\!\times\!\mathcal{F}\dot{H}^\frac{1}{2})$ be composed of all deformations.
\end{definition}

\begin{remark}
$G$ is a group with the functional composition as a binary operation.
Id\,$=T(0)D(1)\in G$ is the identity element.
For $\mathcal{G}=T(\xi)D(h)$, the inverse element is $\mathcal{G}^{-1}=T(-\frac{\xi}{h})D(\frac{1}{h})\in G$.
\end{remark}





Next, we introduce a class of families of deformations.

\begin{definition}[A vanishing family]
We say that a family of deformations $\{\mathcal{G}_n=T(\xi_n)D(h_n)\}_n\subset G$ is vanishing if
$
|\xi_n|+|\log h_n|\longrightarrow\infty\ \text{ as }\ n\rightarrow\infty
$
holds.
\end{definition}

\begin{lemma}\label{Symmetry vanishing}
A family $\{\mathcal{G}_n\}_n\subset G$ is vanishing if and only if a family of inverse elements $\{\mathcal{G}_n^{-1}\}_n$ is vanishing.
\end{lemma}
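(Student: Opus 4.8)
The key point is the explicit formula for the inverse recorded in the Remark preceding the lemma: if $\mathcal{G}_n = T(\xi_n)D(h_n)$ then $\mathcal{G}_n^{-1} = T(-\xi_n/h_n)D(1/h_n)$, so the ``parameters'' of $\mathcal{G}_n^{-1}$ are $(\tilde\xi_n,\tilde h_n) = (-\xi_n/h_n,\,1/h_n)$, and $\{\mathcal{G}_n^{-1}\}_n$ is vanishing precisely when $|\xi_n|/h_n + |\log h_n| \to \infty$. Thus the statement reduces to the elementary real-variable claim
\[
	|\xi_n| + |\log h_n| \to \infty \quad \Longleftrightarrow \quad \frac{|\xi_n|}{h_n} + |\log h_n| \to \infty .
\]
Since $\mathcal{G}\mapsto\mathcal{G}^{-1}$ is an involution on $G$ (again by the Remark), it suffices to prove one implication; the other follows by applying it to the family $\{\mathcal{G}_n^{-1}\}_n$.

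First I would prove the contrapositive of ``$\{\mathcal{G}_n\}_n$ vanishing $\Rightarrow$ $\{\mathcal{G}_n^{-1}\}_n$ vanishing''. Assume $\{\mathcal{G}_n^{-1}\}_n$ is not vanishing. Then along some subsequence (not relabeled) the quantity $|\xi_n|/h_n + |\log h_n|$ is bounded, say by $C$. In particular $|\log h_n|\le C$, so $e^{-C}\le h_n \le e^{C}$ along the subsequence; consequently $|\xi_n| = h_n\cdot(|\xi_n|/h_n) \le e^{C}\cdot C$ is bounded as well. Hence $|\xi_n| + |\log h_n|$ stays bounded along this subsequence, which shows $\{\mathcal{G}_n\}_n$ is not vanishing.

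Applying the implication just proved to the family $\{\mathcal{G}_n^{-1}\}_n$ and using $(\mathcal{G}_n^{-1})^{-1} = \mathcal{G}_n$ gives the reverse implication, completing the equivalence. There is no real obstacle here; the only thing worth flagging is the observation that boundedness of $|\log h_n|$ is what couples the two conditions — it forces $h_n$ itself to be bounded above and below, so the factor $h_n$ relating $|\xi_n|$ and $|\xi_n|/h_n$ is harmless.
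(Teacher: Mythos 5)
Your proof is correct and follows exactly the route the paper takes: the paper's entire proof is the one-line observation that $(T(\xi_n)D(h_n))^{-1}=T(-\xi_n/h_n)D(1/h_n)$, and you have simply filled in the elementary real-variable verification (boundedness of $|\log h_n|$ controls the factor $h_n$ relating $|\xi_n|$ and $|\xi_n|/h_n$). No issues.
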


\begin{proof}
It is clear from $(T(\xi_n)D(h_n))^{-1}=T(-\frac{\xi_n}{h_n})D(\frac{1}{h_n})$.
\end{proof}

The following characterization of the vanishing family is useful.

\begin{proposition}\label{Equivalent vanishing 2}
For a family $\{\mathcal{G}_n\}_n\subset G$ of deformations, the following three statements are equivalent.
\begin{itemize}
\item[(1)] $\{\mathcal{G}_n\}_n$ is vanishing.
\item[(2)] For any $(\phi,\psi)\in\mathcal{F}\dot{H}^\frac{1}{2}\!\times\!\mathcal{F}\dot{H}^\frac{1}{2}$, $\mathcal{G}_n(\phi,\psi)\xrightharpoonup[]{\hspace{0.4cm}}(0,0)$ weakly in $\mathcal{F}\dot{H}^\frac{1}{2}\!\times\!\mathcal{F}\dot{H}^\frac{1}{2}$ as $n\rightarrow\infty$.
\item[(3)] For any subsequence $n_k$ of $n$, there exists a sequence $\{(f_k,g_k)\}_k\subset\mathcal{F}\dot{H}^\frac{1}{2}\!\times\!\mathcal{F}\dot{H}^\frac{1}{2}$ and a subsequence $k_l$ of $k$ such that $(f_{k_l},g_{k_l})\xrightharpoonup[]{\hspace{0.4cm}}(0,0)$ and $\mathcal{G}_{n_{k_l}}^{-1}(f_{k_l},g_{k_l})\xrightharpoonup[]{\hspace{0.4cm}}(\phi,\psi)\neq(0,0)$ weakly in $\mathcal{F}\dot{H}^\frac{1}{2}\!\times\!\mathcal{F}\dot{H}^\frac{1}{2}$ as $l\rightarrow\infty$.
\end{itemize}
\end{proposition}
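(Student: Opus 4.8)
The plan is to prove the cyclic chain of implications $(1)\Rightarrow(2)\Rightarrow(3)\Rightarrow(1)$, working on the Fourier side throughout since each of the operators $D(h)$ and $T(\xi)$ has a transparent action there: conjugating by $\mathcal F$, the dilation $D(h)$ becomes $f\mapsto h^{-1}\widehat f(\xi/h)$ (an $\dot H^{1/2}$-isometry up to the weight bookkeeping), and $T(\xi_n)$ becomes translation of the Fourier variable by $\xi_n$. So in each component $\mathcal G_n$ acts, after Fourier transform, essentially as a combination of an $L^2$-isometric dilation (in the $\dot H^{1/2}$ sense) and a translation. I would record this normalization in one or two lines at the start, because all three implications reduce to the standard dichotomy ``a sequence of isometric translation–dilations either stays in a compact family or escapes to weak zero.''

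For $(1)\Rightarrow(2)$: fix $(\phi,\psi)\in\mathcal F\dot H^{1/2}\times\mathcal F\dot H^{1/2}$. By density I may take $\widehat\phi,\widehat\psi$ to be, say, compactly supported smooth functions, using that $\{\mathcal G_n\}$ is uniformly bounded. Then $\widehat{\mathcal G_{1,n}\phi}$ is supported in $h_n^{-1}(\mathrm{supp}\,\widehat\phi) + \xi_n$ (roughly), and one computes $\langle \mathcal G_n(\phi,\psi), (\eta_1,\eta_2)\rangle_{\mathcal F\dot H^{1/2}}$ against a fixed test pair; if $|\log h_n|\to\infty$ along a subsequence the Fourier support runs off to $0$ or $\infty$ in scale and the pairing vanishes by dominated convergence, while if $h_n$ stays bounded away from $0$ and $\infty$ then $|\xi_n|\to\infty$ forces the support to translate to infinity, again killing the pairing. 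Passing to subsequences to separate these cases and then using that every subsequence has the same limit gives weak convergence of the full sequence.

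For $(2)\Rightarrow(3)$: given a subsequence $n_k$, take any fixed $(\phi,\psi)\neq(0,0)$ and set $(f_k,g_k):=\mathcal G_{n_k}(\phi,\psi)$. By $(2)$, $(f_k,g_k)\rightharpoonup(0,0)$, while $\mathcal G_{n_k}^{-1}(f_k,g_k)=(\phi,\psi)$ converges (strongly, hence weakly) to $(\phi,\psi)\neq(0,0)$; pick $k_l=l$. For $(3)\Rightarrow(1)$ I argue by contrapositive: if $\{\mathcal G_n\}$ is not vanishing, then $|\xi_n|+|\log h_n|$ is bounded along some subsequence $n_k$, so after a further subsequence $\xi_{n_k}\to\xi_\infty$ and $h_{n_k}\to h_\infty\in 2^{\mathbb Z}$ (the discreteness of $2^{\mathbb Z}$ makes $h$ eventually constant), hence $\mathcal G_{n_k}\to\mathcal G_\infty:=T(\xi_\infty)D(h_\infty)$ strongly in operator norm, and likewise $\mathcal G_{n_k}^{-1}\to\mathcal G_\infty^{-1}$. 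Then for any $(f_k,g_k)\rightharpoonup(0,0)$ one gets $\mathcal G_{n_k}^{-1}(f_k,g_k)\rightharpoonup\mathcal G_\infty^{-1}(0,0)=(0,0)$ (norm-convergent operators preserve weak limits), contradicting the requirement in $(3)$ that the weak limit be nonzero; this contradiction must already appear at the level of the subsequence $n_k$ chosen in $(3)$, so $(1)$ holds.

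The main obstacle I anticipate is making $(1)\Rightarrow(2)$ genuinely rigorous rather than hand-wavy: one must carefully quantify how the Fourier support of $\widehat{\mathcal G_{j,n}\phi}$ moves under the two competing escape mechanisms ($h_n\to 0$ or $\infty$ versus $|\xi_n|\to\infty$), and treat the mixed case where, say, $h_n\to 0$ while $h_n\xi_n$ stays bounded. The clean way around this is the subsequence-splitting trick: along any subsequence, pass to a further subsequence on which exactly one of ``$\log h_n\to\pm\infty$'' or ``$h_n$ bounded, $|\xi_n|\to\infty$'' holds (this covers all cases because $|\xi_n|+|\log h_n|\to\infty$), prove the pairing vanishes in each, and conclude by the usual ``every subsequence has a sub-subsequence converging to $0$'' argument. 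The remaining estimates — boundedness of $\{\mathcal G_n\}$ in $\mathcal L(\mathcal F\dot H^{1/2}\times\mathcal F\dot H^{1/2})$ and the density reduction — are routine and I would not belabor them.
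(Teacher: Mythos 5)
The paper does not prove this proposition; it only cites \cite{Mas16,MasSeg18}, so there is no in-paper argument to compare against. Your overall route — the cycle $(1)\Rightarrow(2)\Rightarrow(3)\Rightarrow(1)$, with $(2)\Rightarrow(3)$ by taking $(f_k,g_k)=\mathcal{G}_{n_k}(\phi,\psi)$ and $(3)\Rightarrow(1)$ by compactness of bounded parameters (using that $2^{\mathbb{Z}}$ is discrete) — is the standard one and is correct in outline. Two of your justifications, however, do not hold as written and need repair.

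First, in $(3)\Rightarrow(1)$ you claim $\mathcal{G}_{n_k}\to\mathcal{G}_\infty$ ``strongly in operator norm'' and invoke ``norm-convergent operators preserve weak limits.'' The modulation $T(\xi_{n_k})$ does \emph{not} converge to $T(\xi_\infty)$ in operator norm: $\sup_x|e^{ix\cdot\xi_{n_k}}-e^{ix\cdot\xi_\infty}|=2$ whenever $\xi_{n_k}\neq\xi_\infty$, so $\|T(\xi_{n_k})-T(\xi_\infty)\|_{\mathcal{L}(\mathcal{F}\dot{H}^{1/2})}\not\to0$. Only strong operator convergence holds, and strong convergence alone does not let you pass a moving operator through a weakly convergent sequence. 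The step is nonetheless salvageable: the $\mathcal{G}_n$ are unitary on the Hilbert space $\mathcal{F}\dot{H}^{1/2}\times\mathcal{F}\dot{H}^{1/2}$, so write $\langle\mathcal{G}_{n_{k_l}}^{-1}(f_{k_l},g_{k_l}),\eta\rangle=\langle(f_{k_l},g_{k_l}),\mathcal{G}_{n_{k_l}}\eta\rangle$, use that $\mathcal{G}_{n_{k_l}}\eta\to\mathcal{G}_\infty\eta$ strongly (dominated convergence for the modulation; the dilation parameter is eventually constant) and that weakly convergent sequences are bounded. Second, in $(1)\Rightarrow(2)$ your mechanism for killing the pairing is Fourier-support disjointness, but the inner product of $\mathcal{F}\dot{H}^{1/2}$ is $\int|x|f\bar{g}\,dx$, which is \emph{local in physical space}, not in frequency; two functions with disjoint Fourier supports generally have nonzero $\mathcal{F}\dot{H}^{1/2}$ inner product. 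The clean split is: if $|\log h_n|\to\infty$ along the subsequence, reduce by density to $\phi,\psi,\eta_1,\eta_2$ supported in annuli $\{a\le|x|\le b\}$ and use that $D(h_n)$ moves physical supports to $\{a/h_n\le|x|\le b/h_n\}$, so the integral is eventually exactly zero (the modulation does not move physical supports, so this case needs no hypothesis on $\xi_n$); if instead $h_n$ stays bounded (hence eventually constant along a further subsequence) and $|\xi_n|\to\infty$, the pairing is $\int e^{ix\cdot\xi_n}\bigl(|x|\,h_n^2\phi(h_nx)\overline{\eta_1(x)}\bigr)dx$ with the bracket in $L^1$ by Cauchy--Schwarz, and Riemann--Lebesgue finishes. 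With these two corrections the proof is complete.
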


For the proof, see \cite{Mas16,MasSeg18}.

We now introduce a notion of orthogonality.

\begin{definition}[Orthogonality]
We say two families of deformations
 $\{\mathcal{G}_n\}$, $\{\widetilde{\mathcal{G}}_n\}\subset G$ are orthogonal
if $\{\mathcal{G}_n^{-1}\widetilde{\mathcal{G}}_n\}$ is vanishing.
\end{definition}

\begin{remark}
Let $\{\mathcal{G}_n^j=T(\xi_n^j)D(h_n^j)\}$ $\subset G$ $(j=1,2)$ be two families of deformations.
$\{\mathcal{G}_n^1\}$ and $\{\mathcal{G}_n^2\}$ are orthogonal if and only if
\begin{align*}
\frac{h_n^1}{h_n^2}+\frac{h_n^2}{h_n^1}+\frac{|\xi_n^1-\xi_n^2|}{h_n^1}\longrightarrow\infty
\end{align*}
as $n\rightarrow\infty$.
This equivalence holds from the identity
\begin{align*}
(\mathcal{G}_n^1)^{-1}\mathcal{G}_n^2
	=T\left(\frac{\xi_n^2-\xi_n^1}{h_n^1}\right)D\left(\frac{h_n^2}{h_n^1}\right).
\end{align*}
\end{remark}

\begin{proposition}
Let $\{\mathcal{G}_n\}, \{\widetilde{\mathcal{G}}_n\}\subset G$.
Define the relation $\sim$ as follows:
If $\{\mathcal{G}_n\}$ and $\{\widetilde{\mathcal{G}}_n\}$ are not orthogonal then $\{\mathcal{G}_n\}\sim\{\widetilde{\mathcal{G}}_n\}$.
Then, $\sim$ is an equivalent relation.
\end{proposition}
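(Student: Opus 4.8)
The plan is to verify reflexivity, symmetry and transitivity of $\sim$; the first two are immediate and transitivity is the only point requiring an argument.

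For reflexivity, observe that for any $\{\mathcal{G}_n\}\subset G$ one has $\mathcal{G}_n^{-1}\mathcal{G}_n=\mathrm{Id}=T(0)D(1)$, a constant family, so $|\xi_n|+|\log h_n|\equiv0$ does not tend to $\infty$; hence $\{\mathcal{G}_n^{-1}\mathcal{G}_n\}$ is not vanishing, i.e.\ $\{\mathcal{G}_n\}$ is not orthogonal to itself, so $\{\mathcal{G}_n\}\sim\{\mathcal{G}_n\}$. For symmetry, if $\{\mathcal{G}_n\}\sim\{\widetilde{\mathcal{G}}_n\}$ then $\{\mathcal{G}_n^{-1}\widetilde{\mathcal{G}}_n\}$ is not vanishing; since $\widetilde{\mathcal{G}}_n^{-1}\mathcal{G}_n=(\mathcal{G}_n^{-1}\widetilde{\mathcal{G}}_n)^{-1}$, the contrapositive of Lemma \ref{Symmetry vanishing} shows $\{\widetilde{\mathcal{G}}_n^{-1}\mathcal{G}_n\}$ is not vanishing either, so $\{\widetilde{\mathcal{G}}_n\}\sim\{\mathcal{G}_n\}$.

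For transitivity, write $\mathcal{G}_n^j=T(\xi_n^j)D(h_n^j)$ for $j=1,2,3$ and set $\delta_n^{jk}:=\frac{h_n^j}{h_n^k}+\frac{h_n^k}{h_n^j}+\frac{|\xi_n^j-\xi_n^k|}{h_n^j}$. From $(\mathcal{G}_n^j)^{-1}\mathcal{G}_n^k=T(\frac{\xi_n^k-\xi_n^j}{h_n^j})D(\frac{h_n^k}{h_n^j})$ and the definition of a vanishing family, $\{\mathcal{G}_n^j\}$ and $\{\mathcal{G}_n^k\}$ are orthogonal exactly when $\delta_n^{jk}\to\infty$, hence $\sim$-related exactly when $\delta_n^{jk}$ stays bounded along some subsequence. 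Passing, as is the standing situation in the profile decomposition, to a common subsequence on which the ratios $h_n^1/h_n^2$, $h_n^2/h_n^3$, $|\xi_n^1-\xi_n^2|/h_n^1$, $|\xi_n^2-\xi_n^3|/h_n^2$ all converge in $[0,\infty]$, the hypotheses $\{\mathcal{G}_n^1\}\sim\{\mathcal{G}_n^2\}$ and $\{\mathcal{G}_n^2\}\sim\{\mathcal{G}_n^3\}$ become $\delta_n^{12}\le C$, $\delta_n^{23}\le C$ for some $C$ and all $n$. A triangle-type estimate then gives
\[
\frac{h_n^1}{h_n^3}+\frac{h_n^3}{h_n^1}=\frac{h_n^1}{h_n^2}\cdot\frac{h_n^2}{h_n^3}+\frac{h_n^3}{h_n^2}\cdot\frac{h_n^2}{h_n^1}\le 2C^2,\qquad
\frac{|\xi_n^1-\xi_n^3|}{h_n^1}\le\frac{|\xi_n^1-\xi_n^2|}{h_n^1}+\frac{|\xi_n^2-\xi_n^3|}{h_n^2}\cdot\frac{h_n^2}{h_n^1}\le C+C^2,
\]
so $\delta_n^{13}\le 3C^2+C$ is bounded; hence $\{\mathcal{G}_n^1\}$ and $\{\mathcal{G}_n^3\}$ are not orthogonal, i.e.\ $\{\mathcal{G}_n^1\}\sim\{\mathcal{G}_n^3\}$.

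The main obstacle is precisely this last step. Because orthogonality is an asymptotic dichotomy, ``not orthogonal'' on its own only provides boundedness of $\delta_n^{jk}$ along some subsequence, and the subsequences supplied by the two hypotheses of transitivity need not be compatible. The argument therefore relies on the fact that the deformation families entering the decomposition are built by successive subsequence extractions, so that one may assume all the relevant parameter ratios converge in $[0,\infty]$; once that is granted, the elementary bounds above close the argument, and reflexivity and symmetry need nothing further.
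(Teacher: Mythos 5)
Your verification of reflexivity and symmetry is correct, and the triangle-type estimates you use for transitivity are also correct as far as they go: if $\delta_n^{12}$ and $\delta_n^{23}$ are bounded by $C$ \emph{for the same values of $n$}, then indeed $\delta_n^{13}\le 3C^2+C$, so $\{\mathcal{G}_n^1\}$ and $\{\mathcal{G}_n^3\}$ are not orthogonal. The problem is exactly the one you flag at the end: the two hypotheses of transitivity only provide boundedness of $\delta_n^{12}$ and of $\delta_n^{23}$ along two \emph{separate} subsequences, and your way out --- ``pass to a common subsequence on which all the ratios converge, as is the standing situation in the profile decomposition'' --- is not available for the proposition as stated, which concerns arbitrary families in $G$ with no convergence assumption on the parameters. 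For arbitrary families the literal statement in fact fails: take $h_n^1=h_n^2=h_n^3=1$, $\xi_n^1=0$, $\xi_n^3=(n,0,0)$, and $\xi_n^2=0$ for even $n$, $\xi_n^2=(n,0,0)$ for odd $n$. Then $\delta_n^{12}=2+|\xi_n^2|$ is bounded along the even integers and $\delta_n^{23}=2+|\xi_n^2-\xi_n^3|$ along the odd ones, so neither pair is orthogonal, yet $\delta_n^{13}=2+n\to\infty$, so $\{\mathcal{G}_n^1\}$ and $\{\mathcal{G}_n^3\}$ \emph{are} orthogonal and transitivity fails.

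So the gap is genuine and sits exactly where you located it: ``not orthogonal'' is an existential statement about subsequences and is not stable under the extractions needed to combine the two hypotheses. To close the argument one must add (or extract from the context) the assumption that along the full sequence the quantities $h_n^j/h_n^k$ and $(\xi_n^j-\xi_n^k)/h_n^j$ converge in $[0,\infty]$ and in $\mathbb{R}^3\cup\{\infty\}$ respectively; then non-orthogonality of a pair forces the corresponding limits to be finite (and the dilation limit nonzero), so $\delta_n^{12}$ and $\delta_n^{23}$ are bounded for all $n$ simultaneously and your elementary estimates finish the proof. This is the setting in which the relation is actually used in Theorem \ref{Linear profile decomposition}, where the parameter families are produced by successive subsequence extraction so that all these limits exist, and it is the setting in which the proofs cited in \cite{Mas16,MasSeg18} operate. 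As a proof of the proposition in the generality in which it is stated, however, the transitivity step is not complete, and the appeal to ``the standing situation'' should be replaced by an explicit additional hypothesis on the families.
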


For the proof, see \cite{Mas16,MasSeg18}.

Let us now state the linear profile decomposition result.

\begin{theorem}[Linear profile decomposition]\label{Linear profile decomposition}
Let $\{(\phi_n,\psi_n)\}$ be a bounded sequence in $\mathcal{F}\dot{H}^\frac{1}{2}\times\mathcal{F}\dot{H}^\frac{1}{2}$.
Passing to a sequence if necessary, there exist profile $\{(\phi^j,\psi^j)\}\subset \mathcal{F}\dot{H}^\frac{1}{2}\times\mathcal{F}\dot{H}^\frac{1}{2}$, $\{(\Phi_n^J,\Psi_n^J)\}\subset \mathcal{F}\dot{H}^\frac{1}{2}\times\mathcal{F}\dot{H}^\frac{1}{2}$, and pairwise orthogonal families of deformations $\{\mathcal{G}_n^j=T(\xi_n^j)D(h_n^j)\}_n\subset G$ $(j=1,2,\cdots)$ such that for each $J\geq 1$,
\begin{align*}
(\phi_n,\psi_n)=\sum_{j=1}^J\mathcal{G}_n^j(\phi^j,\psi^j)+(\Phi_n^J,\Psi_n^J)
\end{align*}
for any $n\geq1$.
Moreover, $\{(\Phi_n^J,\Psi_n^J)\}$ satisfies
\begin{equation}
\notag (\mathcal{G}_n^j)^{-1}(\Phi_n^J,\Psi_n^J)\xrightharpoonup[]{\hspace{0.4cm}}
\begin{cases}
&\hspace{-0.4cm}(\phi^j,\psi^j)\hspace{0.55cm}(J<j),\\
&\hspace{-0.2cm}(0,0)\hspace{0.75cm}(J\geq j)
\end{cases}
\end{equation}
in $\mathcal{F}\dot{H}^\frac{1}{2}\!\times\!\mathcal{F}\dot{H}^\frac{1}{2}$ as $n\rightarrow\infty$ for any $j\geq0$, where we use the convention $(\Phi_n^0,\Psi_n^0)=(\phi_n,\psi_n)$, and
\begin{align}
\limsup_{n\rightarrow\infty}\|(e^{it\Delta}\Phi_n^J,e^{\frac{1}{2}it\Delta}\Psi_n^J)\|_{L_t^{q,\infty}L_x^r\times L_t^{q,\infty}L_x^r}\longrightarrow0 \label{022}
\end{align}
as $J\rightarrow \infty$ for any $1<q,r<\infty$ such that $\frac{1}{q}\in(\frac{1}{2},1)$ and $\frac{2}{q}+\frac{3}{r}=2$.
Furthermore, we have Pythagorean decomposition:
\begin{align*}
\|\phi_n\|_{\mathcal{F}\dot{H}^\frac{1}{2}}^2=\sum_{j=1}^J\|\phi^j\|_{\mathcal{F}\dot{H}^\frac{1}{2}}^2+\|\Phi_n^J\|_{\mathcal{F}\dot{H}^\frac{1}{2}}^2+o_n(1),
\end{align*}
\abovedisplayskip=0cm
\begin{align*}
\|\psi_n\|_{\mathcal{F}\dot{H}^\frac{1}{2}}^2=\sum_{j=1}^J\|\psi^j\|_{\mathcal{F}\dot{H}^\frac{1}{2}}^2+\|\Psi_n^J\|_{\mathcal{F}\dot{H}^\frac{1}{2}}^2+o_n(1),
\end{align*}
where $o_n(1)$ goes to $0$ as $n\rightarrow\infty$.
\end{theorem}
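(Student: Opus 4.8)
The plan is to carry out the standard inductive bubble-extraction (defect of compactness) argument on the product Hilbert space $\mathcal{F}\dot{H}^\frac12\times\mathcal{F}\dot{H}^\frac12$, following the scheme used for the single mass-subcritical equation in \cite{Mas16,MasSeg18}. The one structural feature to keep in mind is that the available noncompact symmetries are exactly the dilations $D(h)$ and the Galilean boosts $T(\xi)$ --- physical-space translations are \emph{not} isometries of $\mathcal{F}\dot{H}^\frac12$ and play no role, which is precisely why the relevant objects are the deformations in $G$ introduced above; moreover $D(h)$ and $T(\xi)$ are unitary on each factor $\mathcal{F}\dot{H}^\frac12$, a fact used throughout. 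We write $A:=\limsup_n\|(\phi_n,\psi_n)\|_{\mathcal{F}\dot{H}^\frac12\times\mathcal{F}\dot{H}^\frac12}<\infty$ and fix once and for all one pair $(q_0,r_0)$ with $\tfrac1{q_0}\in(\tfrac12,1)$ and $\tfrac2{q_0}+\tfrac3{r_0}=2$ (for instance the $S^{\text{weak}}$-pair $(\tfrac32,\tfrac92)$); the whole extraction is driven by this single pair, and \eqref{022} for the other pairs is recovered at the very end by interpolation.

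The analytic core, and the place where the genuine work lies, is an inverse Strichartz (bubble-extraction) lemma: there is $\beta\ge1$ and, for each $M>0$, a constant $C=C(M)>0$ such that whenever $\{(f_n,g_n)\}\subset\mathcal{F}\dot{H}^\frac12\times\mathcal{F}\dot{H}^\frac12$ satisfies $\limsup_n\|(f_n,g_n)\|_{\mathcal{F}\dot{H}^\frac12\times\mathcal{F}\dot{H}^\frac12}\le M$ and $\limsup_n\|(e^{it\Delta}f_n,e^{\frac12it\Delta}g_n)\|_{L_t^{q_0,\infty}L_x^{r_0}\times L_t^{q_0,\infty}L_x^{r_0}}\ge\varepsilon>0$, there exist a family $\{\mathcal{G}_n=T(\xi_n)D(h_n)\}\subset G$ and $(\phi,\psi)\in\mathcal{F}\dot{H}^\frac12\times\mathcal{F}\dot{H}^\frac12$ such that, along a subsequence, $\mathcal{G}_n^{-1}(f_n,g_n)\rightharpoonup(\phi,\psi)$ and $\|(\phi,\psi)\|_{\mathcal{F}\dot{H}^\frac12\times\mathcal{F}\dot{H}^\frac12}\ge C^{-1}\varepsilon^{\beta}$. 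To prove it I would use the factorization $e^{it\Delta}=\mathcal{M}_\frac12(t)\mathcal{D}(t)\mathcal{F}\mathcal{M}_\frac12(t)$ (and its analogue for $e^{\frac12it\Delta}$) to rewrite the $L_x^{r_0}$ norm at time $t$ as an $L_x^{r_0}$ norm of a dilate of $\mathcal{F}\mathcal{M}_\frac12(t)f_n$, thereby transferring the nontriviality hypothesis to the Fourier side, where the data live in $\dot{H}^\frac12(\mathbb{R}^3)$ and the noncompact symmetries are ordinary scaling and translation; a Littlewood--Paley decomposition together with Bernstein's inequality then localizes the bound to a single frequency scale and a single frequency center, and the corresponding dilation and modulation, carried back, produce $\mathcal{G}_n$. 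One applies the lemma componentwise, and since a single boost $\xi_n$ acts with weight $1$ on the $u$-slot and $2$ on the $v$-slot, consistently with \eqref{008}, the two componentwise extractions merge into one deformation. I expect this lemma to be the main obstacle; everything below is bookkeeping with the abstract machinery.

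Granting the lemma, the iteration is routine. Put $(\Phi_n^0,\Psi_n^0):=(\phi_n,\psi_n)$; at stage $J$, if $\limsup_n\|(e^{it\Delta}\Phi_n^J,e^{\frac12it\Delta}\Psi_n^J)\|_{L_t^{q_0,\infty}L_x^{r_0}\times L_t^{q_0,\infty}L_x^{r_0}}=0$, set $(\phi^j,\psi^j)=(0,0)$ for all $j>J$ and stop; otherwise apply the lemma to obtain $\mathcal{G}_n^{J+1}$ and $(\phi^{J+1},\psi^{J+1})\neq(0,0)$ and define $(\Phi_n^{J+1},\Psi_n^{J+1}):=(\Phi_n^J,\Psi_n^J)-\mathcal{G}_n^{J+1}(\phi^{J+1},\psi^{J+1})$. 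A diagonal extraction yields one subsequence valid for every $J$. The prescribed weak limits for $j\le J$ hold by construction --- for instance $(\mathcal{G}_n^j)^{-1}(\Phi_n^j,\Psi_n^j)=(\mathcal{G}_n^j)^{-1}(\Phi_n^{j-1},\Psi_n^{j-1})-(\phi^j,\psi^j)\rightharpoonup(0,0)$ --- and for $j>J$ they follow, once orthogonality is available, from the telescoping identity $(\Phi_n^{j-1},\Psi_n^{j-1})=(\Phi_n^J,\Psi_n^J)-\sum_{i=J+1}^{j-1}\mathcal{G}_n^i(\phi^i,\psi^i)$ and Proposition \ref{Equivalent vanishing 2}. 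Since each $\mathcal{G}_n^J$ is unitary on each factor, expanding $\|\mathcal{G}_n^{J}\phi^{J}+\Phi_n^{J}\|_{\mathcal{F}\dot{H}^\frac12}^2=\|\phi^{J}\|_{\mathcal{F}\dot{H}^\frac12}^2+\|\Phi_n^{J}\|_{\mathcal{F}\dot{H}^\frac12}^2+2\,\Re\,\<\phi^{J},(\mathcal{G}_n^{J})^{-1}\Phi_n^{J}\>$ and using $(\mathcal{G}_n^{J})^{-1}\Phi_n^{J}\rightharpoonup0$ gives, by iterating this one-step relation from $\Phi_n^0=\phi_n$, the Pythagorean identities.

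It remains to establish orthogonality and the smallness \eqref{022}. Orthogonality of $\{\mathcal{G}_n^j\}$ and $\{\mathcal{G}_n^k\}$ for $j<k$ is proven by induction on $k$: if $\{(\mathcal{G}_n^j)^{-1}\mathcal{G}_n^k\}$ were not vanishing then, writing $(\mathcal{G}_n^j)^{-1}\mathcal{G}_n^k=T(\eta_n)D(\mu_n)$, we may pass to a subsequence along which $\eta_n\to\eta_\infty$ in $\mathbb{R}^3$ and $\mu_n\to\mu_\infty$ in $2^{\mathbb{Z}}$, so that $(\mathcal{G}_n^j)^{-1}\mathcal{G}_n^k$ converges strongly to the deformation $\mathcal{G}_\infty:=T(\eta_\infty)D(\mu_\infty)$; then, on the one hand, since $(\mathcal{G}_n^k)^{-1}(\Phi_n^{k-1},\Psi_n^{k-1})\rightharpoonup(\phi^k,\psi^k)$, we get $(\mathcal{G}_n^j)^{-1}(\Phi_n^{k-1},\Psi_n^{k-1})=[(\mathcal{G}_n^j)^{-1}\mathcal{G}_n^k]\,(\mathcal{G}_n^k)^{-1}(\Phi_n^{k-1},\Psi_n^{k-1})\rightharpoonup\mathcal{G}_\infty(\phi^k,\psi^k)\neq(0,0)$, whereas, on the other hand, the telescoping identity together with $(\mathcal{G}_n^j)^{-1}(\Phi_n^j,\Psi_n^j)\rightharpoonup(0,0)$ and the already-established vanishing of $\{(\mathcal{G}_n^j)^{-1}\mathcal{G}_n^i\}$ for $j<i\le k-1$ (via Proposition \ref{Equivalent vanishing 2}) forces the same quantity to tend weakly to $(0,0)$ --- a contradiction. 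Finally, the Pythagorean identities give $\sum_{j\ge1}\|(\phi^j,\psi^j)\|_{\mathcal{F}\dot{H}^\frac12\times\mathcal{F}\dot{H}^\frac12}^2\le A^2$, hence $\|(\phi^j,\psi^j)\|\to0$ as $j\to\infty$; combined with the lower bound $\|(\phi^{J+1},\psi^{J+1})\|\ge C^{-1}\varepsilon_J^{\beta}$ from the lemma, where $\varepsilon_J:=\limsup_n\|(e^{it\Delta}\Phi_n^J,e^{\frac12it\Delta}\Psi_n^J)\|_{L_t^{q_0,\infty}L_x^{r_0}\times L_t^{q_0,\infty}L_x^{r_0}}$, this forces $\varepsilon_J\to0$, which is \eqref{022} for $(q_0,r_0)$. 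For a general pair $(q,r)$ with $\tfrac1q\in(\tfrac12,1)$ and $\tfrac2q+\tfrac3r=2$, one interpolates (H\"older in Lorentz spaces, Lemma \ref{Holder in Lorentz spaces}) between the $L_t^{q_0,\infty}L_x^{r_0}$-norm, which tends to $0$, and a second such norm $L_t^{q_1,\infty}L_x^{r_1}$ that is bounded uniformly in $n$ and $J$ --- the latter bound following from Proposition \ref{Strichartz estimates} together with Sobolev embedding in $x$ (as in the proof of Lemma \ref{Embeddings}) and the Pythagorean bound $\|\Phi_n^J\|_{\mathcal{F}\dot{H}^\frac12}\le A+o_n(1)$ --- whence \eqref{022} holds for every such pair, completing the plan.
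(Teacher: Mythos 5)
Your scaffolding --- the inverse-Strichartz-driven induction, the telescoping weak limits, the induction proving pairwise orthogonality, the Pythagorean expansion via unitarity of the deformations, and the Lorentz--H\"older interpolation that upgrades \eqref{022} from one pair $(q_0,r_0)$ to all pairs on the scaling line --- is correct and is essentially the paper's argument in different packaging: the paper drives the extraction by the concentration functional $\eta$ and then invokes Proposition \ref{P:cv} to convert $\eta(\{(\Phi_n^J,\Psi_n^J)\})\to0$ into \eqref{022}, while your ``inverse Strichartz lemma'' is exactly Proposition \ref{P:cv} restated as the engine of the induction. The difficulty is that this lemma is the entire analytic content of the theorem (the rest being, as you say, bookkeeping), and the proof you sketch for it would not close.

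Concretely: after transferring to the Fourier side via $e^{it\Delta}=\mathcal{M}_{\frac12}(t)\mathcal{D}(t)\mathcal{F}\mathcal{M}_{\frac12}(t)$, Littlewood--Paley plus Bernstein can at best select a frequency \emph{scale} $N_n$, and even that requires a refined rather than the plain Strichartz estimate (this is Lemma \ref{L:iStr}, whose proof is a genuine bilinear/square-function argument, not an application of Bernstein). Bernstein provides no frequency or spatial \emph{center}, hence no candidate for the boost $\xi_n$, and, more seriously, no mechanism for showing that the weak limit of $T(\xi_n)D(h_n)(f_n,g_n)$ is nonzero: a weak limit can vanish even when every relevant norm is bounded below. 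In the paper the modulation parameter comes from a pointwise concentration statement: one excises times $t\lesssim N_n^2$, upgrades the $L^3_{t,x}$ lower bound to $N_n^{-4/3}\,|t_n^{3/2}e^{it_n\Delta}(|x|^{1/3}\psi_{N_n}\Phi_n)(y_n)|\gtrsim1$ at some $t_n\ge\tau_0N_n^2$, $y_n\in\mathbb{R}^3$, and reads off $h_n=N_n$, $\xi_n=-N_ny_n/(2t_n)$; the bound $0<N_n^2/(4t_n)\le1/(4\tau_0)$ is then precisely what makes the residual quadratic phase $e^{iN_n^2|x|^2/(4t_n)}$ converge along a subsequence, so that the rescaled, demodulated sequence pairs against a \emph{fixed} test function $|x|^{-1/6}\psi(x)$ with a nonvanishing limit, giving $\norm{\widetilde{\Phi}}_{\mathcal{F}\dot{H}^{1/2}}\gtrsim_{M,\varepsilon_0}1$. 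None of these steps (the refined Strichartz estimate, the excision of small times, the $L^\infty_{t,x}$ upgrade, the control of the leftover quadratic phase, the pairing argument) appears in your plan, and they are where the work is; you would need either to supply them or to import the single-equation version from \cite{Mas16} and justify why it survives the passage to the system, the only genuinely new point there being that one boost acts with weight $1$ on the first component and $2$ on the second, which you do correctly note.
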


\begin{proof}
We define
\begin{equation*}
\nu(\{(\phi_n,\psi_n)\}):=\left\{(\phi,\psi)\in\mathcal{F}\dot{H}^\frac{1}{2}\times\mathcal{F}\dot{H}^\frac{1}{2}\left|
\begin{array}{l}
\text{There exist }\xi_n\in\mathbb{R}^3\text{ and }h_n\in2^\mathbb{Z}\text{ such that}\\
(\mathcal{G}_n^j)^{-1}(\phi_n,\psi_n)\xrightharpoonup[]{\hspace{0.4cm}}(\phi,\psi)\text{ in }\mathcal{F}\dot{H}^\frac{1}{2}\times\mathcal{F}\dot{H}^\frac{1}{2}\\
\text{ as }n\rightarrow\infty,\text{ up to subsequence.}
\end{array}
\right.\right\}.
\end{equation*}
and
\begin{align*}
\eta(\{(\phi_n,\psi_n)\}):=\sup_{(\phi,\psi)\in\nu(\{(\phi_n,\psi_n)\})}\|(\phi,\psi)\|_{\mathcal{F}\dot{H}^\frac{1}{2}\times\mathcal{F}\dot{H}^\frac{1}{2}}.
\end{align*}
Then, a standard argument shows the theorem.
However, the smallness \eqref{022} is replaced by
$\eta(\{(\Phi_n^J,\Psi_n^J)\}) \to 0$
as $J\to\infty$.
The following Proposition \ref{P:cv} shows that this smallness is stronger.
\end{proof}

\subsection{Control of vanishing}

To complete the proof of Theorem \ref{Linear profile decomposition}, we show the following in this subsection.

\begin{proposition}[Control of vanishing]\label{P:cv}
If a sequence $\{(\Phi_n,\Psi_n)\}_n \subset \mathcal{F}\dot{H}^{\frac12} \times \mathcal{F}\dot{H}^{\frac12}$ satisfies
\begin{align*}
	\norm{(\Phi_n,\Psi_n)}_{\mathcal{F}\dot{H}^{\frac12} \times \mathcal{F}\dot{H}^{\frac12}} \le M
\end{align*}
and
\begin{align*}
	\|(e^{it\Delta}\Phi_n,e^{\frac{1}{2}it\Delta}\Psi_n)\|_{L_t^{q,\infty}L_x^r\times L_t^{q,\infty}L_x^r}\ge \varepsilon_0
\end{align*}
for some $M>0$, $\varepsilon_0>0$, and
$1<q,r<\infty$ with $\frac{1}{q}\in(\frac{1}{2},1)$ and $\frac{2}{q}+\frac{3}{r}=2$, then
\begin{align*}
	\eta(\{(\Phi_n,\Psi_n)\}) \gtrsim_{M,\varepsilon_0,q,r} 1.
\end{align*}
\end{proposition}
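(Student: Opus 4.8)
The plan is to extract, from the two hypotheses, a profile that survives in the weak limit after pulling back by a suitable deformation, and thereby produce an element of $\nu(\{(\Phi_n,\Psi_n)\})$ of controlled size. First I would reduce the two-component statement to a single-component one. By the triangle inequality, at least one of $\|e^{it\Delta}\Phi_n\|_{L_t^{q,\infty}L_x^r}$ or $\|e^{\frac12 it\Delta}\Psi_n\|_{L_t^{q,\infty}L_x^r}$ is $\geq \varepsilon_0/2$ for infinitely many $n$; passing to a subsequence, assume it is the first (the $\Psi$ case is identical after rescaling the frequency variable by the factor relating the two Schr\"odinger groups, which is exactly what the two distinct dilation parameters $h_n$ in the definition of $T(\xi)D(h)$ on the two components are designed to absorb). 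So we are reduced to: a bounded sequence $\{\Phi_n\}$ in $\mathcal F\dot H^{1/2}$ with $\|e^{it\Delta}\Phi_n\|_{L_t^{q,\infty}L_x^r}\geq \varepsilon_0/2$ forces the existence of $\xi_n\in\mathbb R^3$, $h_n\in 2^{\mathbb Z}$ and $t_n\in\mathbb R$ such that, up to a subsequence, an appropriate pull-back of $\Phi_n$ converges weakly to a nonzero limit $\phi$ with $\|\phi\|_{\mathcal F\dot H^{1/2}}\gtrsim_{M,\varepsilon_0,q,r}1$.

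The main engine is an inverse Strichartz / refined Sobolev argument. Since $(q,r)$ lies on the scaling line $\frac2q+\frac3r=2$ with $\frac1q\in(\frac12,1)$, the estimate $\|e^{it\Delta}f\|_{L_t^{q,\infty}L_x^r}\lesssim\|f\|_{\mathcal F\dot H^{1/2}}$ holds (this is the Strichartz bound behind Proposition \ref{Strichartz estimates} / Lemma \ref{Embeddings}, written in the Galilean-invariant form on the physical side; note $\mathcal F\dot H^{1/2}\simeq \dot H^{-1/2}$ after Fourier transform, and $(q,r)$ is the endpoint-type pair dual to $L^2$ in $\dot H^{-1/2}$). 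I would interpolate this $L_t^{q,\infty}L_x^r$ bound against a higher-integrability Strichartz norm — concretely against an $L_t^{\tilde q}L_x^{\tilde r}$-type norm with a larger spatial exponent, obtainable by a Bernstein inequality after a Littlewood-Paley decomposition (Lemma \ref{Square function estimate}) — to conclude that there is a single frequency block $P_N$ and, by a further $TT^*$ / stationary-phase localization in physical space, a single ``tube'' (a space-time point $(t_n,x_n)$ and frequency center $N_n\xi_n$, width $\sim N_n$) on which $e^{it\Delta}P_{N_n}\Phi_n$ is not small in, say, $L_x^\infty$ at time $t_n$ near $x_n$. Rescaling by $h_n\sim N_n^{-1}$, translating in frequency by $\xi_n$, translating in space by $x_n$ and evolving by $-t_n$ turns this into an $L^2$-bounded sequence whose value near the origin at time $0$ stays bounded below; weak compactness then yields a nonzero weak limit $\phi$, and the quantitative lower bound on $\|\phi\|$ comes by tracking the constants through interpolation, Bernstein, and the lower bound $\varepsilon_0$. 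Finally I would check that the sequence $(\xi_n,h_n)$ and the time shift $t_n$ fit the form allowed in the definition of $\nu$: the definition permits a frequency translation and a dilation, and the extra time translation $e^{-it_n\Delta}$ applied to $\Phi_n$ can be absorbed because a weak limit of $e^{-it_n\Delta}(\text{pull-back of }\Phi_n)$ being nonzero still forces $\eta(\{(\Phi_n,\Psi_n)\})>0$ — one uses that $e^{it_n\Delta}$ is a bounded family on each dilated/frequency-shifted profile class, or else that $t_n h_n^{-2}$ converges (after a further subsequence) to some finite or infinite value and handles the finite case directly and the infinite case by a dispersive decay argument. This last bookkeeping with $t_n$ is, I expect, where references \cite{Mas16,MasSeg18} are invoked, since the analogous decomposition for the single equation is carried out there.

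The hard part will be the inverse Strichartz step producing the tube: extracting a \emph{single} concentrating bubble from an $L_t^{q,\infty}L_x^r$ lower bound (a weak-type, hence non-$\sigma$-additive, norm) rather than from a strong $L_t^qL_x^r$ bound. The weak-$L^q$ in time means one cannot directly pick a ``fat'' time slab; instead one argues that the distribution function of $\|e^{it\Delta}\Phi_n\|_{L_x^r}$ has a plateau of definite height and width on the $\lambda$-axis, selects a level $\lambda\sim\varepsilon_0$, and works on the set $\{t:\|e^{it\Delta}\Phi_n(t)\|_{L_x^r}>\lambda\}$, whose measure is bounded below; on that set one runs Bernstein + bilinear/stationary phase as above. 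Keeping every constant explicit in $M$, $\varepsilon_0$, $q$, $r$ through this chain is tedious but routine. Once the bubble is found, producing the element of $\nu$ and hence the lower bound on $\eta$ is immediate from the definitions of $\nu$ and $\eta$, together with Proposition \ref{Equivalent vanishing 2} to rule out that the extracted deformation family is vanishing (if it were, the pull-back would converge weakly to $0$, contradicting $\phi\neq 0$).
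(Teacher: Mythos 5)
Your overall strategy (pigeonhole to one component, then an inverse-Strichartz extraction of a concentrating bubble whose pull-back has a nonzero weak limit of quantified size) is the right one, but there is a structural gap at the heart of the extraction step. The deformation group $G$ in this paper consists \emph{only} of frequency translations $T(\xi)$ and dilations $D(h)$: there are no spatial translations and no time translations. Your proposed bubble is parametrized by a scale, a frequency center, a spatial center $x_n$ \emph{and} a time $t_n$, so you must absorb $x_n$ and $t_n$ into a group that does not contain them. You discuss absorbing $e^{-it_n\Delta}$ only vaguely (a dichotomy on $t_nh_n^{-2}$, with the infinite case handled by ``dispersive decay'', which would give a \emph{zero} weak limit rather than rescue the argument), and you never address the spatial translation at all; in the weighted space $\mathcal{F}\dot{H}^{1/2}$, whose norm is not translation invariant, this cannot simply be dropped. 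Relatedly, your Littlewood--Paley refinement is taken on the frequency side with Bernstein, but the uniform bound available is on $\||x|^{1/2}\Phi_n\|_{L^2}$, not on any $\dot{H}^s$ norm, so a frequency-side refinement does not connect to the hypothesis.

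The paper resolves both difficulties by working on the ``Fourier side'' throughout. The weak Lorentz norm is first dominated, by a single interpolation, by an $L_t^3 \dot{X}^{1/3,3}_{1/2}$ norm (no distribution-function analysis is needed). The refined Strichartz estimate (Lemma \ref{L:iStr}) then decomposes the \emph{data} into physical-space annuli $\{|x|\sim N\}$ via the multiplication operators $\psi_N$, conjugated through the factorization $e^{it\Delta}=\mathcal{M}_{1/2}\mathcal{D}\mathcal{F}\mathcal{M}_{1/2}$, so each single-scale piece is controlled by $\|\psi_N|x|^{1/6}f\|_{L^2}$, which does talk to $\mathcal{F}\dot{H}^{1/2}$. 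After locating a scale $N_n$, a time $t_n$ and a point $y_n$ of concentration, the explicit Fresnel kernel converts the spatial center into a \emph{frequency} shift $\xi_n=-N_ny_n/(2t_n)$, and the time enters only through the quadratic phase $e^{iN_n^2|x|^2/(4t_n)}$; a preliminary short-time estimate forces $t_n\ge\tau_0N_n^2$, so this phase is bounded, converges along a subsequence, and is absorbed into the test function in the weak pairing. Thus no time or space translation ever needs to enter the symmetry group, which is exactly the point your sketch leaves open. A minor further point: your appeal to Proposition \ref{Equivalent vanishing 2} at the end is unnecessary, since membership in $\nu$ only requires a nonzero weak limit along \emph{some} family of deformations, vanishing or not; the lower bound on $\eta$ comes directly from the lower bound on the norm of the weak limit.
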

To prove the proposition, we will need the following.
\begin{lemma}[Improved Strichartz estimate]\label{L:iStr}
It holds that
\begin{align*}
	\norm{e^{it\Delta}f}_{L^3_t([0,\infty);L_x^3)} \lesssim \norm{f}_{\mathcal{F} \dot{H}^{\frac16}}^{\frac23} \sup_{N\in 2^\mathbb{Z}} \left( \norm{e^{it \Delta} \psi_N f}_{L^3_t([0,\infty);L_x^3)} \right)^{\frac13},
\end{align*}
where $\psi_N$ is defined as \eqref{069}.
\end{lemma}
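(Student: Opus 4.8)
The plan is to prove the bilinear-type refinement of the $L^3_tL^3_x$ Strichartz estimate by a Littlewood--Paley decomposition, isolating the square-function structure at the spatial level and then summing the dyadic pieces with the gain coming from the interpolation between an $L^3$-bound with a small power and an $L^2$-based bound with the remaining power.

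First I would write $e^{it\Delta}f = \sum_{N\in 2^{\mathbb{Z}}} e^{it\Delta}\psi_N f$ and estimate $\|e^{it\Delta}f\|_{L^3_tL^3_x}^3$. Using that the $L^3_{t,x}$ norm of a sum is controlled, via the square function estimate (Lemma \ref{Square function estimate} applied in the $t,x$ variables together, or rather its consequence on $\mathbb{R}\times\mathbb{R}^3$) by $\bigl\|\bigl(\sum_N |e^{it\Delta}\psi_N f|^2\bigr)^{1/2}\bigr\|_{L^3_{t,x}}$, it suffices to bound
\[
	\Bigl\| \Bigl(\sum_N |e^{it\Delta}\psi_N f|^2\Bigr)^{1/2}\Bigr\|_{L^3_{t,x}}^3
	= \Bigl\| \sum_N |e^{it\Delta}\psi_N f|^2 \Bigr\|_{L^{3/2}_{t,x}}^{3/2}.
\]
Then I would expand $\sum_N |e^{it\Delta}\psi_N f|^2 \le \sum_N |e^{it\Delta}\psi_N f| \cdot \sup_M |e^{it\Delta}\psi_M f|$ is too lossy; instead I would use $\||e^{it\Delta}\psi_N f|^2\|_{L^{3/2}_{t,x}} = \|e^{it\Delta}\psi_N f\|_{L^3_{t,x}}^2$ and interpolate each dyadic block: write $\|e^{it\Delta}\psi_N f\|_{L^3_{t,x}}^2 = \|e^{it\Delta}\psi_N f\|_{L^3_{t,x}}^{4/3}\cdot\|e^{it\Delta}\psi_N f\|_{L^3_{t,x}}^{2/3}$, bound the last factor by the supremum over $N$, and bound $\sum_N\|e^{it\Delta}\psi_N f\|_{L^3_{t,x}}^{4/3}$ by a Besov-type norm. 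Here the admissible Strichartz pair attached to $L^3_x$ in three dimensions is $(q,r)=(\tfrac{18}{5},3)$ with scaling exponent $\tfrac16$, i.e. $\|e^{it\Delta}g\|_{L^{18/5}_tL^3_x}\lesssim \|g\|_{\mathcal{F}\dot H^{1/6}}$; combined with the local-in-time/interpolation trick to pass from $L^{18/5}_t$ to $L^3_t$ on $[0,\infty)$ one reaches the stated right-hand side.

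More precisely, the structure I would follow is: (i) reduce to the square function on $\mathbb{R}\times\mathbb{R}^3$; (ii) use almost-orthogonality of the spatial frequency pieces together with the elementary inequality $\sum_N a_N^2 \le \bigl(\sup_N a_N\bigr)^{2/3}\bigl(\sum_N a_N^{4/3}\bigr)$ applied pointwise after pulling out the supremum is not quite pointwise-legal, so instead I would apply H\"older in the counting measure in $N$ at the level of the $L^{3/2}_{t,x}$ norm: $\|\sum_N |e^{it\Delta}\psi_N f|^2\|_{L^{3/2}_{t,x}} \le \bigl(\sup_N \|e^{it\Delta}\psi_N f\|_{L^3_{t,x}}\bigr)^{2/3}\,\sum_N \|e^{it\Delta}\psi_N f\|_{L^3_{t,x}}^{4/3}$ after inserting the trivial bound $|e^{it\Delta}\psi_N f|^2 = |e^{it\Delta}\psi_N f|^{2/3}\cdot |e^{it\Delta}\psi_N f|^{4/3}$ and H\"older with exponents $(3,3/2)$ in $N$... this needs care; (iii) estimate $\sum_N \|e^{it\Delta}\psi_N f\|_{L^3_{t,x}}^{4/3}$: by Strichartz for the pair $(18/5,3)$, $\|e^{it\Delta}\psi_N f\|_{L^{18/5}_tL^3_x}\lesssim N^{-1/6}\|\psi_N f\|_{L^2}$, and after a Bernstein/interpolation step to replace $L^{18/5}_t$ by $L^3_t$ locally and then sum globally, conclude $\sum_N \|e^{it\Delta}\psi_N f\|_{L^3_{t,x}}^{4/3}\lesssim \sum_N N^{-2/9}\|\psi_N f\|_{L^2}^{4/3}$, and recognize the right-hand side, via H\"older in $N$ with the weight, as $\lesssim \|f\|_{\mathcal{F}\dot H^{1/6}}^{4/3}$; (iv) combine (ii) and (iii) and take the $3/2$ power to get the claim.

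The main obstacle, as usual with these refined Strichartz estimates, is step (ii)--(iii): justifying the passage from the time-global $L^3_t$ norm to the Strichartz-admissible $L^{18/5}_t$ norm (they differ, so one cannot just apply Strichartz directly to the $L^3_t L^3_x$ block) and making the H\"older-in-$N$ bookkeeping produce exactly the exponents $\tfrac23$ and $\tfrac13$ with the Sobolev index $\tfrac16$. I expect this to be handled by the same device already used in \cite{Mas15,KilMasMurVis17}: interpolate the single-block estimate between $\|e^{it\Delta}\psi_N f\|_{L^{18/5}_tL^3_x}\lesssim N^{-1/6}\|\psi_N f\|_{L^2}$ and the crude bound $\|e^{it\Delta}\psi_N f\|_{L^\infty_tL^3_x}$ (or rather use that the supremum factor already supplies the ``endpoint''), so that the effective time exponent matches $3$; the remaining summation in $N$ is then a straightforward H\"older with the $\ell^2(2^{\mathbb{Z}})$ structure coming from $\|f\|_{\mathcal{F}\dot H^{1/6}}^2 \sim \sum_N N^{1/3}\|\psi_N f\|_{L^2}^2$. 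I would present this summation carefully since the balance of exponents $\tfrac23+\tfrac13=1$ and $\tfrac16\cdot\tfrac23 + (\text{weight from }\sum) = \tfrac16$ is exactly what pins down the statement, and a slip there is the only real risk.
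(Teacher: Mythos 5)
Your proposal has two genuine gaps, and both are resolved in the paper by ideas that are absent from your outline. First, the reduction to a square function in your step (i) is not justified: in this lemma $\psi_N f$ denotes multiplication by the \emph{physical-space} cutoff $\psi_N(x)$ (the relevant regularity $\mathcal{F}\dot{H}^{1/6}$ is a weighted $L^2$ space, and the right-hand side of the lemma is stated with these spatial cutoffs), so the pieces $e^{it\Delta}\psi_N f$ are not frequency-localized and no square-function estimate on $\mathbb{R}\times\mathbb{R}^3$ applies to their sum; your later appeal to Bernstein and to the ``admissible pair $(18/5,3)$'' (which is not admissible: $r=3$ forces $q=4$) suggests you are implicitly treating $\psi_N$ as a frequency projection, which changes the statement. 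The paper's key observation is the conjugation identity $\mathcal{M}_{1/2}(-t)e^{it\Delta}\psi_N f = P_{N/2t}\,\mathcal{M}_{1/2}(-t)e^{it\Delta}f$, coming from the factorization $e^{it\Delta}=\mathcal{M}\mathcal{D}\mathcal{F}\mathcal{M}$: after removing the quadratic phase, spatial localization of the data becomes frequency localization of the solution at the time-dependent scale $N/2t$, and Lemma \ref{Square function estimate} is then applied for each fixed $t$ with a constant independent of $t$ by scaling.

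Second, the H\"older-in-$N$ bookkeeping in your steps (ii)--(iii) cannot be closed. Pointwise one has $\sum_N|a_N|^2\le(\sup_N|a_N|)^{2/3}\sum_N|a_N|^{4/3}$, but the supremum is then still inside the $L^{3/2}_{t,x}$ integral, and $\|\sup_N|a_N|\|_{L^3_{t,x}}$ is not controlled by $\sup_N\|a_N\|_{L^3_{t,x}}$. Even granting that step, the resulting quantity $\sum_N\|e^{it\Delta}\psi_Nf\|_{L^3_{t,x}}^{4/3}$ would require an $\ell^{4/3}$ bound on the dyadic blocks, which does not follow from the $\ell^2$-based norm $\|f\|_{\mathcal{F}\dot{H}^{1/6}}^2\sim\sum_N\||x|^{1/6}\psi_Nf\|_{L^2}^2$: passing from $\ell^2$ to $\ell^{4/3}$ by H\"older needs a summable weight over all of $2^{\mathbb{Z}}$, and a pure power of $N$ is never summable there. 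The paper avoids both problems with a bilinear expansion: $\|(\sum_N|v_N|^2)^{1/2}\|_{L^3}^3\lesssim\sum_{N\le M}\int|v_N|^{3/2}|v_M|^{3/2}$, then H\"older with exponents $r_1<3<r_2$, $1/r_1+1/r_2=2/3$, extracting $\sup_K\|v_K\|_{L^3_{t,x}}$ from the two $L^3$ half-powers (legitimately, since the norms are already taken), while the single-block bound $\|v_N\|_{L^r_{t,x}}\lesssim N^{5/r-5/3}\|\psi_N|x|^{1/6}f\|_{L^2}$ produces off-diagonal decay $(M/N)^{5/3-5/r_1}$; Cauchy--Schwarz in $N$ then yields $\|f\|_{\mathcal{F}\dot{H}^{1/6}}^2$. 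This off-diagonal gain is the essential mechanism that your supremum-plus-H\"older scheme does not supply.
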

\begin{proof}
%
By Lemma \ref{Square function estimate}, we have
\begin{align*}
	\norm{e^{it\Delta}f}_{L^{3}_x}
	= \norm{\mathcal{M}_{\frac{1}{2}}(-t)e^{it\Delta}f}_{L^{3}_x}
	\sim \Bigl\|\Bigl(\sum_{N\in 2^\mathbb{Z}}\left|P_\frac{N}{2t} \mathcal{M}_\frac{1}{2}(-t)e^{it\Delta}f\right|^2\Bigr)^{1/2}\Bigr\|_{L^{3}_x}
\end{align*}
for $t>0$, where the implicit constant is independent of $t$ by virtue of the scaling.
Denote $v_N = P_\frac{N}{2t} \mathcal{M}_\frac{1}{2}(-t)e^{it\Delta}f$ for simplicity. 
By a convexity argument, one has
\begin{align*}
	\Bigl\|\Bigl(\sum_{N\in 2^\mathbb{Z}}|v_N|^2\Bigr)^\frac{1}{2}\Bigr\|_{L^3_t([0,\infty);L_x^3)}^3
	&{}= \int_{[0,\infty) \times \mathbb{R}^3} \Bigl(\sum_{N\in 2^\mathbb{Z}}|v_N|^2\Bigr)^{\frac{3}{4}}
	\Bigl(\sum_{M\in 2^\mathbb{Z}}|v_M|^2\Bigr)^{\frac{3}{4}}dx\, dt\\
	&{}\lesssim {\sum_{M, N\in 2^\mathbb{Z}, N \le M}} \int_{[0,\infty)\times \mathbb{R}^3} |v_N|^{\frac32} |v_M|^{\frac32} dx\, dt,
\end{align*}
where we have used the symmetry in the last line to reduce the matter to the case $N\le M$.
Take $r_1$ and $r_2$ so that $\frac{8}{3} < r_1<3 < r_2<\frac{10}{3}$ and $\frac23=\frac1{r_1} + \frac1{r_2}$. By the H\"older inequality,
\begin{align*}
	\int_{[0,\infty) \times \mathbb{R}^3} |v_N|^{\frac32} |v_M|^{\frac32} dx\, dt
	\le \norm{v_N}_{L_t^{r_1}([0,\infty);L_x^{r_1})} \norm{v_N}_{L_t^3([0,\infty);L_x^3)}^\frac12 
	\norm{v_M}_{L_t^3([0,\infty);L_x^3)}^\frac12 \norm{v_M}_{L_t^{r_2}([0,\infty);L_x^{r_2})}.
\end{align*}
Hence,
\begin{align*}
	\norm{e^{it\Delta}f}_{L_t^3([0,\infty);L_x^3)}^3 &{}\lesssim 
	\Bigl(\sup_{N\in 2^\mathbb{Z}} \norm{v_N}_{L_t^3([0,\infty);L_x^3)}\Bigr) \sum_{M, N\in 2^\mathbb{Z}, N \le M} \norm{v_N}_{L_t^{r_1}([0,\infty);L_x^{r_1})}  \norm{v_M}_{L_t^{r_2}([0,\infty);L_x^{r_2})}
\end{align*}
Remark that
\begin{align*}
	v_N = \mathcal{F} \psi_\frac{N}{2t} \mathcal{F}^{-1} D(t) \mathcal{F}  \mathcal{M}_\frac{1}{2}(t) f
	= D(t) \mathcal{F} \mathcal{M}_\frac{1}{2}(t) \psi_N  f = \mathcal{M}_\frac{1}{2}(t)^{-1} e^{it\Delta} \psi_N f.
\end{align*}
By the Strichartz estimate,
\begin{align*}
\norm{v_N}_{L_t^r([0,\infty);L_x^r)} 
	&=\|\mathcal{M}_\frac{1}{2}(-t)e^{it\Delta}\psi_Nf\|_{L_t^r([0,\infty);L_x^r)}\lesssim\||\nabla|^\frac{10-3r}{2r}\mathcal{M}_\frac{1}{2}(-t)e^{it\Delta}\psi_Nf\|_{L_t^r([0,\infty);L_x^\frac{6r}{16-3r})}\\
	&\lesssim \||t|^{-\frac{10-3r}{2r}}\|_{L_t^{\frac{2r}{10-3r},\infty}}\||t|^\frac{10-3r}{2r}|\nabla|^\frac{10-3r}{2r}\mathcal{M}_\frac{1}{2}(-t)e^{it\Delta}\psi_Nf\|_{L_t^{\frac{2r}{3r-8},r}([0,\infty);L_x^\frac{6r}{16-3r})}\\
	&\lesssim \norm{e^{it\Delta} \psi_N f}_{L^{\frac{2r}{3r-8},r}_t([0,\infty);\dot{X}_\frac{1}{2}^{\frac{10-3r}{2r},\frac{6r}{16-3r}})}\lesssim \norm{e^{it\Delta} \psi_N f}_{L^{\frac{2r}{3r-8},2}_t([0,\infty);\dot{X}_\frac{1}{2}^{\frac{10-3r}{2r},\frac{6r}{16-3r}})}\\
	&\lesssim \|\psi_Nf\|_{\mathcal{F}\dot{H}^\frac{10-3r}{2r}}
	=\||x|^{\frac{5}{r}-\frac{5}{3}}\psi_N|x|^\frac{1}{6}f\|_{L_x^2}
	\lesssim N^{\frac{5}{r}-\frac53} \norm{\psi_N |x|^{\frac16} f}_{L_x^2}.
\end{align*}
Thus,
\begin{align*}
	&\sum_{N \le M, M,N\in 2^\mathbb{Z}} \norm{v_N}_{L_t^{r_1}([0,\infty);L_x^{r_1})}  \norm{v_M}_{L_t^{r_2}([0,\infty);L_x^{r_2})}\\
	&{}\hspace{2.0cm} \leq \sum_{R\geq1}\sum_{N\in 2^\mathbb{Z}} \norm{v_N}_{L_t^{r_1}([0,\infty);L_x^{r_1})}  \norm{v_{NR}}_{L_t^{r_2}([0,\infty);L_x^{r_2})}\\
	&{}\hspace{2.0cm} \lesssim \sum_{R\ge1} R^{-\frac{5}{r_1}+\frac53} \sum_{N\in 2^\mathbb{Z}}\norm{\psi_N |x|^{\frac16} f}_{L_x^2} \norm{\psi_{NR} |x|^{\frac16} f}_{L_x^2}\\
	&\hspace{2.0cm} \leq \sum_{R\ge1} R^{-\frac{5}{r_1}+\frac53} \Bigl(\sum_{N\in 2^\mathbb{Z}}\norm{\psi_N |x|^{\frac16} f}_{L_x^2}^2\Bigr)^\frac{1}{2}\Bigl( \sum_{N\in2^\mathbb{Z}}\norm{\psi_{NR} |x|^{\frac16} f}_{L_x^2}^2\Bigr)^\frac{1}{2}\\
	&{}\hspace{2.0cm} =\sum_{R\ge1} R^{-\frac{5}{r_1}+\frac53}\left(\int_{\mathbb{R}^3}\sum_{N\in2^\mathbb{Z}}\left|\psi_N(x)|x|^\frac{1}{6}f(x)\right|^2dx\right)^\frac{1}{2}\left(\int_{\mathbb{R}^3}\sum_{N\in2^\mathbb{Z}}\left|\psi_{NR}(x)|x|^\frac{1}{6}f(x)\right|^2dx\right)^\frac{1}{2}\\
	&{}\hspace{2.0cm} \lesssim \norm{f}_{\mathcal{F} \dot{H}^\frac16}^2 \sum_{R\ge1} R^{-\frac{5}{r_1}+\frac53}
	 \lesssim \norm{f}_{\mathcal{F} \dot{H}^\frac16}^2.
\end{align*}
This completes the proof.
\end{proof}

\begin{proof}[Proof of Proposition \ref{P:cv}]
In what follows we denote various subsequences of $n$ again by $n$.
By the pigeon hole principle, 
\begin{align*}
	\norm{e^{i t \Delta} \Phi_n}_{L_t^{q,\infty}L_x^r}  \ge \frac{\varepsilon_0}2 \quad
\text{ or }\quad
	\norm{e^{\frac{1}{2}it \Delta} \Psi_n}_{L_t^{q,\infty}L_x^r} \ge \frac{\varepsilon_0}2
\end{align*}
holds for infinitely many $n$. We only consider the case where the former holds for infinitely many $n$.
The proof for the other case is similar.

By interpolation and boundedness lemma, there exists $\theta=\theta(q,r)>0$ such that
\begin{align*}
	\norm{e^{i t \Delta} \Phi_n}_{L_t^{q,\infty}L_x^r}  \lesssim \norm{e^{i t \Delta} \Phi_n}_{L_t^3([0,\infty);X^{\frac13,3}_{\frac12})}^\theta
	\norm{\Phi_n}_{\mathcal{F} \dot{H}^\frac12}^{1-\theta}.
\end{align*}
By means of Lemma \ref{L:iStr} and the assumption, we have
\begin{align*}
	\sup_{N\in 2^\mathbb{Z}} \norm{ e^{i t \Delta}  |x|^{\frac13} \psi_N \Phi_n}_{L^3_t([0,\infty);L_x^3)}
	\gtrsim_{M,\varepsilon_0,q,r} 1.
\end{align*}
One can choose a sequence $N_n$ so that
\begin{equation}\label{E:cvpf0}
	\norm{e^{i t \Delta}|x|^{\frac13}\psi_{N_n} \Phi_n}_{L_t^{3}([0,\infty);L_x^3)}
	\gtrsim 1.
\end{equation}
Since the scaling property and Strichartz' estimate give us
\begin{align*}
	\norm{e^{i t \Delta}|x|^{\frac13}\psi_{N_n} \Phi_n}_{L_t^{3}([0,\tau N_n^2];L_x^3)}
	&{}= N_n^{\frac53}\norm{e^{i t \Delta} (N_n|x|)^{\frac13}\psi \Phi_n(N_n \cdot)}_{L_t^{3}([0,\tau];L_x^3)}\\
	&{}\leq N_n^\frac{5}{3} \|1\|_{L_t^{12}([0,\tau])}\norm{e^{i t \Delta} (N_n|x|)^{\frac13}\psi \Phi_n(N_n \cdot)}_{L^4_t([0,\tau];L_x^3)}\\
	&{}\lesssim N_n^{\frac53} \tau^{\frac1{12}} \norm{(N_n|x|)^{\frac13}\psi \Phi_n(N_n \cdot)}_{L_x^2}\\
	&\lesssim \tau^{\frac1{12}} \norm{\Phi_n}_{\mathcal{F} \dot{H}^{\frac12}}
	\lesssim \tau^{\frac1{12}},
\end{align*}
one can choose $\tau_0=\tau_0(M,\varepsilon_0,q,r)>0$ small so that \eqref{E:cvpf0} is improved as
\begin{align*}
	\norm{e^{i t \Delta}|x|^{\frac13}\psi_{N_n} \Phi_n}_{L_t^{3}([\tau_0N_n^2,\infty);L_x^3)}
	\gtrsim 1
\end{align*}
for all $n\ge1$.
H\"older's inequality gives us
\begin{align*}
	&\norm{e^{i t \Delta}|x|^{\frac13}\psi_{N_n} \Phi_n}_{L_t^3([\tau_0N_n^2,\infty);L_x^3)}\\
	&\hspace{2.0cm} \le \norm{ |t|^{\frac32} e^{i t \Delta}|x|^{\frac13}\psi_{N_n} \Phi_n}_{L_t^{\infty}([\tau_0N_n^2,\infty);L_x^\infty)}^{\frac1{18}}
	\norm{|t|^{-\frac{3}{34}}e^{i t \Delta}|x|^{\frac13}\psi_{N_n} \Phi_n}_{L_t^{\frac{17}6}([\tau_0N_n^2,\infty);L_x^\frac{17}{6})}^{\frac{17}{18}}.
\end{align*}
Using the estimate
\begin{align*}
	\norm{|t|^{-\frac{3}{34}}e^{i t \Delta}|x|^{\frac13}\psi_{N_n} \Phi_n}_{L_t^{\frac{17}6}L_x^\frac{17}{6}}
	&{}\lesssim \||t|^{-\frac{3}{34}}\|_{L^{\frac{34}{3},\infty}}\norm{e^{i t \Delta}|x|^{\frac13}\psi_{N_n} \Phi_n}_{L^{\frac{34}{9},\frac{17}6}_t L^{\frac{17}6}_{x}} \\
	&\lesssim \norm{|\nabla|^\frac{3}{34}\mathcal{M}_\frac{1}{2}(-t)e^{i t \Delta}|x|^{\frac13}\psi_{N_n} \Phi_n}_{L^{\frac{34}{9},\frac{17}6}_t L^{\frac{34}{13}}_{x}} \\
	&{}\lesssim \||t|^{-\frac{3}{34}}\|_{L^{\frac{34}{3},\infty}}\norm{e^{i t \Delta}|x|^{\frac13}\psi_{N_n} \Phi_n}_{L^{\frac{17}{3},\frac{17}{6}}_t \dot{X}^{\frac3{34},\frac{34}{13}}_{\frac12}} \\
	&{}\lesssim \norm{e^{i t \Delta}|x|^{\frac13}\psi_{N_n} \Phi_n}_{L^{\frac{17}{3},2}_t \dot{X}^{\frac3{34},\frac{34}{13}}_{\frac12}}\\
	&{}\lesssim \norm{|x|^{\frac13+\frac3{34}}\psi_{N_n} \Phi_n}_{L_x^2}\\
	&{}\lesssim N_n^{-\frac4{51}},
\end{align*}
we reach to the estimate
\begin{align*}
	N_n^{-\frac43} \norm{ |t|^{\frac32} e^{i t \Delta}|x|^{\frac13}\psi_{N_n} \Phi_n}_{L_t^{\infty}([\tau_0N_n^2,\infty);L_x^\infty)}
	\gtrsim 1
\end{align*}
for all $n\ge1$.
There exist $t_n \ge \tau_0 N_n^2$ and $y_n \in \mathbb{R}^3$ such that
\begin{align}
	N_n^{-\frac43} | t_n^{\frac32} e^{i t_n \Delta}(|x|^{\frac13}\psi_{N_n} \Phi_n)(y_n)|
	\gtrsim 1. \label{E:cvpf1}
\end{align}
By the integral representation of the Schr\"odinger group, we obtain
\begin{align}
	&N_n^{-\frac43}|t_n^{\frac32} e^{i t_n \Delta}(|x|^{\frac13}\psi_{N_n} \Phi_n)(y_n)| \notag \\
	&{}\hspace{3.0cm} =N_n^{-\frac{4}{3}}\left|t_n^\frac{3}{2}(4\pi it_n)^{-\frac{3}{2}}\int_{\mathbb{R}^3}e^{\frac{i|x-y_n|^2}{4t_n}}|x|^\frac{1}{3}\psi_{N_n}(x)\Phi_n(x)dx\right| \notag \\
	&{}\hspace{3.0cm} \lesssim N_n^{-\frac32}\left| \int_{\mathbb{R}^3} e^{-i\frac{y_n}{2t_n}\cdot x} e^{i\frac{|x|^2}{4t_n}} (N_n^{\frac16}|x|^{-\frac16} \psi_{N_n}(x)) |x|^{\frac12}\Phi_n(x) dx
	\right| \notag \\
	&{}\hspace{3.0cm} = \left| \int_{\mathbb{R}^3} e^{i\frac{N_n^2|x|^2}{4t_n}} (|x|^{-\frac16} \psi(x)) |x|^{\frac12} (e^{-i\frac{N_ny_n}{2t_n}\cdot x}N_n^{2}\Phi_n(N_n x))dx \label{E:cvpf2}
	\right| .
\end{align}
Let
\begin{align*}
	\xi_n := - \frac{N_n y_n}{2t_n} \in \mathbb{R}^3, \quad h_n := N_n \in 2^{\mathbb{Z}}.
\end{align*}
Define a deformation $\mathcal{G}_n \in G$ so that $\mathcal{G}_n^{-1} = T(\xi_n)D(h_n)$.

Since $\{\mathcal{G}_n (\Phi_n,\Psi_n)\}_n$ is a bounded sequence in $\mathcal{F}\dot{H}^{\frac12} \times \mathcal{F}\dot{H}^{\frac12}$,
it weakly converges to a pair $(\widetilde{\Phi}, \widetilde{\Psi})\in \mathcal{F}\dot{H}^{\frac12} \times \mathcal{F}\dot{H}^{\frac12}$
along a subsequence.
It is obvious that $(\widetilde{\Phi}, \widetilde{\Psi}) \in \nu (\{(\Phi_n,\Psi_n)\})$.
Notice that $0<\frac{N_n^2}{4t_n} \le \frac{1}{4\tau_0}$. Hence, by extracting a subsequence if necessary, one has
\begin{align*}
	\int_{\mathbb{R}^3} e^{i\frac{N_n^2|x|^2}{4t_n}} (|x|^{-\frac16} \psi(x)) |x|^{\frac12} (e^{-i\frac{N_ny_n}{2t_n}\cdot x}N_n^{2}\Phi_n(N_n x))  dx
	\longrightarrow 
	\int_{\mathbb{R}^3} e^{ia |x|^2 } (|x|^{-\frac16} \psi(x)) |x|^{\frac12} \widetilde{\Phi}(x)  dx
\end{align*}
as $n\to\infty$, where $a\in \mathbb{R}$ is the limit of $\frac{N_n^2}{4t_n}$ along the (sub)sequence.
Plugging this with \eqref{E:cvpf1} and \eqref{E:cvpf2}, we conclude that
\begin{align*}
	1 \lesssim \left| \int_{\mathbb{R}^3} e^{ia |x|^2 } (|x|^{-\frac16} \psi(x)) |x|^{\frac12} \widetilde{\Phi}(x)  dx\right|
	\lesssim_\psi \norm{\widetilde{\Phi}}_{\mathcal{F} \dot{H}^{\frac12}}
	\le \norm{(\widetilde{\Phi},\widetilde{\Psi})}_{\mathcal{F} \dot{H}^{\frac12}\times \mathcal{F} \dot{H}^{\frac12}} 
	\le \eta (\{(\Phi_n,\Psi_n)\}).
\end{align*}
This is the desired estimate.
\end{proof}


\section{Proof of Theorem \ref{T:l0}, Theorem \ref{T:case1}, and Theorem \ref{T:case2}}\label{Proof of main theorem}

In this section, we prove Theorem \ref{T:l0}, Theorem \ref{T:case1}, and Theorem \ref{T:case2}.
The following proof shows all these theorems. 

\begin{proof}[Proof of Theorems \ref{T:l0}, Theorems \ref{T:case1}, and Theorem \ref{T:case2}]
Fix $v_0\in\mathcal{F}\dot{H}^\frac{1}{2}$.
First, we consider the case $\ell_{v_0}^\dagger=\infty$.
In this case, we can obtain $\ell_{v_0}^\dagger=\ell_{v_0}=\ell_0=\infty$.
Indeed, we have $\infty=\ell_{v_0}^\dagger\leq\ell_{v_0}$ by Lemma \ref{Comparison of ell and ell^dagger}.
On the other hand, we have $\infty=\ell_{v_0}^\dagger\leq \ell_0^\dagger\leq \ell_0$ by Lemma \ref{P:l0lv0} and Lemma \ref{Comparison of ell and ell^dagger}.

From now on, we assume $\ell_{v_0}^\dagger<\infty$.
By definition of $\ell_{v_0}^\dagger$, we have $L_{v_0}(\ell_{v_0}^\dagger-\frac{1}{n})<\infty$ for each $n\in \mathbb{N}$, that is,
\begin{equation*}
\sup\left\{\|(u,v)\|_{W_1([0,\infty))\times W_2([0,\infty))}\middle|
\begin{array}{l}
(u,v)\text{ is the solution to \eqref{NLS} on }[0,\infty),\\[0.1cm]
v(0)=v_0,\,\,
\|u(0)\|_{\mathcal{F}\dot{H}^\frac{1}{2}}\leq\ell_{v_0}^\dagger-\frac{1}{n}
\end{array}
\right\}<\infty.
\end{equation*}
We note that $T_{\max}=\infty$ because of Proposition \ref{Scattering criterion}.
Since $L_{v_0}(\ell)<\infty$ for any $0\leq \ell<\ell_{v_0}^\dagger$, $L_{v_0}(\ell_{v_0}^\dagger)=\infty$, and $L_{v_0}$ is non-decreasing, we can take a sequence $\{m_n\}$ of $\mathbb{N}$ such that
\begin{align*}
L_{v_0}\Bigl(\ell_{v_0}^\dagger-\frac{1}{m_n}\Bigr)
	<L_{v_0}\Bigl(\ell_{v_0}^\dagger-\frac{1}{m_{n+1}}\Bigr)
\end{align*}
for each $n\in \mathbb{N}$.
We take a sequence $\{u_{0,n}\}\in\mathcal{F}\dot{H}^\frac{1}{2}$ satisfying
\begin{align}
\ell^\dagger_{v_0}-\frac{1}{m_n}
	< \|u_{0,n}\|_{\mathcal{F}\dot{H}^\frac{1}{2}}
	\leq \ell^\dagger_{v_0}-\frac{1}{m_{n+1}} \label{023}
\end{align}
and
\begin{align*}
L_{v_0}\Bigl(\ell_{v_0}^\dagger-\frac{1}{m_n}\Bigr)
	<\|(u_n,v_n)\|_{W_1([0,\infty))\times W_2([0,\infty))}
	\leq L_{v_0}\Bigl(\ell_{v_0}^\dagger-\frac{1}{m_{n+1}}\Bigr),
\end{align*}
where $(u_n,v_n)$ is the solution to \eqref{NLS} with the initial data $(u_{0,n},v_0)$.
Since $\{(u_{0,n},v_0)\}\subset \mathcal{F}\dot{H}^\frac{1}{2}\!\times\!\mathcal{F}\dot{H}^\frac{1}{2}$ is a bounded sequence, we apply Theorem \ref{Linear profile decomposition} to this sequence.
Then, there exists profile $\{(\phi^j,\psi^j)\}\subset\mathcal{F}\dot{H}^\frac{1}{2}\!\times\!\mathcal{F}\dot{H}^\frac{1}{2}$, remainder $\{(R_n^J,L_n^J)\}\subset \mathcal{F}\dot{H}^\frac{1}{2}\!\times\!\mathcal{F}\dot{H}^\frac{1}{2}$, and pairwise orthogonal families of deformations $\{\mathcal{G}_n^j=T(\xi_n^j)D(h_n^j)\}_n\subset G$ $(j=1,2,\ldots)$ such that
\begin{align}
(u_{0,n},v_0)=\sum_{j=1}^J\mathcal{G}_n^j(\phi^j,\psi^j)+(R_n^J,L_n^J) \label{026}
\end{align}
for any $J\geq 1$.
Since $v_0$ is independent of $n$, there exists unique $j_0$ such that $\psi^{j_0}=v_0$ and $\mathcal{G}_n^{j_0}=\text{Id}$.
Furthermore, the remainder for $v$-component is zero: $L_n^J=0$.
Rearranging the profile $(\phi^j,\psi^j)$, we may let $j_0=1$.
Then, the above decomposition reads as
\begin{align*}
(u_{0,n},v_0)
	=(\phi^1,v_0)+\sum_{j=2}^J\mathcal{G}_n^j(\phi^j,0)+(R_n^J,0).
\end{align*}
From Theorem \ref{Linear profile decomposition}, we have Pythagorean decomposition:
\begin{align}
\|u_{0,n}\|_{\mathcal{F}\dot{H}^\frac{1}{2}}^2=\sum_{j=1}^J\|\phi^j\|_{\mathcal{F}\dot{H}^\frac{1}{2}}^2+\|R_n^J\|_{\mathcal{F}\dot{H}^\frac{1}{2}}^2+o_n(1) \label{027}
\end{align}
for each $J\geq 1$.
The parameters are asymptotically orthogonal: if $j\neq k$, then
\begin{align}
\frac{h_n^j}{h_n^k}+\frac{h_n^k}{h_n^j}+\frac{|\xi_n^j-\xi_n^k|}{h_n^j}\longrightarrow\infty\ \ \text{ as }\ \ n\rightarrow\infty. \label{028}
\end{align}
The remainders satisfy
\begin{align*}
(\mathcal{G}_n^j)^{-1}R_n^J\xrightharpoonup[]{\hspace{0.4cm}}0\ \ \text{ in }\ \ \mathcal{F}\dot{H}^\frac{1}{2}\ \ \text{ as }\ \ n\rightarrow\infty
\end{align*}
for any $1\leq j\leq J$.
\begin{align}
\lim_{J\rightarrow\infty}\limsup_{n\rightarrow\infty}\|e^{it\Delta}R_n^J\|_{L_t^{q,\infty}L_x^r}=0 \label{030}
\end{align}
for any $1<q,r<\infty$ with $\frac{1}{q}\in(\frac{1}{2},1)$ and $\frac{2}{q}+\frac{3}{r}=2$.

We will prove that there exists only one $j_1$ satisfying $\phi^{j_1}\neq0$, and it satisfies
$\|\phi^{j_1}\|_{\mathcal{F}\dot{H}^\frac{1}{2}}=\ell_{v_0}^\dagger$. 
From \eqref{027}, we have
\begin{align}
\sum_{j=1}^J\|\phi^j\|_{\mathcal{F}\dot{H}^\frac{1}{2}}^2\leq (\ell_{v_0}^\dagger)^2, \label{031}
\end{align}
and hence, $\|\phi^j\|_{\mathcal{F}\dot{H}^\frac{1}{2}}\leq\ell_{v_0}^\dagger$ holds for any $j\geq 1$.
Let $(\Phi_j,\Psi_j)$ be the solution to \eqref{NLS} with a initial data $(\phi^j,\psi^j)$.
We assume for contradiction that all $(\Phi_j,\Psi_j)$ scatter forward in time, that is,
\begin{align*}
\|(\Phi_j,\Psi_j)\|_{W_1([0,\infty))\times W_2([0,\infty))}<\infty
\end{align*}
is true for any $ j \ge 1$.
We set
\begin{align*}
	(\widetilde{w}_n^J,\widetilde{z}_n^J)
		:=\sum_{j=1}^J\left((\Phi_j)_{[h_n^j,\xi_n^j]}(t,x),(\Psi_j)_{[h_n^j,\xi_n^j]}(t,x)\right)
\end{align*}
and
\begin{align*}
	(\widetilde{u}_n^J,\widetilde{v}_n^J)
		:=(\widetilde{w}_n^J,\widetilde{z}_n^J)+(e^{it\Delta}R_n^J,0),
\end{align*}
where
\begin{align*}
	(\Phi_j)_{[h_n^j,\xi_n^j]}(t,x)
		:={h_n^j}^2e^{ix\cdot\xi_n^j}e^{-it|\xi_n^j|^2}\Phi_j({h_n^j}^2t,h_n^j(x-2t\xi_n^j)),
\end{align*}
\begin{align*}
(\Psi_j)_{[h_n^j,\xi_n^j]}(t,x):={h_n^j}^2e^{2ix\cdot\xi_n^j}e^{-2it|\xi_n^j|^2}\Psi_j({h_n^j}^2t,h_n^j(x-2t\xi_n^j)).
\end{align*}
We note that $((\Phi_j)_{[h_n^j,\xi_n^j]},(\Psi_j)_{[h_n^j,\xi_n^j]})$ is a solution to \eqref{NLS} with a initial data $\mathcal{G}_n^j(\phi^j,\psi^j)$.
Then, $(\widetilde{u}_n^J,\widetilde{v}_n^J)$ solves
\begin{gather*}
	i\partial_t\widetilde{u}_n^J+\Delta\widetilde{u}_n^J
		=\sum_{j=1}^J\left(i\partial_t(\Phi_j)_{[h_n^j,\xi_n^j]}+\Delta(\Phi_j)_{[h_n^j,\xi_n^j]}\right)
		=-2\sum_{j=1}^J(\Psi_j)_{[h_n^j,\xi_n^j]}\overline{(\Phi_j)_{[h_n^j,\xi_n^j]}},\\
	i\partial_t\widetilde{v}_n^J+\frac{1}{2}\Delta\widetilde{v}_n^J
		=\sum_{j=1}^J\left(i\partial_t(\Psi_j)_{[h_n^j,\xi_n^j]}+\frac{1}{2}\Delta(\Psi_j)_{[h_n^j,\xi_n^j]}\right)
		=-\sum_{j=1}^J(\Phi_j)_{[h_n^j,\xi_n^j]}^2.
\end{gather*}
We also set
\begin{align*}
\widetilde{e}_{1,n}^J:=i\partial_t\widetilde{u}_n^J+\Delta\widetilde{u}_n^J+2\widetilde{v}_n^J\overline{\widetilde{u}_n^J},
\end{align*}
\begin{align*}
\widetilde{e}_{2,n}^J:=i\partial_t\widetilde{v}_n^J+\frac{1}{2}\Delta\widetilde{v}_n^J+(\widetilde{u}_n^J)^2.
\end{align*}
Here, we introduce the following two lemmas.

\begin{lemma}\label{Lemma1 for main result}
For any $\varepsilon>0$, there exists $J_0=J_0(\varepsilon)$ such that
\begin{align*}
\limsup_{n\rightarrow\infty}\|(\widetilde{w}_n^J,\widetilde{z}_n^J)-(\widetilde{w}_n^{J_0},\widetilde{z}_n^{J_0})\|_{W_1([0,\infty))\times W_2([0,\infty))}\leq\varepsilon
\end{align*}
for any $J\geq J_0$.
\end{lemma}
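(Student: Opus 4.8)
The difference to be estimated is the tail of the profile sum,
\[
(\widetilde{w}_n^J,\widetilde{z}_n^J)-(\widetilde{w}_n^{J_0},\widetilde{z}_n^{J_0})=\sum_{j=J_0+1}^J\Bigl((\Phi_j)_{[h_n^j,\xi_n^j]},(\Psi_j)_{[h_n^j,\xi_n^j]}\Bigr),
\]
so the plan is to show that, after discarding the first $J_0$ profiles, the remaining transformed profiles are small in $W_1\times W_2$ uniformly in $n$ and in $J$. The starting point is the Pythagorean identity \eqref{027}, which yields $\sum_{j\geq1}\|\phi^j\|_{\mathcal{F}\dot{H}^\frac{1}{2}}^2\leq(\ell_{v_0}^\dagger)^2<\infty$, hence $\|\phi^j\|_{\mathcal{F}\dot{H}^\frac{1}{2}}\to0$. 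Recalling that $\psi^j=0$ for all $j\geq2$, I would fix $J_0$ so large that $J_0\geq2$, that $\|\phi^j\|_{\mathcal{F}\dot{H}^\frac{1}{2}}$ lies below the small-data threshold $\eta_3$ of Proposition \ref{Small data scattering} for every $j>J_0$, and that the tail $\sum_{j>J_0}\|\phi^j\|_{\mathcal{F}\dot{H}^\frac{1}{2}}^2$ is as small as desired. For such $j$ the data $(\phi^j,0)$ is small, so the small-data theory (Proposition \ref{Small data scattering}, Proposition \ref{Scattering criterion}, Proposition \ref{Strichartz estimates}, Lemma \ref{Nonlinear estimates}) gives the global a priori bounds $\|\Phi_j\|_{W_1}\lesssim\|\phi^j\|_{\mathcal{F}\dot{H}^\frac{1}{2}}$, $\|\Psi_j\|_{W_2}\lesssim\|\phi^j\|_{\mathcal{F}\dot{H}^\frac{1}{2}}^2$, together with the Duhamel-difference bound $\|\Phi_j-e^{it\Delta}\phi^j\|_{W_1}\lesssim\|\phi^j\|_{\mathcal{F}\dot{H}^\frac{1}{2}}^2$, all with constants independent of $j$.

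The $v$-component is then immediate. Since the scaling--Galilean transforms leave the $W_2$-norm invariant, $\|(\Psi_j)_{[h_n^j,\xi_n^j]}\|_{W_2}=\|\Psi_j\|_{W_2}\lesssim\|\phi^j\|_{\mathcal{F}\dot{H}^\frac{1}{2}}^2$, so the triangle inequality gives $\limsup_{n\to\infty}\|\widetilde{z}_n^J-\widetilde{z}_n^{J_0}\|_{W_2}\lesssim\sum_{j>J_0}\|\phi^j\|_{\mathcal{F}\dot{H}^\frac{1}{2}}^2$, independently of $J$. For the $u$-component I would split each transformed profile, using that these transforms intertwine the free evolution (the identities \eqref{008}) and preserve the $W_1$-norm, as
\[
(\Phi_j)_{[h_n^j,\xi_n^j]}=e^{it\Delta}\bigl(\mathcal{G}_n^j\phi^j\bigr)+\bigl(\Phi_j-e^{it\Delta}\phi^j\bigr)_{[h_n^j,\xi_n^j]}.
\]
The second terms are controlled by the triangle inequality and the Duhamel-difference bound, contributing $\lesssim\sum_{j>J_0}\|\phi^j\|_{\mathcal{F}\dot{H}^\frac{1}{2}}^2$. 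For the sum of the linear terms, $\sum_{j=J_0+1}^Je^{it\Delta}(\mathcal{G}_n^j\phi^j)=e^{it\Delta}\bigl(\sum_{j=J_0+1}^J\mathcal{G}_n^j\phi^j\bigr)$, and one application of Proposition \ref{Strichartz estimates} bounds its $W_1$-norm by $\|\sum_{j=J_0+1}^J\mathcal{G}_n^j\phi^j\|_{\mathcal{F}\dot{H}^\frac{1}{2}}$. Here I would use that $\mathcal{F}\dot{H}^\frac{1}{2}$ is a Hilbert space on which each $\mathcal{G}_n^j$ is an isometry, while the pairwise orthogonality of $\{\mathcal{G}_n^j\}$ kills the cross terms: by Proposition \ref{Equivalent vanishing 2}, $(\mathcal{G}_n^j)^{-1}\mathcal{G}_n^k\phi^k\rightharpoonup0$ for $j\neq k$, hence $\langle\mathcal{G}_n^j\phi^j,\mathcal{G}_n^k\phi^k\rangle\to0$; therefore $\limsup_{n\to\infty}\|\sum_{j=J_0+1}^J\mathcal{G}_n^j\phi^j\|_{\mathcal{F}\dot{H}^\frac{1}{2}}^2=\sum_{j=J_0+1}^J\|\phi^j\|_{\mathcal{F}\dot{H}^\frac{1}{2}}^2\leq\sum_{j>J_0}\|\phi^j\|_{\mathcal{F}\dot{H}^\frac{1}{2}}^2$.

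Combining these estimates,
\[
\limsup_{n\to\infty}\bigl\|(\widetilde{w}_n^J,\widetilde{z}_n^J)-(\widetilde{w}_n^{J_0},\widetilde{z}_n^{J_0})\bigr\|_{W_1\times W_2}\lesssim\Bigl(\sum_{j>J_0}\|\phi^j\|_{\mathcal{F}\dot{H}^\frac{1}{2}}^2\Bigr)^{1/2}+\sum_{j>J_0}\|\phi^j\|_{\mathcal{F}\dot{H}^\frac{1}{2}}^2,
\]
and the right-hand side is independent of $J\geq J_0$ and tends to $0$ as $J_0\to\infty$; choosing $J_0=J_0(\varepsilon)$ so that it is $\leq\varepsilon$ completes the proof. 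The main obstacle, and the reason for routing the $u$-part through the free evolution, is that a genuinely $\ell^2$-type decoupling — which is what makes the bound uniform in $J$ — is available only for the \emph{linear} flow via the Hilbert-space orthogonality above, and not directly for the nonlinear Strichartz norms $W_1,W_2$ (for which an $L^p$-decoupling through asymptotically disjoint supports of rescaled profiles would give only an $\ell^p$, hence $J$-dependent, bound). A second point to verify, routine in this mass-subcritical framework but worth stating, is the exact invariance of the $W_j$-norms under $f\mapsto f_{[h,\xi]}$, which rests on the scaling-criticality of the exponents defining $W_j$ and on $J_m$ being a power of the Galilean operator.
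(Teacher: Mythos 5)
Your argument is correct, but it is genuinely different from the paper's. The paper treats the tail $(\widetilde{u}_n^{J,J_0},\widetilde{v}_n^{J,J_0})=\sum_{j=J_0+1}^J((\Phi_j)_{[h_n^j,\xi_n^j]},(\Psi_j)_{[h_n^j,\xi_n^j]})$ as an \emph{approximate solution} of the system: the error consists exactly of the nonlinear cross-interactions between distinct profiles, which vanish as $n\to\infty$ by a decoupling lemma (Lemma \ref{Lemma3 for main result}, proved via a compact space-time support reduction and the asymptotic disjointness of the supports of rescaled, boosted profiles); combined with the Pythagorean smallness of the tail data, Strichartz and Lemma \ref{Nonlinear estimates} yield a bootstrap inequality $X\le c\varepsilon+cX^2$ that closes. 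You avoid the nonlinear decoupling altogether: you handle the $v$-components and the Duhamel parts of the $u$-components by brute-force triangle inequality, which works only because the small-data theory gives the \emph{quadratic} gains $\|\Psi_j\|_{W_2}\lesssim\|\phi^j\|_{\mathcal{F}\dot{H}^{1/2}}^2$ and $\|\Phi_j-e^{it\Delta}\phi^j\|_{W_1}\lesssim\|\phi^j\|_{\mathcal{F}\dot{H}^{1/2}}^2$, turning the $\ell^2$ information from the Pythagorean identity into $\ell^1$ summability; the only genuinely almost-orthogonal cancellation you need is the Hilbert-space one for the free evolutions, via Proposition \ref{Equivalent vanishing 2}. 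Your route is more elementary (no space-time support decoupling, no bootstrap), at the cost of requiring the slightly finer quantitative output of the small-data theory and the exact invariance of $W_1$, $W_2$ under the scaling--Galilean transforms, which you correctly flag; the paper's route is heavier here but builds exactly the machinery (Lemma \ref{Lemma3 for main result}) that is reused in the proof of Lemma \ref{Lemma2 for main result}, where the interaction with the remainder $e^{it\Delta}R_n^J$ cannot be handled by summability alone.
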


\begin{lemma}\label{Lemma2 for main result}
It follows that
\begin{align*}
\lim_{J\rightarrow\infty}\limsup_{n\rightarrow\infty}\|(\widetilde{e}_{1,n}^J,\widetilde{e}_{2,n}^J)\|_{N_1([0,\infty))\times N_2([0,\infty))}=0.
\end{align*}
\end{lemma}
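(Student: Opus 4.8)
The plan is to write the two errors out explicitly, reduce to finitely many profiles with the help of Lemma \ref{Lemma1 for main result}, eliminate the cross terms between distinct profiles via the orthogonality \eqref{028}, and control the contributions of the linear remainder $e^{it\Delta}R_n^J$ through the smallness \eqref{030}. Since each pair $((\Phi_j)_{[h_n^j,\xi_n^j]},(\Psi_j)_{[h_n^j,\xi_n^j]})$ solves \eqref{NLS} while $e^{it\Delta}R_n^J$ solves the free equation, expanding $\widetilde{v}_n^J\overline{\widetilde{u}_n^J}$ and $(\widetilde{u}_n^J)^2$ gives
\begin{align*}
	\widetilde{e}_{1,n}^J &= 2\sum_{\substack{1\le j,k\le J\\ j\ne k}}(\Psi_k)_{[h_n^k,\xi_n^k]}\,\overline{(\Phi_j)_{[h_n^j,\xi_n^j]}} + 2\,\widetilde{z}_n^J\,\overline{e^{it\Delta}R_n^J},\\
	\widetilde{e}_{2,n}^J &= \sum_{\substack{1\le j,k\le J\\ j\ne k}}(\Phi_j)_{[h_n^j,\xi_n^j]}\,(\Phi_k)_{[h_n^k,\xi_n^k]} + 2\,\widetilde{w}_n^J\,e^{it\Delta}R_n^J + (e^{it\Delta}R_n^J)^2 .
\end{align*}
Hence there are three kinds of terms to bound in $N_1(\text{resp.\ }N_2)$: cross terms between two distinct profiles, mixed terms pairing $e^{it\Delta}R_n^J$ with $\widetilde{z}_n^J$ or $\widetilde{w}_n^J$, and the quadratic remainder term.

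I would first reduce to finitely many profiles. By Lemma \ref{Lemma1 for main result}, for any $\varepsilon>0$ there is $J_0$ with $\limsup_n\|(\widetilde{w}_n^J-\widetilde{w}_n^{J_0},\widetilde{z}_n^J-\widetilde{z}_n^{J_0})\|_{W_1([0,\infty))\times W_2([0,\infty))}\le\varepsilon$ for $J\ge J_0$; telescoping each cross term and each mixed term, and using the bilinear estimates of Lemma \ref{Nonlinear estimates} together with the uniform bounds on $\widetilde{w}_n^J,\widetilde{z}_n^J$ in $W_1\times W_2$ (which follow from orthogonality and $\sum_j\|(\Phi_j,\Psi_j)\|_{W_1\times W_2}^2<\infty$) and the Strichartz bound $\|e^{it\Delta}R_n^J\|_{W_1}\lesssim\|R_n^J\|_{\mathcal{F}\dot{H}^{1/2}}\lesssim 1$, it suffices to prove the claim with all sums restricted to $j,k\le J_0$ and with $\widetilde{w}_n^J,\widetilde{z}_n^J$ replaced by $\widetilde{w}_n^{J_0},\widetilde{z}_n^{J_0}$, up to an error $O(\varepsilon)$. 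For the finitely many cross terms I would fix $j\ne k\le J_0$, approximate $\Phi_j,\Psi_j,\Phi_k,\Psi_k$ in $W_1$/$W_2$ by smooth functions compactly supported in space-time (controlling the approximation error by Lemma \ref{Nonlinear estimates}), and exploit that the $N_1$, $N_2$ norms transform covariantly under the Galilean and scaling symmetries: undoing $\mathcal{G}_n^j$ by the corresponding change of variables reduces the $(j,k)$ term to a product of a fixed test function with a space-time translate/dilate/modulate of another test function whose parameters are those of $(\mathcal{G}_n^j)^{-1}\mathcal{G}_n^k$, so by \eqref{028} the two supports separate and the product eventually vanishes; finiteness of the set of pairs gives $\limsup_n=0$.

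The remainder terms come next. The quadratic one is immediate: by Lemma \ref{Nonlinear estimates} and Proposition \ref{Strichartz estimates},
\[
	\|(e^{it\Delta}R_n^J)^2\|_{N_2}\lesssim\|e^{it\Delta}R_n^J\|_{W_1}\,\|e^{it\Delta}R_n^J\|_{S^{\text{weak}}}\lesssim\|R_n^J\|_{\mathcal{F}\dot{H}^{1/2}}\,\|e^{it\Delta}R_n^J\|_{S^{\text{weak}}},
\]
and the last factor tends to $0$ by \eqref{030}, since $S^{\text{weak}}=L_t^{3/2,\infty}L_x^{9/2}$ is of the form admitted there. For the mixed terms the naive splitting $\|\widetilde{w}_n^{J_0}e^{it\Delta}R_n^J\|_{N_2}\lesssim\|\widetilde{w}_n^{J_0}\|_{W_1}\|e^{it\Delta}R_n^J\|_{S^{\text{weak}}}+\|\widetilde{w}_n^{J_0}\|_{S^{\text{weak}}}\|e^{it\Delta}R_n^J\|_{W_1}$ is not enough, because $\|e^{it\Delta}R_n^J\|_{W_1}$ stays bounded away from zero. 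Instead I would treat each profile separately: writing $\widetilde{w}_n^{J_0}e^{it\Delta}R_n^J=\sum_{j\le J_0}(\Phi_j)_{[h_n^j,\xi_n^j]}\,e^{it\Delta}R_n^J$ and undoing $\mathcal{G}_n^j$ in the $j$-th term turns its $N_2$-norm into $\|\Phi_j\cdot e^{it\Delta}g_n^j\|_{N_2}$, where $g_n^j:=(\mathcal{G}_n^j)^{-1}R_n^J\rightharpoonup 0$ in $\mathcal{F}\dot{H}^{1/2}$ by the properties of the remainder; after replacing $\Phi_j$ by a smooth compactly supported (and, if needed, frequency-localized) approximant $\Phi_j^\varepsilon$ at the cost of $O(\varepsilon)$ (again by Lemma \ref{Nonlinear estimates} and the uniform Strichartz bound on $e^{it\Delta}g_n^j$), one is left to show $\|\Phi_j^\varepsilon e^{it\Delta}g_n^j\|_{N_2}\to 0$. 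This should follow from the compactness of the map $g\mapsto \Phi_j^\varepsilon e^{it\Delta}g$ from $\mathcal{F}\dot{H}^{1/2}$ into the relevant space-time norm, which rests on the local compactness of multiplication by a compactly supported function on $\dot{X}_{1/2}^{1/2}(t)$ (Lemma \ref{Bounded multiplication operator} together with a Rellich-type argument), interpolated against the uniform Strichartz bound to absorb the derivative and the remaining Lorentz integrability. The term $\widetilde{z}_n^J\overline{e^{it\Delta}R_n^J}$ in $N_1$ is handled identically. Combining the steps and letting $\varepsilon\to 0$ proves the lemma.

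I expect the mixed remainder terms to be the main obstacle: showing that a fixed localized profile annihilates the free evolution of a weakly-null remainder amounts to upgrading weak convergence in $\mathcal{F}\dot{H}^{1/2}$ to strong local convergence for the Schr\"odinger flow in this (non-unitary) weighted setting, which is the one genuinely non-routine ingredient; the orthogonality bookkeeping for the cross terms and the reduction to finitely many profiles are standard once Lemmas \ref{Lemma1 for main result}, \ref{Nonlinear estimates}, and \ref{Bounded multiplication operator} are available.
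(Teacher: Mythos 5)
Your overall architecture matches the paper's: expand the errors into cross terms between distinct profiles, mixed terms pairing a profile with $e^{it\Delta}R_n^J$, and the pure remainder term; reduce to finitely many profiles via Lemma \ref{Lemma1 for main result}; kill the cross terms by compact-support approximation plus the orthogonality \eqref{028}; and kill the pure remainder term by Lemma \ref{Nonlinear estimates} together with \eqref{030}. All of that is how the paper proceeds.

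The gap is in your treatment of the mixed terms $\Phi_j^\varepsilon\, e^{it\Delta}g_n^{j,J}$. You propose to conclude from the weak convergence $g_n^{j,J}=(\mathcal{G}_n^j)^{-1}R_n^J\rightharpoonup0$ via "compactness of the map $g\mapsto\Phi_j^\varepsilon e^{it\Delta}g$", resting on "local compactness of multiplication by a compactly supported function on $\dot{X}_{1/2}^{1/2}(t)$". That operator is bounded (Lemma \ref{Bounded multiplication operator}) but not compact: the target norm $N_2$ still carries the weight $J_m^{1/2}$, and when the half-derivative in the Leibniz expansion falls on $e^{it\Delta}g$ rather than on $\Phi_j^\varepsilon$, one is left with a localized norm of the \emph{same} regularity as the input, and cutting off in space-time at fixed regularity gains nothing; local smoothing supplies exactly the half derivative already needed, with no surplus to feed a Rellich argument. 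Two symptoms confirm something is off: (i) your argument would make each mixed term vanish as $n\to\infty$ for every \emph{fixed} $J\ge J_0$, whereas in the paper this is precisely the place where the double limit $\lim_J\limsup_n$ and the quantitative decay \eqref{030} are indispensable; (ii) you never invoke \eqref{030} for the mixed terms, only for $(e^{it\Delta}R_n^J)^2$. The paper's route is quantitative rather than compactness-based: after the Galilean change of variables it splits the derivative, controls the piece where the derivative lands on the cutoff by interpolating low-regularity localized norms of $e^{it\Delta}R_n^J$ against Strichartz bounds so that a positive power of $\norm{e^{it\Delta}R_n^J}_{S^{\text{weak}}}$ appears, and controls the piece where the derivative lands on $e^{it\Delta}R_n^J$ by an $\varepsilon$-splitting of the form $\norm{e^{it\Delta}|x|^{1/2}h}_{L^2_{t,x}(B_R)}\le\varepsilon\norm{h}_{\mathcal{F}\dot{H}^{1/2}}+C_\varepsilon\norm{e^{it\Delta}h}_{L_t^{3/2,\infty}L_x^{9/2}}$, again reducing to \eqref{030} (using also the Galilean invariance of the $S^{\text{weak}}$ norm). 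To repair your proof you would need to replace the compactness claim by an argument of this type, i.e., route the mixed terms through the vanishing of $\norm{e^{it\Delta}R_n^J}_{L_t^{q,\infty}L_x^r}$ as $J\to\infty$.
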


These are shown as in \cite{Mas16}.
Using Lemma \ref{Lemma1 for main result} with $\varepsilon=1$, it follows that there exists $J_0$ such that
\begin{align}
	&\|(\widetilde{u}_n^J,\widetilde{v}_n^J)\|_{W_1([0,\infty))\times W_2([0,\infty))} \notag \\
	&\hspace{0cm}\leq\|(\widetilde{w}_n^{J_0},\widetilde{z}_n^{J_0})\|_{W_1([0,\infty))\times W_2([0,\infty))}+\|(\widetilde{w}_n^J,\widetilde{z}_n^J)-(\widetilde{w}_n^{J_0},\widetilde{z}_n^{J_0})\|_{W_1([0,\infty))\times W_2([0,\infty))}+\|e^{it\Delta}R_n^J\|_{W_1([0,\infty))} \notag \\
	&\hspace{0cm}\leq\sum_{j=1}^{J_0}\left\|\left(\Phi_j,\Psi_j\right)\right\|_{W_1([0,\infty))\times W_2([0,\infty))}+c\|R_n^J\|_{\mathcal{F}\dot{H}^\frac{1}{2}}+1 \notag \\
	&\hspace{0cm}\leq\sum_{j=1}^{J_0}\left\|\left(\Phi_j,\Psi_j\right)\right\|_{W_1([0,\infty))\times W_2([0,\infty))}+c\ell_{v_0}^\dagger+1=:M \label{032}
\end{align}
for any $J\geq J_0$ and $n\geq1$.
Let $\varepsilon_1$ be given in Proposition \ref{Long time perturbation}.
Then,
\begin{align}
	&\left\|\left(u_{0,n}-\widetilde{u}_n^J(0),v_0-\widetilde{v}_n^J(0)\right)\right\|_{\mathcal{F}\dot{H}^\frac{1}{2}\times \mathcal{F}\dot{H}^\frac{1}{2}}
	=0. \label{033}
\end{align}
Lemma \ref{Lemma2 for main result} implies that there exists $J_1$ such that
\begin{align*}
\limsup_{n\rightarrow\infty}\left\|\left(\widetilde{e}_{1,n}^J,\widetilde{e}_{2,n}^J\right)\right\|_{N_1([0,\infty))\times N_2([0,\infty))}\leq\frac{\varepsilon_1}{4}.
\end{align*}
for any $J\geq J_1$.
Choose $J$ with $J\geq\max\{J_0,J_1\}$.
There exists $n_0$ such that
\begin{align}
\left\|\left(\widetilde{e}_{1,n}^J,\widetilde{e}_{2,n}^J\right)\right\|_{N_1([0,\infty))\times N_2([0,\infty))}\leq\frac{\varepsilon_1}{2} \label{034}
\end{align}
for any $n\geq n_0$.
By \eqref{032}, \eqref{033}, \eqref{034}, and Proposition \ref{Long time perturbation}, we deduce that a solution $(u_n,v_n)$ to \eqref{NLS} with a initial data $(u_{0,n},v_0)$ satisfies
\begin{align*}
\|(u_n,v_n)\|_{W_1([0,\infty))\times W_2([0,\infty))}
	\leq C(M,\varepsilon_1)
	<\infty
\end{align*}
for any $n\geq n_0$.
However, this contradicts with the definition of $(u_n,v_n)$.
Therefore, there exists $j_1\ge1$ such that
\begin{align*}
\|(\Phi_{j_1},\Psi_{j_1})\|_{W_1([0,T_{\max}))\times W_2([0,T_{\max}))}=\infty.
\end{align*}
By \eqref{031}, another characterization of $\ell_{v_0}^\dagger$ (Proposition \ref{Another characterization}), and Proposition \ref{P:l0lv0}, we have $\|\phi^{j_1}\|_{\mathcal{F}\dot{H}^\frac{1}{2}}=\ell_{v_0}^\dagger$ and $\phi^{j}=0$ for all $j\neq j_1$.
We encounter a dichotomy, $j_1=1$ or $j_1=2$.

Now, we suppose that $j_1=1$. 
Since a solution $(\Phi_1,\Psi_1)$ to \eqref{NLS} with a initial data $(\phi^1,v_0)$ does not scatter, we have $\ell_{v_0}\leq\|\phi^1\|_{\mathcal{F}\dot{H}^\frac{1}{2}}=\ell_{v_0}^\dagger$ by the definition of $\ell_{v_0}$.
Combining this inequality and Lemma \ref{Comparison of ell and ell^dagger}, we obtain $\ell_{v_0}=\ell_{v_0}^\dagger=\|\phi^1\|_{\mathcal{F}\dot{H}^\frac{1}{2}}$.
This shows that $\phi^1$ is a minimizer to $\ell_{v_0}$.
Moreover, it follows from Lemmas \ref{P:l0lv0} and Lemma \ref{Comparison of ell and ell^dagger} that $\ell_{v_0}=\ell_{v_0}^\dagger\leq\ell_0^\dagger\leq\ell_0$.
Therefore, we have the identity $\ell_{v_0}^\dagger=\min\{\ell_0,\ell_{v_0}\}$.

Let us move on to the case $j_1=2$. 
In this case, it follows that $(\phi^1,\psi^1)=(0,v_0)$ and $(\phi^{2},\psi^{2})=(\phi^{2},0)$.
Since $(\Phi_{2},\Psi_{2})$ does not scatter, we have $\ell_0\leq\|\phi^{2}\|_{\mathcal{F}\dot{H}^\frac{1}{2}}=\ell_{v_0}^\dagger$ by the definition of $\ell_0$.
Using Lemma \ref{P:l0lv0} and Lemma \ref{Comparison of ell and ell^dagger}, we obtain
\begin{align*}
\ell_0\leq\|\phi^{2}\|_{\mathcal{F}\dot{H}^\frac{1}{2}}=\ell_{v_0}^\dagger\leq\ell_0^\dagger\leq\ell_0.
\end{align*}
In particular, we have $\ell_{v_0}^\dagger=\ell_0=\|\phi^{2}\|_{\mathcal{F}\dot{H}^\frac{1}{2}}$.
This shows that $\phi^2$ is a minimizer to $\ell_{0}$.
In addition, we have
\begin{gather*}
(u_{0,n},v_0)
	=\sum_{j=1,2}\mathcal{G}_n^j(\phi^j,\psi^j)+(R_n^2,0)
	=(0,v_0)+\mathcal{G}_n^{2}(\phi^{2},0)+(R_n^2,0),\\
\lim_{n\rightarrow\infty}\|u_{0,n}\|_{\mathcal{F}\dot{H}^\frac{1}{2}}
	=\ell_{v_0}^\dagger,\ \ \text{ and }\ \ 
\lim_{n\rightarrow\infty}\|R_n^2\|_{\mathcal{F}\dot{H}^\frac{1}{2}}=0
\end{gather*}
by \eqref{023}, \eqref{026}, \eqref{027}, and $\|\phi^{j_0}\|_{\mathcal{F}\dot{H}^\frac{1}{2}}=\ell_{v_0}^\dagger$.
Remark that we have the identity $\ell_{v_0}^\dagger=\min\{\ell_0,\ell_{v_0}\}$ also in this case.

In both cases, we have the identity $\ell_{v_0}^\dagger=\min\{\ell_0,\ell_{v_0}\}$, hence we have Theorem \ref{T:l0}.
If we assume that $\ell_0>\ell_{v_0}^\dagger$ then the second case is precluded. 
This is nothing but Theorem \ref{T:case1}.

Similarly, the assumption $\ell_{v_0}>\ell_{v_0}^\dagger$ precludes the case $j_1=1$.
This shows Theorem \ref{T:case2}.
Indeed, the above argument applies to the minimizing sequence satisfying the assumption of Theorem \ref{T:case2}
and leads us to the same conclusion in the case $j_1=2$.
Let $T_{\max}$ denote the maximal existence time of a solution to \eqref{NLS} with a initial data $(\phi^{2},0)$.
Fix $0\leq \tau <T_{\max}$.
Recall that $(\Phi_j,\Psi_j)$ denotes the solution to \eqref{NLS} with a initial data $(\phi^j,\psi^j)$, and $\Bigl((\Phi_j)_{[h_n^j,\xi_n^j]},(\Psi_j)_{[h_n^j,\xi_n^j]}\Bigr)$ does the solution to \eqref{NLS} with a initial data $\mathcal{G}_n^j(\phi^j,\psi^j)$.
We set
\begin{align*}
	(\widetilde{u}_n,\widetilde{v}_n)
		:=\sum_{j=1,2}\left((\Phi_j)_{[h_n^j,\xi_n^j]},(\Psi_j)_{[h_n^j,\xi_n^j]}\right)
		=(0,e^{\frac{1}{2}it\Delta}v_0)+\left((\Phi_{2})_{[h_n^{2},\xi_n^{2}]},(\Psi_{2})_{[h_n^{2},\xi_n^{2}]}\right).
\end{align*}
Then, $(\widetilde{u}_n,\widetilde{v}_n)$ solves
\begin{align*}
	i\partial_t\widetilde{u}_n+\Delta\widetilde{u}_n
		&=\sum_{j=1,2}\left(i\partial_t(\Phi_j)_{[h_n^j,\xi_n^j]}+\Delta(\Phi_j)_{[h_n^j,\xi_n^j]}\right)
		=-2(\Psi_{2})_{[h_n^{2},\xi_n^{2}]}\overline{(\Phi_{2})_{[h_n^{2},\xi_n^{2}]}},\\
	i\partial_t\widetilde{v}_n+\frac{1}{2}\Delta\widetilde{v}_n
		&=\sum_{j=1,2}\left(i\partial_t(\Psi_j)_{[h_n^j,\xi_n^j]}+\frac{1}{2}\Delta(\Psi_j)_{[h_n^j,\xi_n^j]}\right)
		=-(\Phi_2)_{[h_n^2,\xi_n^2]}^2.
\end{align*}
We also set
\begin{gather*}
	\widetilde{e}_{1,n}
		:=i\partial_t\widetilde{u}_n+\Delta\widetilde{u}_n+2\widetilde{v}_n\overline{\widetilde{u}_n}
		=2(\Psi_{1})_{[h_n^1,\xi_n^1]}\overline{(\Phi_{2})_{[h_n^{2},\xi_n^{2}]}},\\
	\widetilde{e}_{2,n}
		:=i\partial_t\widetilde{v}_n+\frac{1}{2}\Delta\widetilde{v}_n+(\widetilde{u}_n)^2
		=0.
\end{gather*}
We check the assumptions of Proposition \ref{Long time perturbation}. One has
\begin{align*}
	&\|(\widetilde{u}_n,\widetilde{v}_n)\|_{W_1([0,\tau/(h_n^{2})^2))\times W_2([0,\tau/(h_n^{2})^2))}\\
		& \qquad\leq \|(0,e^{\frac{1}{2}it\Delta}v_0)\|_{W_1([0,\infty))\times W_2([0,\infty))}+\|(\Phi_{j_0},\Psi_{j_0})\|_{W_1([0,\tau))\times W_2([0,\tau))} =:M<\infty,
\end{align*}
\begin{align*}
	\|(u_{0,n},v_0)-(\widetilde{u}_n(0),\widetilde{v}_n(0))\|_{\mathcal{F}\dot{H}^\frac{1}{2}\times \mathcal{F}\dot{H}^\frac{1}{2}}
		=\|(R_n^2,0)\|_{\mathcal{F}\dot{H}^\frac{1}{2}\times \mathcal{F}\dot{H}^\frac{1}{2}}
		\longrightarrow0\ \ \text{ as }\ \ n\rightarrow\infty,
\end{align*}
and
\begin{align*}
	\|(\widetilde{e}_{1,n},\widetilde{e}_{2,n})\|_{N_1([0,\tau/(h_n^{2})^2))\times N_2([0,\tau/(h_n^{2})^2))}
		= \|\widetilde{e}_{1,n}\|_{N_1([0,\tau/(h_n^{2})^2))}
		\longrightarrow0\ \ \text{ as }\ \ n\rightarrow\infty,
\end{align*}
where the last estimate is shown as in the same spirit of Lemma \ref{Lemma2 for main result} with a help of the first estimate.
Therefore, we obtain
\begin{align*}
	\left\|(u_n,v_n)-(0,e^{\frac{1}{2}it\Delta}v_0)-\Bigl((\Phi_{2})_{[h_n^{2},\xi_n^{2}]},(\Psi_{2})_{[h_n^{2},\xi_n^{2}]}\Bigr)\right\|_{L_t^\infty([0,\tau/(h_n^{2})^2);\dot{X}_{1/2}^{1/2})\times L_t^\infty([0,\tau/(h_n^{2})^2);\dot{X}_1^{1/2})}
		\longrightarrow0
\end{align*}
as $n\rightarrow\infty$.
\end{proof}

We next consider the optimizing problem $\ell_f$ defined in \eqref{E:definition of lf}.

\begin{theorem}\label{T:ellfminimizer}
Let $f(x,y)$ be a function on $[0,\infty)\times [0,\infty)$ satisfying the following three conditions:
\begin{itemize}
\item Strictly increasing with respect to the both variables, i.e.,
\[
	0\le x_1 \le x_2 ,\, 0\le y_1 \le y_2 \Longrightarrow f(x_1,y_1) \le f(x_2,y_2) 
\]
and the equality holds only if $x_1=x_2$ and $y_1=y_2$.
\item Continuous, i.e., for any $(x_0,y_0)\in [0,\infty)\times [0,\infty)$,
\[
	\lim_{[0,\infty)\times [0,\infty) \ni (x,y) \to (x_0,y_0) } f(x, y) = f(x_0,y_0).
\]
\item $f(0,0)=0$.
\end{itemize} 
Let $\ell_f$ be defined in \eqref{E:definition of lf}.
Define
\[
	\widetilde{\ell}_f := \inf_{v_0 \in \mathcal{F} \dot{H}^{\frac12}} f(\ell_{v_0}, \norm{v_0}_{\mathcal{F} \dot{H}^{\frac12}}).
\]
Then, it follows that
\[
	\ell_f= \widetilde{\ell}_f= \inf_{v_0 \in \mathcal{F} \dot{H}^{\frac12}} f(\ell_{v_0}^\dagger, \norm{v_0}_{\mathcal{F} \dot{H}^{\frac12}}).
\]
Furthermore, 
 there exists a minimizer $(u^{(f)}(t),v^{(f)}(t))$ to $\ell_{f}$ such that
\begin{enumerate}
\item $f(\norm{u^{(f)}(0)}_{\mathcal{F} \dot{H}^{\frac12}}, \norm{v^{(f)}(0)}_{\mathcal{F} \dot{H}^{\frac12}}) = \ell_{f}$;
\item $(u^{(f)}(t),v^{(f)}(t))$ does not scatter forward in time;
\item $\norm{u^{(f)}(0)}_{\mathcal{F} \dot{H}^{\frac12}}=\ell_{v^{(f)}(0)}$.
\end{enumerate}
The minimizer is not a ground state.
\end{theorem}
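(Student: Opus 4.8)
\emph{Step 1: the three identities.} The plan is to first unwind $\ell_f$. For fixed $v_0$, since $f$ is continuous and strictly increasing in its first variable, infimizing only over $u_0$ gives
$\inf\{f(\|u_0\|_{\mathcal{F}\dot H^{1/2}},\|v_0\|_{\mathcal{F}\dot H^{1/2}}):(u_0,v_0)\notin\mathcal{S}_+\}=f(\ell_{v_0},\|v_0\|_{\mathcal{F}\dot H^{1/2}})$ (read as $+\infty$ when $\ell_{v_0}=\infty$), so taking the infimum over $v_0$ yields $\ell_f=\widetilde{\ell}_f$. Since $\ell_{v_0}^\dagger\le\ell_{v_0}$ (Lemma \ref{Comparison of ell and ell^dagger}) and $f$ is monotone, $\inf_{v_0}f(\ell_{v_0}^\dagger,\|v_0\|_{\mathcal{F}\dot H^{1/2}})\le\widetilde{\ell}_f$. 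For the reverse inequality I fix $v_0$: if $\ell_{v_0}^\dagger=\ell_{v_0}$ there is nothing to do, while if $\ell_{v_0}^\dagger<\ell_{v_0}$ then Theorem \ref{T:l0} forces $\ell_{v_0}^\dagger=\ell_0<\infty$, Theorem \ref{T:case0} supplies a non-scattering datum $(u^{(0)}(0),0)$ with $\|u^{(0)}(0)\|_{\mathcal{F}\dot H^{1/2}}=\ell_0$, and hence $\ell_f\le f(\ell_0,0)\le f(\ell_{v_0}^\dagger,\|v_0\|_{\mathcal{F}\dot H^{1/2}})$; infimizing over $v_0$ closes the loop.

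\emph{Step 2: extracting a non-scattering profile.} Pick $v_{0,n}$ with $f(\ell_{v_{0,n}},\|v_{0,n}\|_{\mathcal{F}\dot H^{1/2}})\to\widetilde{\ell}_f=\ell_f$ and, for each $n$, a datum $u_{0,n}$ with $(u_{0,n},v_{0,n})\notin\mathcal{S}_+$ and $\|u_{0,n}\|_{\mathcal{F}\dot H^{1/2}}<\ell_{v_{0,n}}+\tfrac1n$; then $f(\|u_{0,n}\|_{\mathcal{F}\dot H^{1/2}},\|v_{0,n}\|_{\mathcal{F}\dot H^{1/2}})\to\ell_f$ by continuity of $f$. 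Using $\ell_{v_0}^\dagger\le\ell_0^\dagger=\ell_0$ (Theorem \ref{T:l0}) — together with the explicit $\ell_0$-minimizer of Theorem \ref{T:case0} in the degenerate case in which the sequence cannot be kept bounded — reduce to the situation where $\{(u_{0,n},v_{0,n})\}$ is bounded in $\mathcal{F}\dot H^{1/2}\times\mathcal{F}\dot H^{1/2}$. Apply the linear profile decomposition (Theorem \ref{Linear profile decomposition}) and let $(\Phi_j,\Psi_j)$ be the nonlinear profiles, i.e.\ the solutions of \eqref{NLS} with data $(\phi^j,\psi^j)$. Were all of them scattering, the superposition-plus-stability argument of the proof of Theorems \ref{T:l0}--\ref{T:case2} (Lemmas \ref{Lemma1 for main result} and \ref{Lemma2 for main result} and Proposition \ref{Long time perturbation}) would make $\|(u_n,v_n)\|_{W_1([0,\infty))\times W_2([0,\infty))}$ uniformly bounded for the solutions $(u_n,v_n)$ with data $(u_{0,n},v_{0,n})$, contradicting $(u_{0,n},v_{0,n})\notin\mathcal{S}_+$ via Proposition \ref{Scattering criterion}. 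Hence some $(\Phi_{j_1},\Psi_{j_1})$ does not scatter; then $\phi^{j_1}\neq0$ (otherwise the datum $(0,\psi^{j_1})$ would give the scattering solution $(0,e^{\frac12it\Delta}\psi^{j_1})$), and by definition $\ell_{\psi^{j_1}}\le\|\phi^{j_1}\|_{\mathcal{F}\dot H^{1/2}}$.

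\emph{Step 3: collapse and identification of the minimizer.} Passing to a subsequence along which $\|u_{0,n}\|_{\mathcal{F}\dot H^{1/2}}\to a$ and $\|v_{0,n}\|_{\mathcal{F}\dot H^{1/2}}\to b$, the Pythagorean expansions of Theorem \ref{Linear profile decomposition} give $\sum_j\|\phi^j\|_{\mathcal{F}\dot H^{1/2}}^2\le a^2$, $\sum_j\|\psi^j\|_{\mathcal{F}\dot H^{1/2}}^2\le b^2$ and $f(a,b)=\ell_f$, whence
\begin{multline*}
\ell_f=\widetilde{\ell}_f\le f(\ell_{\psi^{j_1}},\|\psi^{j_1}\|_{\mathcal{F}\dot H^{1/2}})\le f(\|\phi^{j_1}\|_{\mathcal{F}\dot H^{1/2}},\|\psi^{j_1}\|_{\mathcal{F}\dot H^{1/2}})\\
\le f\bigl(({\textstyle\sum_j\|\phi^j\|_{\mathcal{F}\dot H^{1/2}}^2})^{1/2},({\textstyle\sum_j\|\psi^j\|_{\mathcal{F}\dot H^{1/2}}^2})^{1/2}\bigr)\le f(a,b)=\ell_f .
\end{multline*}
Thus every inequality is an equality, and strict monotonicity of $f$ forces $\|\phi^{j_1}\|_{\mathcal{F}\dot H^{1/2}}=\ell_{\psi^{j_1}}$ and $\phi^j=\psi^j=0$ for $j\neq j_1$. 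Then $(u^{(f)},v^{(f)}):=(\Phi_{j_1},\Psi_{j_1})$ is a minimizer: $f(\|u^{(f)}(0)\|_{\mathcal{F}\dot H^{1/2}},\|v^{(f)}(0)\|_{\mathcal{F}\dot H^{1/2}})=\ell_f$, it does not scatter forward in time, and $\|u^{(f)}(0)\|_{\mathcal{F}\dot H^{1/2}}=\ell_{\psi^{j_1}}=\ell_{v^{(f)}(0)}$. Moreover $\ell_{v^{(f)}(0)}^\dagger=\ell_{v^{(f)}(0)}$: otherwise Theorem \ref{T:l0} gives $\ell_{v^{(f)}(0)}^\dagger=\ell_0<\ell_{v^{(f)}(0)}$, and the $\ell_0$-minimizer of Theorem \ref{T:case0} yields $\ell_f\le f(\ell_0,0)<f(\ell_{v^{(f)}(0)},\|v^{(f)}(0)\|_{\mathcal{F}\dot H^{1/2}})=\ell_f$, impossible. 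Finally, this minimizer cannot be a ground state, since for a ground state $\ell_{Q_{2,\omega}}<\|Q_{1,\omega}\|_{\mathcal{F}\dot H^{1/2}}$ (recorded in the Introduction, via orbital stability, or via Theorem \ref{Nonpositive energy}), which violates property (3); the same strict inequality persists under the symmetries of \eqref{NLS} entering the profile decomposition (scaling, Galilean boost, phase), all of which leave $\|\cdot\|_{\mathcal{F}\dot H^{1/2}}$ and $\ell_\cdot$ invariant.

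\emph{Main obstacle.} The crux is Step 2: establishing that a non-scattering nonlinear profile must appear (the perturbative/stability argument, parallel to the proof of Theorems \ref{T:l0}--\ref{T:case2} but now with $v_{0,n}$ varying and both profile remainders present), and the preliminary reduction to a bounded minimizing sequence when $f$ is not coercive, which relies on $\ell_{v_0}^\dagger\le\ell_0$ and the explicit minimizer of Theorem \ref{T:case0}. Once a non-scattering profile is in hand, the collapse to a single profile and the verification of the three properties follow at once from the strict monotonicity of $f$.
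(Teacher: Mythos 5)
Your argument is correct in substance and extracts the minimizer by the same mechanism as the paper: linear profile decomposition applied to a minimizing sequence, a stability/long-time-perturbation contradiction to produce a non-scattering nonlinear profile, and the Pythagorean expansion combined with strict monotonicity of $f$ to collapse everything to a single profile and read off properties (1)--(3). Where you genuinely differ is in how the three identities are established. The paper introduces an auxiliary threshold $\ell_f^\dagger$ through the extended function $L_f(\ell)$, re-runs the continuity and Dedekind-cut arguments of Section \ref{Properties of L and l} for $L_f$, proves $\ell_f^\dagger=\ell_f$ via the compactness step, and finally gets $\ell_f^\dagger\le\inf_{v_0}f(\ell_{v_0}^\dagger,\norm{v_0}_{\mathcal{F}\dot{H}^{\frac12}})$ from the blow-up relation $L_{v_0}(\ell_{v_0}^\dagger)=\infty$ in \eqref{E:Lvellv}. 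You instead obtain $\ell_f=\widetilde{\ell}_f$ directly by evaluating the iterated infimum (the inner infimum over $u_0$ equals $f(\ell_{v_0},\norm{v_0}_{\mathcal{F}\dot{H}^{\frac12}})$ by continuity and monotonicity of $f(\cdot,y)$), and you close the third identity in the case $\ell_{v_0}^\dagger<\ell_{v_0}$ by combining Theorem \ref{T:l0} (which forces $\ell_{v_0}^\dagger=\ell_0<\infty$) with the explicit $\ell_0$-minimizer of Theorem \ref{T:case0}. This is more elementary and avoids $L_f$ and $\ell_f^\dagger$ altogether; the trade-off is that your identities lean on Theorems \ref{T:l0} and \ref{T:case0} as black boxes, while the paper's route is self-contained modulo the profile machinery and produces $\ell_f^\dagger$ as a by-product. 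Both are valid, and your verification that the minimizer satisfies $\ell_{v^{(f)}(0)}^\dagger=\ell_{v^{(f)}(0)}$ and is not a ground state is a legitimate supplement (the paper relegates these points to the Introduction).

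One soft spot you correctly sense, and which you share with the paper rather than introduce: the reduction to a bounded minimizing sequence. The paper asserts that the sublevel set $\{f(x,y)\le \ell_f^\dagger+1\}$ is compact, which fails for bounded $f$ (e.g.\ $f(x,y)=\frac{x}{1+x}+\frac{y}{1+y}$ satisfies all three hypotheses). Your fix via the $\ell_0$-minimizer does handle the case where $\norm{u_{0,n}}_{\mathcal{F}\dot{H}^{\frac12}}\to\infty$ and $\ell_0<\infty$ (then $\ell_f\ge\sup_xf(x,0)>f(\ell_0,0)\ge\ell_f$, a contradiction), but it does not visibly close the case $\ell_0=\infty$ or the case where only the $v$-components blow up. Since the paper's own proof has the same gap, treat this as a place to add a coercivity hypothesis or a separate remark, not as a defect peculiar to your write-up.
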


\begin{proof}
Let us first show $\ell_f \ge \widetilde{\ell}_f$.
By definition of $\ell_{v_0}$ and the fact that $f$ is increasing in $x$, one sees that
the inequality
\[
	f(\ell_{v_0}, \norm{v_0}_{\mathcal{F} \dot{H}^{\frac12}})  \le f(\norm{u_0}_{\mathcal{F} \dot{H}^{\frac12}}, \norm{v_0}_{\mathcal{F} \dot{H}^{\frac12}}) 
\]
is true for any $(u_0,v_0) \notin \mathcal{S}_+$. Taking the infimum over $(u_0,v_0) \notin \mathcal{S}_+$, we obtain 
$\widetilde{\ell}_{f} \le \ell_f$.

Let us introduce $\ell^\dagger_f$ as follows:
\begin{equation*}
L_{f}(\ell):=\sup\left\{\|(u,v)\|_{W_1([0,T_{\max}))\times W_2([0,T_{\max}))} :
\begin{array}{l}
(u,v)\text{ is the solution to \eqref{NLS} on }[0,T_{\max}),\\[0.1cm]
f( \|u(0)\|_{\mathcal{F}\dot{H}^\frac{1}{2}}, \|v(0)\|_{\mathcal{F}\dot{H}^\frac{1}{2}}) \leq\ell
\end{array}
\right\},
\end{equation*}
and
\begin{equation}\label{D:ellfd}
\ell_{f}^\dagger:=\sup\{\ell:L_{f}(\ell)<\infty\} \in (0,\infty].
\end{equation}
Our next goal is to show $\ell_f=\ell_f^\dagger$.
Since $f(0,0)=0$ and $f$ is increasing with respect to the both variables, we see that 
\[
	\{(u(0),v(0))\in \mathcal{F}\dot{H}^\frac{1}{2} \times \mathcal{F}\dot{H}^\frac{1}{2} : f(\|u(0)\|_{\mathcal{F}\dot{H}^\frac{1}{2}},\|v(0)\|_{\mathcal{F}\dot{H}^\frac{1}{2}}) \le \ell \}	
\]
is a small neighborhood of $(0,0)$ for small $\ell>0$. 
By the small data theory, we have $L_f(\ell)\lesssim_f 1$ for small $\ell>0$, showing that $\ell_f^\dagger>0$.
Mimicking the argument in Proposition \ref{Continuity of L}, we see that $L_f(\ell)$ is a non-decreasing continuous extended function 
defined on $[0,\infty)$, thanks to the strictly increasing property of $f$ in both valuables.
We also have
\[
	\ell_{f}^\dagger=\inf\{\ell:L_{f}(\ell)=\infty\}
\]
and $\ell_{f}^\dagger \le \ell_f$ as in the proof of Propositions \ref{Another characterization} and Lemma \ref{Comparison of ell and ell^dagger}.

We now show the other direction $\ell_{f}^\dagger \ge \ell_f$.
Take an optimizing sequence for $\ell_f^\dagger$.
Then, by a similar argument to the proof of Theorem \ref{T:l0}, we obtain a minimizer $(u_c(t),v_c(t))$ to $\ell_f^\dagger$,
which completes the proof of $\ell_f = \ell_f^\dagger$.
We omit the details of the proof but point out several different respects compared with an optimizing sequence for $\ell_{v_0}^\dagger$.
First of all, the second component $v_{0,n}$ of the optimizing sequence may vary in $n$.
As a result, we do not have a priori information about the second component in the profile decompositions, 
hence the decomposition takes the form
\begin{align*}
(u_{0,n},v_{0,n})
	=\sum_{j=1}^J\mathcal{G}_n^j(\phi^j,\psi^j)+(R_n^J,L_n^J).
\end{align*}
Here we remark that since $(\norm{u_{0,n}}_{\mathcal{F} \dot{H}^\frac12},\norm{v_{0,n}}_{\mathcal{F} \dot{H}^\frac12})$ belongs
to a compact set, say $\{ f(x,y) \le \ell_f^\dagger +1 \}$, one may suppose that it converges to a point $(x_\infty,y_\infty)$ such that
$f(x_\infty,y_\infty)=\ell_f^\dagger$ along a subsequence.

A contradiction argument shows there exists at most one $j$ such that $(\phi^j,\psi^j) \notin S_+$.
We may let $j=1$.
Then, the second difference is that we are also able to show that the number of nonzero profile is at most one.
This is because if $(\phi^2,\psi^2)\neq(0,0)$ then we have
\[
	\norm{\phi^1}_{\mathcal{F} \dot{H}^\frac12}^2 + \norm{\phi^2}_{\mathcal{F} \dot{H}^\frac12}^2 \le \lim_{n\to\infty} \norm{u_{0,n}}_{\mathcal{F} \dot{H}^\frac12}^2 = x_\infty^2
\]
and
\[
	\norm{\psi^1}_{\mathcal{F} \dot{H}^\frac12}^2 + \norm{\psi^2}_{\mathcal{F} \dot{H}^\frac12}^2 \le \lim_{n\to\infty} \norm{v_{0,n}}_{\mathcal{F} \dot{H}^\frac12}^2 = y_\infty^2
\]
by the Pythagorean decomposition. This shows $\norm{\phi^1}_{\mathcal{F} \dot{H}^\frac12}\le x_\infty$ and
$\norm{\psi^1}_{\mathcal{F} \dot{H}^\frac12}\le y_\infty$.
Since one of the above equality fails when $(\phi^2,\psi^2)\neq(0,0)$, the strictly increasing property of $f$ shows
\[
	f(\norm{\phi^1}_{\mathcal{F} \dot{H}^\frac12},\norm{\psi^1}_{\mathcal{F} \dot{H}^\frac12})
	< f(x_\infty,y_\infty) = \ell_f^\dagger.
\]
However, as $(\phi^1,\psi^1) \notin S_+$, the left hand side is not less than $\ell_f$.
Thus, we obtain $\ell_f < \ell_f^\dagger$, a contradiction.
We conclude that $\ell_f^\dagger = \ell_f$ and a solution $(u^{(f)}(t),v^{(f)}(t))$
which satisfies the initial condition $(u^{(f)}(0),v^{(f)}(0))=(\phi^1,\psi^1)$
is a desired minimizer to $\ell_f$.

To complete the proof, it suffices to show that
\[
	\ell_f^\dagger \le \inf_{v_0 \in \mathcal{F} \dot{H}^{\frac12}} f(\ell_{v_0}^\dagger, \norm{v_0}_{\mathcal{F} \dot{H}^{\frac12}})
\]
because we have already shown $\ell_f^\dagger=\ell_f\ge \widetilde{\ell}_f$ and because $\widetilde{\ell}_f$ is obviously greater than 
or equal to the right hand side in view of Theorem \ref{T:l0} and the increasing property of $f$.
Fix $v_0 \in \mathcal{F} \dot{H}^\frac12$.
By \eqref{E:Lvellv}, we can pick a sequence $\{u_{0,n}\}_{n} \subset \mathcal{F} \dot{H}^\frac12$ so that $\norm{u_{0,n}}_{\mathcal{F} \dot{H}^\frac12} \le \ell_{v_0}^\dagger$ and
the corresponding solution $(u_n(t),v_n(t))$ with the data $(u_n(0),v_n(0))=(u_{0,n},v_0)$ satisfies
\[
	\norm{(u_n,v_n)}_{W_1([0,T_{\max}))\times W_1([0,T_{\max}))} \ge n .
\]
Further, it follows from the increasing property of $f$ that
\[
	f(\norm{u_{0,n}}_{\mathcal{F} \dot{H}^\frac12}, \norm{v_0}_{\mathcal{F} \dot{H}^{\frac12}}) \le f(\ell_{v_0}^\dagger, \norm{v_0}_{\mathcal{F} \dot{H}^{\frac12}})
\]
for all $n\ge0$. The existence of the above $\{u_{0,n}\}_{n}$ implies that
\[
	L_f( f(\ell_{v_0}^\dagger, \norm{v_0}_{\mathcal{F} \dot{H}^{\frac12}}) )=\infty.
\]
The other characterization of $\ell_f^\dagger$ then gives us
\[
	\ell_f^\dagger \le f(\ell_{v_0}^\dagger, \norm{v_0}_{\mathcal{F} \dot{H}^{\frac12}}).
\]
Since $v_0$ is arbitrary, we obtain the result.
\end{proof}

\section{Proof of corollaries of Theorem \ref{Nonpositive energy}}\label{Proof of corollaries}
We have proven Theorem \ref{Nonpositive energy} in Subsection \ref{subsec:Nonpositive energy}.
Let us show its corollaries.

\begin{proof}[Proof of Corollary \ref{Large data nonscattering}]
For given $v_0 \in \mathcal{F} \dot{H}^{\frac12} \cap H^1$ with $v_0\neq0$, we take
\[
	u_0 =  v_0(x)^{\frac12} |v_0 (x)|^{\frac12} \in \mathcal{F} \dot{H}^{\frac12} \cap H^1.
\]
Then,  we have
\[
	E[ c^{\frac12} d u_0,c v_0] \le c d^2 \norm{\nabla v_0}_{L^2}^2 + \frac{c^2}2 \norm{\nabla v_0}_{L^2}^2
	-2 c^{2}d^2\norm{v_0}_{L^3}^3
\]
for $c>0$ and 
$d=\norm{\nabla v_0}_{L^2} \norm{v_0}_{L^3}^{-3/2}$.
 There exists $c_0=c_0(v_0)>0$ such that the right side is negative for any $c\ge c_0$.
For such $c$, the corresponding solution does not scatter by virtue of Theorem \ref{Nonpositive energy}. This also shows the bound
\[
	\ell_{c v_0} \le \norm{c^{\frac12}d u_0}_{\mathcal{F} \dot{H}^{\frac12}} =
	c^{\frac12} \norm{v_0}_{\mathcal{F} \dot{H}^{\frac12}} \norm{\nabla v_0}_{L^2} \norm{v_0}_{L^3}^{-3/2}.
\]
We have the desired result.
\end{proof}

\begin{proof}[Proof of Corollary \ref{Sufficient condition of finite ell}]
We have
\begin{align*}
-\Delta\varphi-2(\Re e^{i\theta} v_0)\varphi= \tilde{e}\varphi.
\end{align*}
Remark that $\varphi$ is real-valued.
Multiplying this identity by $\varphi$, and integrating, we have
\begin{align*}
(-\Delta\varphi,\varphi)_{L^2}-(2(\Re e^{i\theta} v_0)\varphi,\varphi)_{L^2}=\tilde{e}(\varphi,\varphi)_{L^2}.
\end{align*}
This can be rearranged as
\begin{align*}
\|\nabla\varphi\|_{L^2}^2 + 2\Re \int (-e^{-i\theta} \varphi^2) \overline{v_0} dx =\tilde{e} \|\varphi\|_{L^2}^2.
\end{align*}
Here, we take $u_0= e^{-i\theta/2} \varphi$. Then, 
\begin{align*}
E[cu_0,v_0]=c^2\|\nabla\varphi\|_{L^2}^2+\frac{1}{2}\|\nabla v_0\|_{L^2}^2- 2\Re \int (c^2e^{-i\theta} \varphi^2) \overline{v_0} dx
=c^2\tilde{e} \|\varphi\|_{L^2}^2+\frac{1}{2}\|\nabla v_0\|_{L^2}^2.
\end{align*}
From $\tilde{e}<0$, the choice $c^2=\frac{\|\nabla v_0\|_{L^2}^2}{2|\tilde{e}|\|\varphi\|_{L^2}^2}$ gives us $E(u_0,v_0)=0$. Therefore, $(cu_0,v_0)\notin S_+$ by  Theorem \ref{Nonpositive energy}. This also implies the bound
\begin{align*}
\ell_{v_0}\leq \|cu_0\|_{\mathcal{F}\dot{H}^\frac{1}{2}}= \frac{\|\varphi\|_{\mathcal{F}\dot{H}^\frac{1}{2}}}{\sqrt{2|\tilde{e}|}\|\varphi\|_{L^2}}\|\nabla v_0\|_{L^2}.
\end{align*}
We complete the proof.
\end{proof}

\subsection*{Acknowledgments}
M.H. was supported by JSPS KAKENHI Grant Number JP19J13300.
S.M. was supported by JSPS KAKENHI Grant Numbers JP17K14219, JP17H02854, JP17H02851, and JP18KK0386.

\end{document}